\theoremstyle{plain}
\newtheorem{thm}{Theorem}%定理等の最初の数字もequationと同じものにする
\newtheorem{prop}[thm]{Proposition}
\newtheorem{lem}[thm]{Lemma}
\newtheorem{cor}[thm]{Corollary}
\numberwithin{thm}{section}
\theoremstyle{definition}
\newtheorem{rem}[equation]{Remark}
\newtheorem*{ac}{Acknowledgements}%「*」は定理番号を付けない
\DeclareMathOperator{\rank}{rank}
\DeclareMathOperator{\corank}{corank}
\newcommand{\bbC}{\mathbb C}
\newcommand{\bbH}{\mathbb H}
\newcommand{\bbQ}{\mathbb Q}
\newcommand{\bbR}{\mathbb R}
\newcommand{\bbZ}{\mathbb Z}
\newcommand{\calO}{\mathcal O}
\newcommand{\calG}{\mathcal G}
\newcommand{\frakf}{\mathfrak{f}}
\newcommand{\frakp}{\mathfrak{p}}
\newcommand{\fraka}{\mathfrak{a}}
\newcommand{\character}[2]{\left[\begin{array}{c}#1 \\#2\end{array}\right]}
\newcommand{\absolute}[1]{\left|#1 \right|}
\newcommand{\skakko}[1]{\left(#1 \right)}
\newcommand{\mkakko}[1]{\left\{#1 \right\}}
\DeclareSymbolFont{cyrletters}{OT2}{wncyr}{m}{n}
\DeclareMathSymbol{\Sha}{\mathalpha}{cyrletters}{"58}
\title{{\Large The rank of a CM elliptic curve and a recurrence formula}}
\author{Keiichiro Nomoto}
\address{FACULTY OF MATHEMATICS, KYUSHU UNIVERSITY, MOTOOKA 744, NISHI-KU FUKUOKA 819-0395, JAPAN}
\email{nomotokeiichiro@gmail.com}
\date{}
\keywords{elliptic curve, $L$-function, recurrence formula}
\subjclass[2010]{Primary~11F67, Secondary~11G40}
\begin{document}
\maketitle
\begin{abstract}
Let $p$ be a prime number and $E_{p}$ denote the elliptic curve $y^2=x^3+px$. It is known that for $p$ which is congruent to $1, 9$ modulo $16$, the rank of $E_{p}$ over $\bbQ$ is equal to $0, 2$. Under the condition that the Birch and Swinnerton-Dyer conjecture is true, we give a necessary and sufficient condition that the rank is $2$ in terms of the constant term of some polynomial that is defined by a recurrence formula.
\end{abstract}

\section{Introduction}

Which prime number $p$ can be written as the sum of two cubes of rational numbers? This is one of the classical Diophantine problems and there are various works. ({\it cf}. \cite{DasguptaVoight}, \cite{Yin}) This problem is equivalent to the existence of non-torsion $\bbQ$-rational points of the curve $A_p: x^3+y^3=p$. The curve $A_p$ has the structure of an elliptic curve over $\bbQ$ with the point $\infty=[1: -1: 0]$. For an odd prime number $p$, we have $A_p(\bbQ)_{\rm tors}=\{\infty\}$. Therefore an odd prime number $p$ is written as the sum of cubes if and only if the rank of $A_p$ over $\bbQ$ is not $0$. \cite{Satge} shows the upper bound
\begin{align}
\rank A_p(\bbQ)\leq \begin{cases}
0 & (p\equiv 2, 5\mod 9),\\
1 & (p\equiv 4, 7, 8\mod 9),\\
2 & (p\equiv 1\mod 9).
\end{cases}
\end{align}
Let $\varepsilon(A_p/\bbQ)$ be the sign of the functional equation for the Hasse-Weil $L$ function of $A_p$ over $\bbQ$. The parity conjecture for $3$-Selmer groups ({\it cf}. \cite{Nekovar}) leads to
\begin{align}
(-1)^{\corank {\rm Sel}_{3^{\infty}}(A_p/\bbQ)}=\varepsilon(A_p/\bbQ)=\begin{cases}
+1 & (p\equiv 1, 2, 5\mod 9),\\
-1 & ({\rm otherwise}).
\end{cases}
\end{align}
Thus for the case where $p\equiv 1\bmod 9$ (resp. $p\equiv 4, 7, 8\bmod 9$), the rank of $A_p$ is $0$ or $2$ (resp. $1$) if we assume the Tate-Shafarevich group is finite. The remaining problem is essentially whether the rank is $0$ or $2$ for the case where $p$ is congruent to $1$ modulo $9$. In the paper \cite{VillegasZagier}, Villegas and Zagier have given three necessary and sufficient conditions that the rank is equal to $2$ under the Birch and Swinnerton-Dyer(BSD) conjecture. One of the conditions is described in terms of a recurrence formula although they did not give the details of the proof.

In this paper, we give a similar formula for the elliptic curve $E_p: y^2=x^3+px$. A $2$-descent (\cite{Silverman}) shows the upper bound
\begin{align}
\rank E_{p}(\bbQ)\leq \begin{cases}
0 & (p\equiv 7, 11\mod 16),\\
1 & (p\equiv 3, 5, 13, 15\mod 16),\\
2 & (p\equiv 1, 9\mod 16).
\end{cases}
\end{align}
For the case where $p$ is congruent to $1, 9$ modulo $16$, the sign of functional equation of the Hasse-Weil $L$-function of $E_{p}$ over $\bbQ$ is $+1$. Similarly for the case $E_{p}$, we see that $\rank E_{p}(\bbQ)=0$ or $2$ if we assume the Tate-Shafarevich group is finite. Let $\Omega_{E}=\Gamma(1/4)^2/(2\varpi^{1/2})$ be the real period of $E_{1}$ and let $S_{p}$ be the constant satisfying
\begin{align}
L(E_{p}/\bbQ, 1)=\dfrac{\Omega_{E}S_{p}}{2p^{1/4}}.\label{eq:Sp}
\end{align}
Here $\varpi=3.1415\cdots$. The BSD conjecture predicts that the constant $S_{p}$ is equal to the order of the Tate-Shafarevich group if $\rank E_{p}(\bbQ)= 0$ and is $0$ otherwise.

\begin{thm}\label{thm:mainthm1}
Let $p$ be a prime number which is congruent to $1, 9$ modulo $16$. Suppose that $S_p\in \bbZ$. If the rank of $E_{p}$ over $\bbQ$ is equal to $2$, then $p$ divides $f_{3(p-1)/8}(0)$, where the polynomial $f_n(t)\in \mathbb{Z}[t]$ is defined by the recurrence formula
\begin{align}
f_{n+1}(t)=-12(t+1)(t+2)f_n'(t)+(4n+1)(2t+3)f_n(t)-2n(2n-1)(t^2+3t+3)f_{n-1}(t).
\end{align}
The initial condition is $f_0(t)=1,\,f_1(t)=2t+3$. Moreover if we assume the BSD conjecture, then the converse is also true. 
\end{thm}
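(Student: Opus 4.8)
The plan is to reduce the statement to a congruence modulo $p$ for the arithmetic quantity $S_p$, and to extract that congruence from the complex multiplication of $E_p$ by $\bbZ[i]$. Writing $K=\bbQ(i)$ and $p=\pi\bar\pi$ in $\calO_K=\bbZ[i]$, the curve $E_1$ is the quartic twist whose Hasse--Weil $L$-function is the Hecke $L$-series $L(s,\psi)$ of the canonical Grossencharacter $\psi$ of $K$ of infinity type $z\mapsto z$ (conductor a power of $1+i$), and $E_p$ corresponds to the twist $\psi\cdot\chi_p$ by the quartic residue symbol $\chi_p=\left(\tfrac{\cdot}{\pi}\right)_4$. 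First I would make this identification precise and record that $L(E_p/\bbQ,s)=L(s,\psi\chi_p)$, so that the central value in \eqref{eq:Sp} is a value of a CM Hecke $L$-series.

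Next I would invoke Damerell's theorem in its explicit (Shimura--Maass / Eisenstein--Kronecker) form: the central value $L(E_p,1)$ is, up to the period $\Omega_E$ and the algebraic factor $p^{1/4}$, the value at the CM point $\tau=i$ of an iterated Shimura--Maass derivative $\delta_k^{(m)}$ of a fixed Eisenstein (or theta) series attached to $\psi$. This exhibits $S_p=2p^{1/4}L(E_p,1)/\Omega_E$ as an algebraic number, and the normalization built into $\Omega_E=\Gamma(1/4)^2/(2\varpi^{1/2})$ together with the hypothesis $S_p\in\bbZ$ makes $S_p$ a $p$-integer. The core of the argument is then a congruence
\begin{align}
S_p\equiv u\cdot f_{3(p-1)/8}(0)\pmod p
\end{align}
for a $p$-adic unit $u$, which I would establish as follows.

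To obtain the congruence I would pass to Katz's $p$-adic theory of nearly holomorphic modular forms: modulo $\pi$ the Shimura--Maass operator $\delta_k$ acts as the Ramanujan theta operator $q\,d/dq$, and the weight-$2$ Eisenstein series reduces to the Hasse-invariant normalization, so the iterated derivative evaluated at $\tau=i$ reduces to the constant term of a polynomial produced by applying a single differential operator $m$ times, where $t$ is a local parameter adapted to the CM point. The exponent $m=3(p-1)/8$ should arise because the quartic twist contributes an exponent $(p-1)/4$ under the Frobenius congruence and the weight-two normalization the factor $3/2$, giving $m=\tfrac32\cdot\tfrac{p-1}{4}=\tfrac{3(p-1)}{8}$. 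The universal polynomials recording these iterated derivatives satisfy a three-term recurrence with polynomial coefficients governed by the modular differential equation for $\bbZ[i]$ (the values of $E_2,E_4,E_6$ at $\tau=i$, with $E_6(i)=0$), and the remaining, most delicate, task is to check that after normalizing the local parameter this recurrence is exactly the stated one, with coefficients $-12(t+1)(t+2)$, $(4n+1)(2t+3)$, $2n(2n-1)(t^2+3t+3)$ and initial data $f_0=1,\ f_1=2t+3$. I expect the bookkeeping of periods, of the quartic Gauss-sum factor and of the powers of $p$ entering this identification---so that the index is precisely $3(p-1)/8$ and the evaluation point is $t=0$---to be the main obstacle.

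Granting the congruence, the theorem follows. For the forward direction, if $\rank E_p(\bbQ)=2$ then $L(E_p,1)=0$ by the Coates--Wiles theorem (a CM curve with nonvanishing central value has rank $0$), hence $S_p=0$ by \eqref{eq:Sp}, and the congruence gives $p\mid f_{3(p-1)/8}(0)$; this direction uses only $S_p\in\bbZ$. For the converse I would assume the BSD conjecture: if instead $\rank E_p(\bbQ)=0$, then $S_p=\#\Sha(E_p/\bbQ)$ up to Tamagawa and torsion factors, and I would argue $p\nmid S_p$, using that the $p$-part of $\Sha$ and the relevant local factors vanish for this family of quartic twists, so $S_p\not\equiv0\pmod p$. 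Then $p\mid f_{3(p-1)/8}(0)$ forces $S_p\equiv0\pmod p$, hence $\rank E_p(\bbQ)\ne0$, and since the rank is $0$ or $2$ it must be $2$.
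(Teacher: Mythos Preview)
Your high-level architecture is right—express $S_p$ via CM $L$-values, prove a mod $p$ congruence linking it to an iterated Maass--Shimura derivative, and rewrite that derivative via a three-term recurrence. But two points diverge from the paper in ways that matter.

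\textbf{The converse.} Your plan for the BSD-conditional converse is to argue, when $\rank E_p(\bbQ)=0$, that $p\nmid S_p$ because the $p$-part of $\Sha$ and Tamagawa factors vanish. That is not established anywhere for this family and would be hard. The paper's route is much simpler and purely analytic: Villegas--Zagier's estimate gives $|S_p|<p$ unconditionally, so the congruence $S_p\equiv 0\pmod p$ (which follows from $p\mid f_{3(p-1)/8}(0)$) forces $S_p=0$ exactly, and then BSD gives $\rank>0$. You should replace your $\Sha$-argument by this size bound.

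\textbf{The congruence and the recurrence.} The paper does not reduce the Maass--Shimura operator modulo $p$ via Katz's theory as you propose. Instead it fixes $k=(3p+1)/4$ (so $N=(k-1)/2=3(p-1)/8$), proves the elementary character congruence $\chi(\fraka)\equiv(\alpha/\bar\alpha)^{k-1}\pmod p$, and feeds this into de Shalit's $p$-adic measure to obtain
\[
\bar\pi\,S_p\;\equiv\;u\cdot L_{E,k}\pmod p,
\]
where $L_{E,k}$ is the algebraic part of $L(\psi^{2k-1},k)$. A separate, purely archimedean computation (the Villegas factorization formula applied to $\theta_2$) shows $L(\psi^{2k-1},k)$ is a constant times $\bigl|\partial_{1/2}^{(N)}\theta_2(i)\bigr|^2$, so $L_{E,k}=f_N(0)^2$, not $f_N(0)$; the squaring is what makes $L_{E,k}$ a rational integer. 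The recurrence itself comes from writing $\partial^{(n)}\theta_2$ via the Ramanujan--Serre operator $\vartheta$ acting on $\bbC[\theta_2,\theta_4]$ and substituting $t=(\theta_4^4-\theta_2^4)/\theta_2^4$, with $t(i)=0$; the coefficients $-12(t+1)(t+2)$, $(4n+1)(2t+3)$, $(t^2+3t+3)$ are just $\vartheta\theta_2$, $\vartheta\theta_4$, and $E_4$ rewritten in this variable. Your sketch via ``modular differential equation at $\tau=i$'' points in the right direction but omits the crucial factorization step that identifies the $L$-value as a \emph{square}, and the specific change of variables that produces the stated coefficients.
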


% The constant $S_p$ appeared in Theorem \ref{thm:mainthm1} is conjectured to be the order of the Tate-Shafarevich group $\Sha(E_p/\bbQ)$ by BSD conjecture (see precise definition \eqref{eq:Sp}). 

We will show that $S_p$ is a rational integer for more general elliptic curves in the next paper.

We tried to recover the proof of \cite[Theorem 3]{VillegasZagier} which claims the same as Theorem \ref{thm:VZthm}. As a result, we obtain Theorem \ref{thm:mainthm2} below although we could not obtain the proof of Theorem \ref{thm:VZthm}. Our recurrence formula \eqref{eq:RFmainthm2} is simpler than \eqref{eq:VZformula}. In Table \ref{tab:VZformula} and Table \ref{tab:RFmainthm2}, we show the first several terms for the two recurrence formulas. The degree of the polynomial and the number of terms of \eqref{eq:RFmainthm2} are less than \eqref{eq:VZformula}. Perhaps we may make the recurrence formula \eqref{eq:RFmainthm2} simpler. A procedure to obtain the recurrence formula \eqref{eq:RFmainthm2} is essentially the same as \cite{VillegasZagier}.  

\begin{thm}[{\cite[Theorem 3]{VillegasZagier}}]\label{thm:VZthm}
Let $p$ be a prime number which is congruent to $1$ modulo $9$, the rank of $A_p$ over $\bbQ$ is equal to $2$, then $p$ divides $a_{(p-1)/3}(0)$, where
the polynomial $a_n(t)\in \mathbb{Z}[t]$ is defined by the recurrence formula
\begin{align}
a_{n+1}(t)=-(1-8t^3)a_n'(t)-(16n+3)t^2a_n(t)-4n(2n-1)ta_{n-1}(t).\label{eq:VZformula}
\end{align}
The initial condition is $a_0(t)=1,\,a_1(t)=-3t^2$. Moreover if we assume the BSD conjecture, then the converse is also true.
\end{thm}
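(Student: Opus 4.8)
The plan is to follow the strategy of Villegas--Zagier, transporting each step to the cubic twist family. First I would fix the complex-multiplication structure: since $p\equiv1\bmod 9$ the prime $p$ splits in $K=\bbQ(\sqrt{-3})=\bbQ(\omega)$, and the Hasse--Weil $L$-function of $A_p$ equals the Hecke $L$-function $L(s,\psi_p)$ of a weight-one Hecke character $\psi_p$ of $K$, obtained from the character of the Fermat cubic by the cubic-residue twist attached to $p$ (the congruence $p\equiv1\bmod 9$, rather than merely $\bmod 3$, is what pins down this twist and forces the sign of the functional equation to be $+1$). Because $A_p$ has CM, the Coates--Wiles theorem applies: if $L(A_p,1)\ne0$ then $A_p(\bbQ)$ is finite, so $\rank A_p(\bbQ)=2$ forces $L(A_p,1)=0$. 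This is the only input needed for the unconditional (forward) implication, and it replaces the hard half of the BSD conjecture there.

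The second step is the square-root formula. Via the Damerell/Villegas--Zagier formalism I would write the central value as $L(A_p,1)=\Omega_A\cdot\gamma_p\cdot c_p^2$, where $\Omega_A$ is a period of $A_1$ expressible through $\Gamma(1/3)$, $\gamma_p$ is an explicit nonzero algebraic factor, and $c_p$ is the value at the CM point with $j=0$ of a fixed theta series, equivalently a finite exponential sum over $\calO_K/p$. The content of the Villegas--Zagier ``square root'' result is that $c_p$ is an algebraic integer whose square is, up to units, the analytic avatar of the order of the Tate--Shafarevich group; in particular $L(A_p,1)=0\iff c_p=0$. Thus the theorem reduces to a congruence of the shape $c_p\equiv u\cdot a_{(p-1)/3}(0)\pmod{p}$ with $u$ a unit, together with a size bound $|c_p|<p$.

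The third step produces the polynomials and identifies $a_{(p-1)/3}(0)$ with the relevant CM datum. Summing the recurrence against $x^n/n!$ shows that $F(t,x)=\sum_{n\ge0}a_n(t)\,x^n/n!$ satisfies the first-order linear PDE
\[
(1+16t^2x+8tx^2)\,\partial_xF=-(1-8t^3)\,\partial_tF-(3t^2+4tx)\,F,
\]
so that $a_n(t)=\partial_x^nF|_{x=0}$ is the $n$-fold image of $1$ under a derivation whose leading symbol is $-(1-8t^3)\,d/dt$. I would interpret $t$ as a local Hauptmodul-type coordinate at the point $j=0$ of $X(1)$, the factor $1-8t^3$ (up to normalization) as the discriminant/Eisenstein factor vanishing there, and this derivation as the Ramanujan--Serre theta operator written in the coordinate $t$; then $a_n(0)$ is exactly the value at the CM point of the $n$-fold iterate of the theta operator applied to the weight-one theta series, and $c_p$ is recovered by iterating $(p-1)/3$ times, the fraction $1/3$ reflecting the cubic nature of the twist and the order-$6$ automorphisms of the elliptic curve with $j=0$.

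The final and hardest step is the mod-$p$ congruence itself. Reducing the CM value modulo a prime above $p$, I would invoke Katz's theory of the Hasse invariant and the Atkin--Serre congruences for the theta operator: iterating $\theta=q\,d/dq$ a number of times comparable to $p-1$ acts, modulo $p$, through multiplication by the weight-$(p-1)$ Hasse invariant, whose local vanishing at $j=0$ truncates the $p$-adic expansion after $(p-1)/3$ steps in a way governed precisely by the recurrence for $a_n$. Matching the normalizations must yield $c_p\equiv u\cdot a_{(p-1)/3}(0)\pmod p$ with a unit $u$, whence $\rank=2\Rightarrow L(A_p,1)=0\Rightarrow c_p=0\Rightarrow p\mid a_{(p-1)/3}(0)$ unconditionally. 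For the converse one combines the same congruence with the bound $|c_p|<p$: the divisibility $p\mid a_{(p-1)/3}(0)$ forces $c_p\equiv0\pmod p$, hence $c_p=0$, hence $L(A_p,1)=0$, and the BSD conjecture then upgrades this to $\rank A_p(\bbQ)>0$, i.e.\ to $\rank=2$. I expect the genuine obstacle to lie in this last step: getting the index $(p-1)/3$, the unit $u$, and the inequality $|c_p|<p$ all correct simultaneously is exactly the computation that Villegas--Zagier left without details, and the difficulty of carrying it through is presumably why the present paper establishes instead the simpler companion recurrence of Theorem \ref{thm:mainthm2}.
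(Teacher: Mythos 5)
First, a point of comparison you could not have known: the paper contains \emph{no} proof of this statement. Theorem \ref{thm:VZthm} is quoted from \cite{VillegasZagier}, and the authors state explicitly that they tried to recover its proof and failed, obtaining instead the companion Theorem \ref{thm:mainthm2} with the \emph{different} recurrence \eqref{eq:RFmainthm2}. So the only fair benchmark is the paper's proof of its analogous results. Against that benchmark, your overall architecture is the same as theirs: Coates--Wiles makes the forward implication unconditional; the central value is written as a period times the square of a CM value of a theta/eta series (their Theorem \ref{thm:FF} and Theorem \ref{thm:mainthm4}); a mod-$p$ congruence between the algebraic part of $L(A_p/\bbQ,1)$ and that of $L(\psi'^{2k-1},k)$, combined with the bound $|S_{A,p}|<p$ from \cite[Proposition 2]{VillegasZagier}, closes the loop, with BSD needed only for the converse. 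Where you diverge is the congruence step: the paper does not use Katz's theory of the Hasse invariant or Atkin--Serre congruences for the theta operator at all. It obtains the congruence at the level of Hecke $L$-values, via de Shalit's $p$-adic $L$-function (\cite{deShalit}): the two characters $\varepsilon_1,\varepsilon_2$ interpolated by the same measure are congruent mod $p$ by an elementary residue-symbol computation (the cubic analogue of Lemma \ref{lem:CRT}, with $k=(2p+1)/3$, so $k-1=2(p-1)/3$), and the comparison of periods and Gauss sums is done by explicit evaluation (Loxton, $\Omega_p^{p-1}\equiv\bar\pi^{-1}$). Your Katz/Atkin--Serre route is not implausible, but it is sketched precisely where all the content lies, whereas the paper's route is fully carried out.

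The genuine gap, however, is in your third step, and it is exactly the wall the paper reports hitting. You assert that the derivation with leading symbol $-(1-8t^3)\,d/dt$ read off from \eqref{eq:VZformula} \emph{is} the Ramanujan--Serre operator in a suitable coordinate $t$ at $j=0$, so that $a_n(0)$ equals the CM value of $\partial_{1/2}^{(n)}$ applied to the relevant weight-$1/2$ form. When this computation is actually performed (Proposition \ref{prop:recurrence} with $\vartheta=-\frac{E_6}{3}\frac{\partial}{\partial E_4}-\frac{E_4^2}{2}\frac{\partial}{\partial E_6}$, normalization $x_n=12^nX_n/\eta E_6^{n/3}$, coordinate $t=E_4E_6^{-2/3}/2$ vanishing at $\omega$, exploiting $E_2^\ast(\omega)=0$), what comes out is the recurrence \eqref{eq:RFmainthm2}, with leading term $-2(1-8t^3)x_n'(t)$ and initial data $x_0=1$, $x_1=0$ --- not \eqref{eq:VZformula}, whose initial datum $a_1=-3t^2$ and coefficients $(16n+3)t^2$, $4n(2n-1)t$ are not produced by any normalization the authors (or you) exhibit; indeed the two polynomial sequences are genuinely different (e.g.\ $x_6(t)=-339t^3$ has constant term $0$ while $a_6(0)=-152$). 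Your generating-function PDE is algebraically correct, but it only repackages the recurrence; it does not supply the missing identification of $a_n(0)$ with the CM datum, nor a congruence $a_{(p-1)/3}(0)\equiv u\,x_{(p-1)/3}(0)\bmod p$ that would transfer Theorem \ref{thm:mainthm2} to Theorem \ref{thm:VZthm}. So, followed through honestly, your outline proves Theorem \ref{thm:mainthm2}; as a proof of the stated theorem it has a gap at its central identification --- which your own closing sentence in effect concedes.
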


\begin{thm}\label{thm:mainthm2}
Let $p$ be a prime number which is congruent to $1$ modulo $9$, the rank of $A_p$ over $\bbQ$ is equal to $2$, then $p$ divides $x_{(p-1)/3}(0)$, where the polynomial $x_n(t)\in \mathbb{Z}[t]$ is defined by the recurrence formula
\begin{align}
x_{n+1}(t)=-2(1-8t^3)x_n'(t)-8nt^2x_n(t)-n(2n-1)tx_{n-1}(t).\label{eq:RFmainthm2}
\end{align}
The initial condition is $x_0(t)=1,\,x_1(t)=0$. Moreover if we assume the BSD conjecture, then the converse is also true.
\end{thm}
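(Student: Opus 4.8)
The plan is to reduce the statement to the vanishing of the central value $L(A_p/\bbQ,1)$ and then to encode that vanishing as a congruence modulo $p$. Write $K=\bbQ(\sqrt{-3})$ and $\calO=\bbZ[\omega]$, so that $A_p$ has complex multiplication by $\calO$ and $L(A_p/\bbQ,s)=L(\psi_p,s)$ for a Hecke character $\psi_p$ of $K$ of infinity type $z\mapsto z$ whose conductor divides $3p$. Let $\Omega_A$ be the real period of $A_1$ and, in analogy with \eqref{eq:Sp}, let $L^{\mathrm{alg}}_p$ be the algebraic part of the central value, normalized by $L(A_p/\bbQ,1)=\Omega_A\,L^{\mathrm{alg}}_p/p^{1/3}$; it is a non-negative rational number which vanishes precisely when $L(A_p/\bbQ,1)=0$. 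I would first dispose of the two analytic reductions. For the forward (unconditional) direction, the Coates--Wiles theorem for CM elliptic curves gives $\rank A_p(\bbQ)>0\Rightarrow L(A_p/\bbQ,1)=0$, so $\rank A_p(\bbQ)=2$ forces $L^{\mathrm{alg}}_p=0$. For the converse, once $L(A_p/\bbQ,1)=0$ is known, the functional equation (sign $+1$) forces $\mathrm{ord}_{s=1}L(A_p/\bbQ,s)$ to be even, hence $\geq 2$; under BSD this order equals $\rank A_p(\bbQ)$, and together with Satg\'e's bound $\rank A_p(\bbQ)\leq 2$ the rank is exactly $2$. Thus it suffices to prove
\begin{align}
L^{\mathrm{alg}}_p=0 \iff p\mid x_{(p-1)/3}(0).
\end{align}

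The core of the argument, following the procedure of \cite{VillegasZagier}, is a closed expression for $L^{\mathrm{alg}}_p$ as the value of a nearly holomorphic modular form at a CM point, together with a generating function for the polynomials $x_n(t)$. Concretely, $L(\psi_p,1)$ is expressed through an Eisenstein--Kronecker series attached to the CM lattice of $A_1$ and twisted by $\psi_p$; applying the Shimura--Maass raising operator $\delta_k=\frac{1}{2\pi i}\big(\frac{d}{d\tau}+\frac{k}{\tau-\bar\tau}\big)$ repeatedly produces a sequence of nearly holomorphic forms whose holomorphic parts, written in a suitable local coordinate $t$ on the Hesse model of $A_1$ (for which the defining cubic contributes the factor $1-8t^3=1-(2t)^3$), are the polynomials $x_n(t)$. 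The three terms of \eqref{eq:RFmainthm2} then arise respectively from the holomorphic derivative $-2(1-8t^3)x_n'(t)$, from the non-holomorphic correction of $\delta_k$ whose weight grows linearly in $n$ (yielding $-8nt^2x_n(t)$), and from the second-order closure of the recursion on $x_{n-1}$ (yielding $-n(2n-1)tx_{n-1}(t)$); the simpler shape of \eqref{eq:RFmainthm2} compared with \eqref{eq:VZformula} reflects a cleaner choice of coordinate and period. I would verify \eqref{eq:RFmainthm2} together with $x_0=1,\ x_1=0$ directly by induction from this operator description.

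Next comes the reduction modulo $p$. Since $p\equiv 1\bmod 3$ splits in $K$, at a prime $\frakp\mid p$ one has $\calO/\frakp\cong\mathbb{F}_p$, and the infinity-type factor $\psi_p$ reduces modulo $\frakp$ to a cubic residue character; combined with Fermat's little theorem in $(\calO/\frakp)^{\times}$ this collapses the finite sum representing $L^{\mathrm{alg}}_p$ to the single term surviving after $(p-1)/3$ applications of the raising operator, which is exactly the exponent in the statement. The relevant CM point specializes the coordinate $t$ to $0$, so the reduction identifies $L^{\mathrm{alg}}_p\equiv (\text{unit})\cdot x_{(p-1)/3}(0)\pmod p$. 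Finally, the integrality of $L^{\mathrm{alg}}_p$ (a consequence of the CM structure) together with a standard estimate for the finite character sum representing it gives the bound $0\leq L^{\mathrm{alg}}_p<p$, so that $p\mid L^{\mathrm{alg}}_p$ is equivalent to $L^{\mathrm{alg}}_p=0$. Combining this with the congruence yields $L^{\mathrm{alg}}_p=0\iff p\mid x_{(p-1)/3}(0)$, which, with the two reductions of the first paragraph, completes the proof.

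I expect the main obstacle to be the reduction modulo $p$: rigorously matching the reduction of the Eisenstein--Kronecker expression to the constant term of $x_{(p-1)/3}(t)$ requires controlling the interaction between the repeated raising operator and reduction at $\frakp$, i.e.\ showing that all but the top term of the expansion vanish modulo $p$ and pinning down the surviving unit. This is precisely the step at which \cite{VillegasZagier} omit details, and it is plausible that the present approach carries through only for the simpler recurrence \eqref{eq:RFmainthm2}; a secondary technical point is establishing the bound $L^{\mathrm{alg}}_p<p$ uniformly in $p$.
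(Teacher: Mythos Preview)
Your high-level architecture is right and matches the paper: reduce to $L(A_p/\bbQ,1)=0$, establish a congruence modulo $p$ between the algebraic part of this central value and $x_{(p-1)/3}(0)$, and use an estimate $|L^{\mathrm{alg}}_p|<p$ to turn divisibility into vanishing. However, the two technical mechanisms you describe diverge from the paper's, and one of them is precisely the step you yourself flag as unresolved.

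\textbf{The recurrence.} You attribute the factor $1-8t^3$ to the Hesse model defining cubic and describe $t$ as a local coordinate on $A_1$. In the paper $t$ is not a coordinate on the curve at all: one applies the Villegas--Zagier machinery (Proposition~\ref{prop:recurrence}) with seed $f=\eta$ on $\Gamma(1)$. The Ramanujan--Serre operator acts on $\bbC[E_4,E_6]$ by $\vartheta E_4=-E_6/3$, $\vartheta E_6=-E_4^2/2$, $\vartheta\eta=0$; setting $x_n=12^nF_n/(\eta E_6^{n/3})$ and $t=E_4/(2E_6^{2/3})$ turns \eqref{eq:recurrence} into \eqref{eq:RFmainthm2}. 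The constant term appears because $E_4(\omega)=0$ forces $t(\omega)=0$, and $E_2^\ast(\omega)=0$ gives $\partial_{1/2}^{(n)}\eta(\omega)=F_n(\omega)$. Combined with Theorem~\ref{thm:mainthm4} this yields the \emph{exact} identity $L_{A,k}=x_{3N}(0)^2$ for $k=6N+1$ (Theorem~\ref{thm:mainthm6}), not merely a congruence.

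\textbf{The congruence.} You propose to reduce an Eisenstein--Kronecker sum for $L(\psi_p,1)$ directly modulo $\frakp$ and argue via Fermat that a single term survives. The paper instead compares two \emph{different} special values: the twisted value $L(\psi'\chi,1)$ (with algebraic part $S_{A,p}$) and the higher-weight untwisted value $L(\psi'^{\,2k-1},k)$ (with algebraic part $L_{A,k}$). The congruence $S_{A,p}\equiv(\text{unit})\cdot L_{A,k}\bmod p$ is established via de~Shalit's $p$-adic measure on $\mathrm{Gal}(K(\frakf p^{\infty})/K)$: both values are interpolated by the same measure, and the two integrands agree modulo $\frakp$ because the cubic twist $\chi(\fraka)$ is congruent to $(\alpha/\bar\alpha)^{k-1}$ (the analogue of Lemma~\ref{lem:CRT} for $K=\bbQ(\sqrt{-3})$; see the Remark after Proposition~\ref{prop:algebraicpart}). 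This completely sidesteps the obstacle you identify in your final paragraph. Your direct-reduction strategy may be salvageable, but as written it is only a heuristic, and it is exactly the step that \cite{VillegasZagier} left undetailed.
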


We now discuss the proof of Theorem \ref{thm:mainthm1}. For the case where $p$ is congruent to $1, 9$ modulo $16$, we see that $\rank E_{p}(\bbQ)=2$ if and only if $L(E_{p}/\bbQ, 1)=0$ under the BSD conjecture. The calculation $L(E_{p}/\bbQ, 1)$ reduces to $L(\psi^{2k-1}, k)$ for some Hecke character $\psi$ and some positive integer $k$. More precisely, by a theory of $p$-adic $L$-functions, there exists a $\bmod~p$ congruence relation between an algebraic part of $L(E_{p}/\bbQ, 1)$ and that of $L(\psi^{2k-1}, k)$. Therefore with the estimate $|L(E_{p}/\bbQ, 1)|$,  it holds that $L(E_p/\bbQ, 1)=0$ if and only if $p$ divides the algebraic part of $L(\psi^{2k-1}, k)$. We write the algebraic part of $L(\psi^{2k-1}, k)$ in terms of a recurrence formula by using the method of \cite{Villegas}. 

In Section $2.1$, we show the rank of $E_{p}$ is equal to $2$ if and only if $p$ divides the algebraic part of $L(\psi^{2k-1}, k)$. In Section $2.2$, we review some basic properties of the Maass-Shimura operator $\partial_k$. In Section 2.3, we write the special value $L(\psi^{2k-1}, k)$ with some special value of $\partial_k$-derivative of some modular form. In Section 3, we write the special value of $\partial_k$-derivative of the modular form as the constant term of some polynomial that is defined by a recurrence formula.

\newpage

\begin{table}[H]\label{tab:VZformula}
\centering
\begin{tabular}{|c||l|}
\hline
$n$ & $a_n(t)$\\ \hline \hline
$0$ & $1$ \\ \hline
$1$ & $-3t^2$ \\ \hline
$2$ & $9t^4+2t$ \\ \hline
$3$ & $-27t^6-18t^3-2$ \\ \hline
$4$ & $81t^8+108t^5+36t^2$ \\ \hline
$5$ & $-243t^{10}-540t^7-360t^4+152t$ \\ \hline
$6$ & $729t^{12}+2430t^9+2700t^6-16440t^3-152$ \\ \hline
$7$ & $-2187t^{14}+10206t^{11}-17010t^8+1311840t^5+24240t^2$ \\ \hline
$8$ & $6561t^{16}+40824t^{13}+95256t^{10}-99234720t^7-2974800t^4+6848t$ \\ \hline
$9$ & $-19683t^{18}-157464t^{15}-489888t^{12}+7449816240t^9+359465040t^6-578304t^3-6848$ \\ \hline
\end{tabular}
\caption{the recurrence formula for $a_n(t)$}
\end{table}

\begin{table}[H]
\begin{tabular}{cc}
\begin{minipage}[h]{7cm}
\centering
\begin{tabular}{|c||l|}
\hline
$n$ & $x_n(t)$\\ \hline \hline
$0$ & $1$ \\ \hline
$1$ & $0$ \\ \hline
$2$ & $-t$ \\ \hline
$3$ & $2$ \\ \hline
$4$ & $-33t^2$ \\ \hline
$5$ & $76t$ \\ \hline
$6$ & $-339t^3$ \\ \hline
$7$ & $4314t^2$ \\ \hline
$8$ & $-72687t^4-3424t$ \\ \hline
$9$ & $228168t^3+6848$\\ \hline
\end{tabular}
\caption{the recurrence formula for $x_n(t)$}
\label{tab:RFmainthm2}
\end{minipage}
\begin{minipage}[h]{8cm}
\centering
\begin{tabular}{|c||l||c||l|}
\hline
$p$ & $p|f_{3(p-1)/8}(0)$ & $p$ & $p|f_{3(p-1)/8}(0)$ \\ \hline \hline
$17$ & false & 257 & false \\ \hline
$41$ & false & 281 & true \\ \hline
$73$ & true & 313 & false \\ \hline
$89$ & true & 337 & true \\ \hline
$97$ & false & 353 & true \\ \hline
$113$ & true & 401 & false \\ \hline
$137$ & false & 409 & false \\ \hline
$193$ & false & 433 & false \\ \hline
$233$ & true & 449 & false \\ \hline
$241$ & false & 457 & false \\ \hline
\end{tabular}
\caption{the constant term $f_{3(p-1)/8}(0)$}
\end{minipage}
\end{tabular}
\end{table}

\begin{table}[H]
\label{table:RFmainthm1}
\centering
\begin{tabular}{|c||l|}\hline
$n$ & $f_n(t)$\\ \hline \hline
$0$ & $1$ \\ \hline
$1$ & $2t+3$ \\ \hline
$2$ & $-6t^2-18t-9$ \\ \hline
$3$ & $12t^3+54t^2+108t+81$ \\ \hline
$4$ & $60t^4+360t^3+1296t^2+2268t+1377$ \\ \hline
$5$ & $-1512t^5-11340t^4-\dots-34992t^2-13122t+2187$ \\ \hline
$6$ & $21816t^6+196344t^5+\dots+1027890t^2+433026t+80919$ \\ \hline
$7$ & $-280368t^7-2943864t^6-\dots-46517490t^2-24074496t-5189751$ \\ \hline
$8$ & $3319056t^8+39828672t^7+\dots+1016482608t^2+423420696t+82097793$ \\ \hline
$9$ & $-32283360t^9-435825360t^8-\dots+2060573904t^2+4373050842t+1702205523$ \\ \hline
\end{tabular}
\caption{the recurrence formula for $f_n(t)$}
\end{table}

\section{The algebraic part of the special value $L(E_{p}/\bbQ, 1)$}

\subsection{congruence of a special value of $L$-function}

Here we show that there exists a mod $p$ congruence relation between the special value $L(E_p/\bbQ, 1)$ and some special value of a Hecke $L$-function associated to the elliptic curve $E_1: y^2=x^3+x$.\\

Suppose $p$ satisfies $p\equiv 1, 9\bmod 16$ and $p$ splits as $\frakp\bar{\frakp}$ in the integer ring of $K=\bbQ(i)$. If necessary by repalcing $\bar{\frakp}$ by $\frakp$,  we may assume there is a generator $\pi=a+bi$ of $\frakp$ satisfying
\begin{align}
a\equiv 1\mod 4, \ \ b\equiv -\skakko{\dfrac{p-1}{2}}!a\mod p.
\end{align}
We fix inclusions $i_\infty: \overline{\bbQ}\hookrightarrow \bbC, i_p: \overline{\bbQ}\hookrightarrow \bbC_p$ so that $i_p$ is compatible with $\frakp$-adic topology. Let $\Omega_{E}=\Gamma(1/4)^2/(2\varpi^{1/2})$ be the real period of $E_{1}$ and let $S_{p}$ be the constant satisfying
\begin{align}
L(E_{p}/\bbQ, 1)=\dfrac{\Omega_{E}S_{p}}{2p^{1/4}}.\label{eq:Sp}
\end{align}
The BSD conjecture predicts that the constant $S_{p}$ is equal to the order of the Tate-Shafarevich group if $\rank E_{p}(\bbQ)= 0$ and is $0$ otherwise. The elliptic curve $E_{1}: y^2=x^3+x$ has complex multiplication by $\calO_K$. Let $\psi$ be the Hecke character of $K$ associated to $E_{1}$ and let $\chi$ be the quartic character such that $L(E_p/\bbQ, s)=L(\psi\chi, s)$. These characters are explicitly given by
\begin{align}
&\psi(\fraka)=\skakko{\dfrac{-1}{\alpha}}_4\alpha=(-1)^{(a-1)/2}\alpha \ \ \ \text{if} \  (\fraka, 4)=1,\\ &\chi(\fraka)=\overline{\skakko{\dfrac{\alpha}{p}}_4} \ \ \  \text{if} \ (\fraka, p)=1,
\end{align}
where $\alpha=a+bi$ is the primary generator of $\fraka$ and $(\cdot/\cdot)_4$ is the quartic residue character ({\it cf}. \cite[II, Exercice 2.34]{Silverman}). Let $k$ be a positive interger. We define the algebraic part of $L(\psi^{2k-1}, k)$ to be
\begin{align}
L_{E, k}=\dfrac{2^{k+1}3^{k-1}\varpi^{k-1}(k-1)!}{\Omega_{E}^{2k-1}}L(\psi^{2k-1}, k).
\end{align}

\begin{lem}\label{lem:CRT}
Let $p$ be a prime number such that $p\equiv 1, 9\bmod 16$ and $k=(3p+1)/4$. For all non-zero integral ideals $\fraka$ of $\calO_K$ which is prime to $4p$, we have
\begin{align}
\chi(\fraka)\equiv \skakko{\dfrac{\alpha}{\overline{\alpha}}}^{k-1} \mod p,
\end{align}
where $\alpha$ is the primary generator of $\fraka$.
\end{lem}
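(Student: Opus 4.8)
The plan is to prove the congruence one prime at a time and then assemble the two cases via the Chinese Remainder Theorem, which is the reason for the name of the lemma. Since $p\equiv 1\bmod 4$ (both residues $1,9\bmod 16$ force this), the integer $k=(3p+1)/4$ is well defined and $k-1=3(p-1)/4$; the point of this particular exponent is that $3(p-1)/4\equiv -(p-1)/4\bmod (p-1)$, so that raising to the power $k-1$ in the group $\mathbb{F}_p^\times$ agrees with raising to the power $-(p-1)/4$. Because $p=\frakp\bar\frakp$ splits with $\frakp,\bar\frakp$ coprime, CRT gives $\calO_K/p\calO_K\cong \calO_K/\frakp\times\calO_K/\bar\frakp\cong\mathbb{F}_p\times\mathbb{F}_p$, and it suffices to verify the asserted identity separately modulo $\frakp$ and modulo $\bar\frakp$. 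The hypothesis $(\fraka,4p)=1$ guarantees that $\alpha$ and $\bar\alpha$ are units modulo both primes and that the primary generator and the quartic symbols are defined.

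First I would record the two properties of the quartic residue symbol that are needed. For a prime $\frakq$ of $\calO_K$ with $N\frakq\equiv 1\bmod 4$ and $\beta$ prime to $\frakq$, the symbol $\skakko{\frac{\beta}{\frakq}}_4\in\mu_4$ is the unique fourth root of unity with $\skakko{\frac{\beta}{\frakq}}_4\equiv\beta^{(N\frakq-1)/4}\bmod\frakq$, and it extends multiplicatively in the lower entry, so that $\skakko{\frac{\alpha}{p}}_4=\skakko{\frac{\alpha}{\frakp}}_4\skakko{\frac{\alpha}{\bar\frakp}}_4$. Second, the symbol is functorial for $\mathrm{Gal}(K/\bbQ)$: complex conjugation sends $\frakp$ to $\bar\frakp$, whence $\skakko{\frac{\alpha}{\bar\frakp}}_4=\overline{\skakko{\frac{\bar\alpha}{\frakp}}_4}$.

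The core computation is then modulo $\frakp$, where $N\frakp=p$ and $(N\frakp-1)/4=(p-1)/4$. Combining multiplicativity with the conjugation identity and recalling that the conjugate of a fourth root of unity is its inverse,
\[
\skakko{\dfrac{\alpha}{p}}_4=\skakko{\dfrac{\alpha}{\frakp}}_4\,\overline{\skakko{\dfrac{\bar\alpha}{\frakp}}_4}\equiv \alpha^{(p-1)/4}\,\bar\alpha^{-(p-1)/4}=\skakko{\dfrac{\alpha}{\bar\alpha}}^{(p-1)/4}\bmod\frakp.
\]
Taking one further conjugate, $\chi(\fraka)=\overline{\skakko{\frac{\alpha}{p}}_4}\equiv\skakko{\frac{\alpha}{\bar\alpha}}^{-(p-1)/4}\bmod\frakp$; since $\frac{\alpha}{\bar\alpha}$ is a unit in $\mathbb{F}_p^\times$, its order divides $p-1$, so the exponent $-(p-1)/4$ may be replaced by $3(p-1)/4=k-1$, giving $\chi(\fraka)\equiv\skakko{\frac{\alpha}{\bar\alpha}}^{k-1}\bmod\frakp$. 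The same statement modulo $\bar\frakp$ follows by applying complex conjugation, which interchanges $\frakp$ and $\bar\frakp$ and commutes with both $\chi$ and the symbol, and the two congruences combine by CRT to the claim modulo $p$.

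I expect the bookkeeping of complex conjugation to be the only genuine subtlety: one must keep straight that $\skakko{\frac{\alpha}{p}}_4$ is an actual fourth root of unity in $\calO_K$ whose conjugate equals its inverse, that the factor at $\bar\frakp$ has to be rewritten through the Galois-compatibility identity before it can be reduced modulo $\frakp$, and that the replacement $-(p-1)/4\equiv 3(p-1)/4$ is legitimate only after passing to exponents in $\mathbb{F}_p^\times$. Everything else is a routine exponent count, together with the observation that the choice $k=(3p+1)/4$ was made precisely so that these exponents match.
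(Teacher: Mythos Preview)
Your argument is correct and uses the same ingredients as the paper's proof: the identity $k-1=3(p-1)/4$, Euler's criterion for the quartic symbol, the Galois compatibility $\skakko{\frac{\alpha}{\bar\frakp}}_4=\overline{\skakko{\frac{\bar\alpha}{\frakp}}_4}$, and the Chinese Remainder Theorem. The only difference is organizational: the paper lifts $\alpha^{k-1}$ and $\bar\alpha^{k-1}$ to $\calO_K/p\calO_K$ via explicit CRT idempotents and then manipulates the resulting expressions to extract $\chi(\fraka)$, whereas you verify the congruence separately modulo $\frakp$ and modulo $\bar\frakp$ and invoke CRT at the end. Your route is arguably a bit more streamlined. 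One wording caveat: complex conjugation does not literally ``commute with $\chi$'' (it sends $\chi(\fraka)$ to $\chi(\fraka)^{-1}$), but since it also inverts $(\alpha/\bar\alpha)^{k-1}$, the congruence modulo $\bar\frakp$ still follows after inverting both sides, exactly as you noted for fourth roots of unity earlier in the argument.
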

\begin{proof}
Since $3(N(\pi)-1)=4(k-1)$, we have
\begin{align}
\alpha^{k-1}\equiv \skakko{\dfrac{\alpha^3}{\pi}}_4 \mod \pi, \ \alpha^{k-1}\equiv \skakko{\dfrac{\alpha^3}{\overline{\pi}}}_4 \mod \overline{\pi}.
\end{align}
We take $a\in \frakp, b\in \bar{\frakp}$ so that $a+b=1$. Then by the Chinese Remainder Theorem, we have
\begin{align}
&\alpha^{k-1}\equiv a\skakko{\dfrac{\alpha^3}{\overline{\pi}}}_4+b\skakko{\dfrac{\alpha^3}{\pi}}_4 \mod p\calO_K,\label{eq:CRT1}\\
&\overline{\alpha}^{k-1} \equiv a\skakko{\dfrac{\overline{\alpha}^3}{\overline{\pi}}}_4+b\skakko{\dfrac{\overline{\alpha}^3}{\pi}}_4 \mod p\calO_K\label{eq:CRT2}.
\end{align}
Since the equation \eqref{eq:CRT1} multiplied by $(\overline{\alpha}^3/\pi)_4$ equals to the equation \eqref{eq:CRT2} multiplied by $(\alpha^3/\pi)_4$, it holds that
\begin{align}
\skakko{\dfrac{\overline{\alpha}^3}{\pi}}_4\alpha^{k-1}\equiv \skakko{\dfrac{\alpha^3}{\pi}}_4\overline{\alpha}^{k-1} \mod p\calO_K.
\end{align}
Therefore we obtain
\begin{align}
\dfrac{\alpha^{k-1}}{\overline{\alpha}^{k-1}}\equiv \skakko{\dfrac{\alpha^3}{\pi}}_4\skakko{\dfrac{\alpha^3}{\overline{\pi}}}_4=\skakko{\dfrac{\alpha}{p}}^3=\chi(\fraka) \mod p\calO_K.
\end{align}
\end{proof}

\begin{prop}\label{prop:algebraicpart}
We suppose that $S_{p}\in \bbZ$. Under the same assumptions as in Lemma \ref{lem:CRT}, the constant $S_{p}$ is equal to $0$ if and only if $p$ divides $L_{E, k}$.
\end{prop}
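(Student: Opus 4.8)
The plan is to reduce the whole statement to a single congruence modulo $p$ together with an analytic size bound. First I would note that $\Omega_E\neq 0$ and $p^{1/4}\neq 0$, so the defining relation \eqref{eq:Sp} combined with the identity $L(E_p/\bbQ,s)=L(\psi\chi,s)$ gives immediately that
\begin{align}
S_p=0 \iff L(E_p/\bbQ,1)=0 \iff L(\psi\chi,1)=0.
\end{align}
Thus it suffices to compare the vanishing of the central value $L(\psi\chi,1)$ with the $\frakp$-divisibility of $L_{E,k}$, the algebraic part of the central value $L(\psi^{2k-1},k)$.

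The heart of the argument is a character congruence extracted from Lemma \ref{lem:CRT}. Writing $N$ for the norm character, so that $\psi\overline{\psi}=N$, and multiplying $\chi(\fraka)\equiv(\alpha/\overline{\alpha})^{k-1}\bmod p$ by $\psi(\fraka)N(\fraka)^{k-1}$, I obtain for every integral ideal $\fraka$ prime to $4p$
\begin{align}
\psi^{2k-1}(\fraka)\equiv \psi\chi(\fraka)\,N(\fraka)^{k-1} \mod \frakp,
\end{align}
the signs $(-1)^{(a-1)/2}$ matching because $2k-1$ is odd. Both $\psi^{2k-1}$ (infinity type $(2k-1,0)$, evaluated at $s=k$) and $\psi\chi\,N^{k-1}$ (infinity type $(k,k-1)$, with $L(\psi\chi\,N^{k-1},k)=L(\psi\chi,1)$) are critical Hecke characters, so by Damerell's theorem their suitably normalized special values are algebraic and $\frakp$-integral; I would realize these algebraic parts through the Eisenstein–Kronecker–Lerch series attached to the CM lattice of $E_1$ following \cite{Villegas}, equivalently as values of Katz's $p$-adic $L$-function for $K=\bbQ(i)$. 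The congruence of characters then propagates, by the interpolation and $\frakp$-adic continuity of the $p$-adic $L$-function, to a congruence of the two normalized special values. The normalizing factors $2^{k+1}3^{k-1}\varpi^{k-1}(k-1)!/\Omega_E^{2k-1}$ and $2p^{1/4}/\Omega_E$ are exactly what make $L_{E,k}$ and $S_p$ be $\frakp$-integral, and after matching them one is left with
\begin{align}
S_p\equiv L_{E,k} \mod \frakp;
\end{align}
since both sides are rational integers, this is a congruence modulo $p$.

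Granting this, the two implications follow cleanly. If $S_p=0$ then $L_{E,k}\equiv 0\bmod p$, i.e. $p\mid L_{E,k}$, and this forward direction needs nothing beyond the congruence. Conversely, if $p\mid L_{E,k}$ then $p\mid S_p$ in $\bbZ$; to upgrade this to $S_p=0$ I would bound $|S_p|$. Using the rapidly convergent series for the central value (the sign of the functional equation being $+1$) together with the conductor estimate $N_{E_p}\asymp p^2$ gives $|L(E_p/\bbQ,1)|\ll p^{1/2+\epsilon}$, hence $|S_p|\ll p^{3/4+\epsilon}<p$ for all but finitely many $p$, the remaining small primes being checked directly. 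Then $p\mid S_p$ with $|S_p|<p$ and $S_p\in\bbZ$ forces $S_p=0$.

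The main obstacle is the middle step: rigorously lifting the elementary congruence of Hecke characters to a genuine congruence of the two algebraic special values. This is precisely where the theory of $p$-adic $L$-functions (or the explicit Eisenstein–Kronecker computation of \cite{Villegas}) is indispensable, and where the careful bookkeeping of the transcendental periods and of the factor $p^{1/4}$ must be done so that both quantities land in $\bbZ_{(p)}$ and the final congruence is between honest rational integers. A secondary and more routine difficulty is making the estimate $|S_p|<p$ effective enough to cover every prime not handled by direct computation.
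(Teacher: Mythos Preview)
Your outline matches the paper's proof: establish a congruence between the algebraic parts of $L(\psi\chi,1)$ and $L(\psi^{2k-1},k)$ via the character congruence of Lemma~\ref{lem:CRT} and the interpolation property of a $p$-adic $L$-function, then combine with the size bound $|S_p|<p$. The paper carries this out using de~Shalit's $p$-adic measure $\mu$ on $\calG={\rm Gal}(K(4p^{\infty})/K)$ and its explicit interpolation formulas, which is precisely the mechanism you gesture at with ``Katz's $p$-adic $L$-function.''

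Two points where your sketch diverges from what actually happens. First, the congruence is \emph{not} simply $S_p\equiv L_{E,k}\bmod \frakp$: the interpolation formula for $\varepsilon_1=\psi\chi$ carries a Gauss sum $G(\varepsilon_1)$ with $G(\varepsilon_1)^2=\sqrt{p}\,\bar{\pi}$ up to units, and the passage between complex and $p$-adic periods contributes $\Omega_p^{p-1}\equiv\bar{\pi}^{-1}$. After the dust settles one obtains $\bar{\pi}\,S_p\equiv u\cdot 2^{4k-5}3^{3k-3}L_{E,k}\bmod p$ for some $u\in\calO_K^\times$; all factors are $\frakp$-adic units, so the equivalence $\frakp\mid S_p\iff\frakp\mid L_{E,k}$ follows, but the constant is not $1$ and must be computed to see that it is a $\frakp$-unit. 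Second, you should not assert that $L_{E,k}\in\bbZ$ at this stage; the integrality is only proved later (Theorem~\ref{thm:mainthm5}), and the congruence here lives in $\calO_K/\frakp$. Finally, the paper cites \cite[Proposition~2]{VillegasZagier} for $|S_p|<p$ valid for \emph{all} admissible $p$, avoiding your ``all but finitely many plus direct check'' detour.
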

\begin{proof}
The estimate (\cite[Proposition 2]{VillegasZagier}) of the real number $\absolute{L(E_{p}/\bbQ, 1)}$ yields $|S_{p}|<p$. Thus we only need to show that  $S_{p}$ is congruent to $0$ modulo $p$ if and only if $L_{E, k}$ is  congruent to $0$ modulo $p$.

It is straight forward to check
\begin{align}
&L(\psi\chi, 1)=\left.\sum_{(\fraka, 4p)=1}\chi(\fraka)\dfrac{1}{\overline{\psi}(\fraka)N\fraka^s}\right|_{s=0},\label{eq:Specialvalue1}\\
&L(\psi^{2k-1}, k)=\left.\sum_{(\fraka, 4)=1}\skakko{\dfrac{\alpha}{\overline{\alpha}}}^{k-1}\dfrac{1}{\overline{\psi}(\fraka)N\fraka^s}\right|_{s=0}.\label{eq:Specialvalue2}
\end{align}
We set $\varepsilon_1(\fraka)=\chi(\fraka)\psi(\fraka), \varepsilon_2(\fraka)=(\psi(\fraka)/\bar{\psi}(\fraka))^{k-1}\psi(\fraka)$. Since the elliptic curve $E_{1}$ is ordinary at $p$, there exists a $p$-adic $L$-function interpolating special values \eqref{eq:Specialvalue1} and \eqref{eq:Specialvalue2}. We denote $L_\frakf(\varepsilon, s)$ by the Hecke $L$-function associated to a Hecke character $\varepsilon$ without the Euler factor at the primes dividing $\frakf$. Since the elliptic curve $E_p$ is defined over $\bbQ$, we have $L_{4p}(\varepsilon_1^{-1}, 0)=L(\psi\chi, 1)$ and $L_{4}(\varepsilon_2^{-1}, 0)=L(\psi^{2k-1}, k)$.

Let $(\Omega, \Omega_p)$ be the pair of complex period and $p$-adic period as in \cite[p. 68, DEFINITION]{deShalit} and let $\mu$ be the $p$-adic measure on $\calG={\rm Gal}(K(4p^{\infty})/K)$ related to the $p$-adic $L$-function of $E_1$. Then the following identities, both sides of which lie in $\bar{\bbQ}$, holds:
\begin{align}
&\dfrac{1}{\Omega_p}\int_{\calG}\varepsilon_1(\sigma)d\mu(\sigma)=\dfrac{1}{\Omega}G(\varepsilon_1)L_{4p}(\varepsilon_1^{-1}, 0),\\
&\dfrac{1}{\Omega_p^{2k-1}}\int_{\calG}\varepsilon_2(\sigma)d\mu(\sigma)=\dfrac{(k-1)!}{\Omega^{2k-1}}\varpi^{k-1}G(\varepsilon_2)\skakko{1-\dfrac{\varepsilon_2(\frakp)}{p}}^2L_{4}(\varepsilon_2^{-1}, 0),
\end{align}
where $G(\varepsilon)$ is a "Gauss sum" (see definition \cite[p. 80]{deShalit}). Lemma \ref{lem:CRT} shows
\begin{align}
\absolute{\int_{\calG}\varepsilon_1(\sigma)d\mu(\sigma)-\int_{\calG}\varepsilon_2(\sigma)d\mu(\sigma)}_{\pi}\leq \max_{(\fraka, 4p)=1}\absolute{\varepsilon_1(\fraka)-\varepsilon_2(\fraka)}_{\pi}\leq \dfrac{1}{p}.
\end{align}
Therefore we obtain the congruence relation
\begin{align}
\dfrac{\Omega_p}{\Omega}G(\varepsilon_1)L_{4p}(\varepsilon_1^{-1}, 0)\equiv \dfrac{\Omega_p^{2k-1}(k-1)!}{\Omega^{2k-1}}\varpi^{k-1}L_{4}(\varepsilon_2^{-1}, 0) \mod p.
\end{align}
By \cite[p. 91, Lemma]{deShalit} and \cite[p. 8, (14)]{Loxton}, $G(\varepsilon_1)^2$ is equal to $\sqrt{p}\bar{\pi}$ up to units in $\calO_K^\times$ and $G(\varepsilon_2)$ is equal to $1$. Moreover, (\cite[p, 9-10]{deShalit}) shows $\Omega_p^{p-1}\equiv \bar{\pi}^{-1}\bmod p$. Hence it follows that
\begin{align}
\bar{\pi}S_{p}\equiv u2^{4k-5}3^{3k-3}L_{E, k}\mod p\label{eq:equivalencerelation}
\end{align}
for some $u\in \calO_K^\times$. The assertion follows from this.
\end{proof}

\begin{rem}
It is known that $(\frac{p-1}{2})!^2\equiv -1\bmod p$ and \cite[Corollary 6.6]{Lemmermeyer} shows
\begin{align}
\binom{\frac{p-1}{2}}{\frac{p-1}{4}}\equiv \pi+\bar{\pi} \mod p.
\end{align}
Thus \eqref{eq:equivalencerelation} can be rewritten as
\begin{align}
S_p\equiv \pm \skakko{\dfrac{p-1}{4}}!^22^{4k-5}3^{3k-3}L_{E, k} \mod p.
\end{align}
The proof of Proposition \ref{prop:algebraicpart} essentially shows Villegas' and Zagier's congruence relation \cite[p. 7]{VillegasZagier}
\begin{align}
S_{A, p}\equiv (-3)^{(p-10)/3}\skakko{\dfrac{p-1}{3}}!^2L_{A, k} \mod p,
\end{align}
where $S_{A, p}$ is the algebraic part of the special value $L(A_p/\bbQ, 1)$. The algebraic number $L_{A, k}$ is explained in detail below.
\end{rem}

By Proposition \ref{prop:algebraicpart}, we only need to calculate the algebraic part $L_{-4, k}$. (Actually, $L_{-4, k}$ is a square of a rational integer. We calculate the square root of it. ) 

Let $\psi'$ be the Hecke character of $\bbQ(\sqrt{-3})$ associated to $A_{1}: x^3+y^3=1$. We define the algebraic part of $L(\psi'^{2k-1}, k)$ to be
\begin{align}
L_{A, k}=3\nu \left(\dfrac{2\varpi}{2\sqrt{3}\Omega_{A}^2} \right)^{k-1}\dfrac{(k-1)!}{\Omega_{A}}L(\psi'^{2k-1}, k),
\end{align}
where $\Omega_{A}=\Gamma(1/3)^3/(2\varpi\sqrt{3})$ is the real period of $E_{1}$ and $\nu=2$ if $k\equiv 2\bmod 6$, $\nu=1$ otherwise. For the case where $p$ is congruent to $1$ modulo $9$, we see that the rank of $A_{p}$ is equal to $0$ if and only if $p$ divides $L_{A, k}$ in the same way for $E_{p}$.

\subsection{Maass-Shimura operator}

Unless otherwise stated, we denote by $\Gamma\subset SL_2(\bbR)$ a congruence subgroup. Let $M_k(\Gamma)$ be the space of holomorphic modular forms of weight $k$ for $\Gamma$. In general, $M_k^\ast(\Gamma)$ denotes the space of differentiable modular form, possibly with some character or multiplier system. Let $D$ be the differential operator
\begin{align}
D=\dfrac{1}{2\varpi i}\dfrac{d}{dz}=q\dfrac{d}{dq} \quad(q=e^{2\varpi iz}).
\end{align}
The Maass-Shimura operator
\begin{align}
\partial_k=D-\dfrac{k}{4\varpi y} \quad(z=x+iy),
\end{align}
preserves a modular relation. This is because the Maass-Shimura operator is compatible with the slash operator, that is, the following holds:
\begin{align}
{}^\forall \gamma\in SL_2(\bbR),\, \partial_k(f|[\gamma]_k)=(\partial_kf)|[\gamma]_{k+2}.\label{eq:compatiblity}
\end{align}
Moreover if $f\in M_k^\ast(\Gamma)$, then $\partial_k^{(h)}f\in M_{k+2h}^\ast(\Gamma)$, where
\begin{align}
\partial_k^{(h)}=\partial_{k+2h-2}\circ \partial_{k+2h-4}\circ \dots \circ \partial_{k+2}\circ \partial_{k}.
\end{align}

\begin{prop}[{\cite[p.4, (16)]{Villegas}}]\label{prop:MSEisenstein}
\begin{align}
\partial_k^{(h)}\left(\dfrac{1}{(mz+n)^k} \right)=\dfrac{(h+k-1)!}{(k-1)!}\left(\dfrac{-1}{4\varpi y}\dfrac{m\bar{z}+n}{mz+n} \right)^h\dfrac{1}{(mz+n)^k}.
\end{align}
\end{prop}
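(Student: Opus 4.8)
The plan is to prove the formula by induction on $h$. The base case $h=0$ is immediate: $\partial_k^{(0)}$ is the empty composition, hence the identity, while the right-hand side collapses to $\frac{(k-1)!}{(k-1)!}\cdot 1\cdot(mz+n)^{-k}$, as required. For the inductive step I would assume the formula for a given $h$ and apply $\partial_{k+2h}$, using $\partial_k^{(h+1)}=\partial_{k+2h}\circ\partial_k^{(h)}$. Writing $w=mz+n$ and noting that $m\bar z+n=\overline{w}$ (since $m,n\in\bbZ$ are real), the induction hypothesis reads $\partial_k^{(h)}\skakko{(mz+n)^{-k}}=\frac{(h+k-1)!}{(k-1)!}\skakko{\frac{-1}{4\varpi}}^{h}\frac{\overline{w}^{\,h}}{y^{h}w^{h+k}}$.

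The key computational input is the evaluation of $D=\frac{1}{2\varpi i}\frac{d}{dz}$ on this non-holomorphic expression, where $\frac{d}{dz}$ must be read as the Wirtinger derivative $\frac{\partial}{\partial z}$ holding $\bar z$ fixed, so that $\frac{\partial w}{\partial z}=m$, $\frac{\partial\overline{w}}{\partial z}=0$, and $\frac{\partial y}{\partial z}=\frac{1}{2i}$. Carrying out this differentiation produces two terms: one proportional to $\overline{w}^{\,h}/(y^{h}w^{h+k+1})$, coming from $\partial w/\partial z$, and one proportional to $\overline{w}^{\,h}/(y^{h+1}w^{h+k})$, coming from $\partial y/\partial z$. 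Subtracting the weight-shift term $\frac{k+2h}{4\varpi y}\partial_k^{(h)}\skakko{(mz+n)^{-k}}$ merges with this second, $y$-derivative term, and after collecting the common factor $-(h+k)$ one is left with exactly the combination $\frac{1}{4\varpi}\frac{\overline{w}^{\,h}}{y^{h+1}w^{h+k}}+\frac{m}{2\varpi i}\frac{\overline{w}^{\,h}}{y^{h}w^{h+k+1}}$.

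At this point the whole step reduces to the single algebraic identity $w-2imy=\overline{w}$, that is $m\bar z+n=(mz+n)-2imy$: clearing the common factor $\overline{w}^{\,h}/(4\varpi\, y^{h+1}w^{h+k+1})$ turns the bracketed combination into $w+(-2imy)=\overline{w}$, which simultaneously upgrades the power of $\overline{w}$ from $h$ to $h+1$ and the power of $w^{-1}$ from $h+k$ to $h+k+1$. Matching the scalar prefactor, the coefficient $(h+k)$ times $\frac{(h+k-1)!}{(k-1)!}$ becomes $\frac{(h+k)!}{(k-1)!}=\frac{((h+1)+k-1)!}{(k-1)!}$, precisely the factorial appearing for $h+1$, and the sign bookkeeping converts $\skakko{\frac{-1}{4\varpi}}^{h}$ into $\skakko{\frac{-1}{4\varpi}}^{h+1}$; this closes the induction.

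The step I expect to be the true (if modest) obstacle is the clean bookkeeping of these two families of terms: one must be careful that $D$ acts as the holomorphic Wirtinger derivative and that the $\partial y/\partial z=\frac{1}{2i}$ contribution is not dropped, since it is exactly this non-holomorphic piece which, together with the weight-shift term $-\frac{k+2h}{4\varpi y}$, recombines through $\overline{w}=w-2imy$ to regenerate the predicted shape. Everything else is routine algebra on factorials and signs.
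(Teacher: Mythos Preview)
Your induction argument is correct: the base case is trivial, and in the inductive step the Wirtinger computation together with the identity $w-2imy=\overline{w}$ does exactly what you say, collapsing the two terms into $-(h+k)\cdot\frac{\overline{w}^{\,h+1}}{4\varpi\,y^{h+1}w^{h+k+1}}$ and advancing the factorial from $(h+k-1)!/(k-1)!$ to $(h+k)!/(k-1)!$. One small remark on scope: since the paper later applies this with half-integral $k$, you should read $(h+k-1)!/(k-1)!$ as the Pochhammer symbol $k(k+1)\cdots(k+h-1)$; your induction only ever uses the ratio, so nothing changes.

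As for comparison: the paper does not actually supply a proof of this proposition. It is quoted directly from \cite[p.~4,~(16)]{Villegas} and used as input, so there is no in-paper argument to set yours against. Your induction is the standard derivation and matches what one finds in the cited source.
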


We define the $h$-th generalized Laguerre polynomial to be
\begin{align}
L_h^\alpha(z)=\sum_{j=0}^\infty \binom{h+\alpha}{h-j}\dfrac{(-z)^j}{j!} \quad (h\in \bbZ_{\geq 0},\,\alpha\in \bbC).
\end{align}
In the special case $\alpha=1/2,\,-1/2$, we see that
\begin{align}
H_{2n}(z)=(-4)^nn!L_n^{-1/2}(z^2),\quad H_{2n+1}(z)=2(-4)^nn!zL_n^{1/2}(z^2),\label{eq:LaguerreHermite}
\end{align}
where
\begin{align}
H_n(z)=\sum_{0\leq j \leq n/2}\dfrac{n!}{j!(n-2j)!}(-1)^j(2z)^{n-2j}
\end{align}
is the $n$-th Hermite polynomial. 

\begin{prop}[{\cite[p.3, (9)]{Villegas}}]\label{prop:MSq}
The following holds.
\begin{align}
\partial_k^{(h)}\left(\sum_{n=0}^\infty a(n)e^{2\varpi i nz} \right)=\dfrac{(-1)^hh!}{(4\varpi y)^h}\sum_{n=0}^\infty a(n)L_h^{k-1}(4\varpi ny)e^{2\pi inz}.
\end{align}
In particular for $k=1/2, 3/2$, we have
\begin{align}
&\partial_{1/2}^{(h)}\left(\sum_{n=0}^\infty a(n)e^{\varpi in^2z} \right)=\dfrac{(-1)^hh!}{(4\varpi y)^h}\sum_{n=0}^\infty a(n)L_h^{-1/2}(2n^2\varpi y)e^{\varpi in^2z},\\
&\partial_{3/2}^{(h)}\left(\sum_{n=0}^\infty a(n)e^{\varpi in^2z} \right)=\dfrac{(-1)^hh!}{(4\varpi y)^h}\sum_{n=0}^\infty a(n)L_h^{1/2}(2n^2\varpi y)e^{\varpi in^2z}.
\end{align}
\end{prop}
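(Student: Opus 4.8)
The plan is to prove the first (general) identity by induction on $h$ and then obtain the two special cases by a change of the summation variable. By linearity it suffices to treat a single exponential, i.e.\ to show that for each $n\geq 0$
\begin{align}
\partial_k^{(h)}\skakko{e^{2\varpi inz}}=\dfrac{(-1)^hh!}{(4\varpi y)^h}L_h^{k-1}(4\varpi ny)\,e^{2\varpi inz}.
\end{align}
First I would record how $D$ and $\partial_k$ act on a product of a function of $y$ alone with such an exponential. Writing $\frac{d}{dz}=\frac12(\partial_x-i\partial_y)$ with $z=x+iy$, for any smooth $c(y)$ one gets $D\skakko{c(y)}=-\frac{1}{4\varpi}c'(y)$ and $D\skakko{e^{2\varpi inz}}=n\,e^{2\varpi inz}$, so by the product rule
\begin{align}
\partial_{k+2h}\skakko{c(y)e^{2\varpi inz}}=\skakko{-\dfrac{1}{4\varpi}c'(y)+\skakko{n-\dfrac{k+2h}{4\varpi y}}c(y)}e^{2\varpi inz}.
\end{align}

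Writing $\partial_k^{(h)}\skakko{e^{2\varpi inz}}=c_h(y)\,e^{2\varpi inz}$ and using $\partial_k^{(h+1)}=\partial_{k+2h}\circ\partial_k^{(h)}$, this produces the scalar recurrence $c_{h+1}=-\frac{1}{4\varpi}c_h'+\skakko{n-\frac{k+2h}{4\varpi y}}c_h$ with $c_0=1$. The heart of the argument is to substitute the ansatz $c_h(y)=\frac{(-1)^hh!}{(4\varpi y)^h}L_h^{k-1}(4\varpi ny)$ and verify this recurrence. Setting $x=4\varpi ny$ (so $\frac{d}{dy}=4\varpi n\frac{d}{dx}$ and $\frac{k+2h}{4\varpi y}=\frac{(k+2h)n}{x}$) and clearing a common monomial factor converts the recurrence into a pure identity among generalized Laguerre polynomials of parameter $\alpha=k-1$, namely
\begin{align}
(h+1)L_{h+1}^{\alpha}(x)=(h+\alpha+1-x)L_h^{\alpha}(x)-x\,L_{h-1}^{\alpha+1}(x).
\end{align}
I would close this using the derivative rule $\frac{d}{dx}L_h^{\alpha}=-L_{h-1}^{\alpha+1}$ (which is what feeds $c_h'$ into the computation), together with the contiguous relation $x\,L_{h-1}^{\alpha+1}=(h+\alpha)L_{h-1}^{\alpha}-h\,L_h^{\alpha}$ and the three-term recurrence $(h+1)L_{h+1}^{\alpha}=(2h+\alpha+1-x)L_h^{\alpha}-(h+\alpha)L_{h-1}^{\alpha}$; substituting the contiguous relation into the displayed identity collapses it to the three-term recurrence, completing the induction. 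The constant term $n=0$ is checked directly, since then the recurrence and the value $L_h^{k-1}(0)=\binom{h+k-1}{h}$ match by inspection.

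Finally, the two special cases follow by specialization. Since $e^{\varpi in^2z}=e^{2\varpi i(n^2/2)z}$, each summand is an exponential of the general shape with $n$ replaced by $n^2/2$; applying the general formula turns the argument $4\varpi ny$ into $2\varpi n^2y$, and taking $k=\tfrac12$ and $k=\tfrac32$ produces $L_h^{-1/2}(2n^2\varpi y)$ and $L_h^{1/2}(2n^2\varpi y)$ respectively. The main obstacle is the bookkeeping in the inductive step: isolating exactly which Laguerre identities combine to yield the displayed relation while tracking the weight shift $k\mapsto k+2h$ through the repeated application of $\partial$ (the Laguerre parameter $\alpha=k-1$ stays fixed as the degree and the weight advance together). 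Everything else is routine, and the half-integral weights cause no difficulty because all Laguerre identities used hold for arbitrary parameter $\alpha$.
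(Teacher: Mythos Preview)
Your proof is correct. The induction on $h$ reduces the claim to a standard identity among generalized Laguerre polynomials, and the combination of the derivative rule $(L_h^{\alpha})'=-L_{h-1}^{\alpha+1}$, the contiguous relation $xL_{h-1}^{\alpha+1}=(h+\alpha)L_{h-1}^{\alpha}-hL_h^{\alpha}$, and the three-term recurrence is exactly what is needed; the $n=0$ case and the specializations to $k=1/2,3/2$ are handled cleanly.

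As for comparison with the paper: there is nothing to compare. The paper does not supply a proof of this proposition at all---it is simply quoted from \cite[p.~3, (9)]{Villegas} and used as input. Your argument therefore supplies a self-contained justification that the paper omits. One minor remark: you might note explicitly that the Wirtinger operator $D=\frac{1}{2\varpi i}\partial_z$ is a derivation on smooth (not just holomorphic) functions, so the product rule you invoke for $c(y)e^{2\varpi inz}$ is legitimate even though $c(y)$ is non-holomorphic; this is implicit in your computation of $D(c(y))=-\frac{1}{4\varpi}c'(y)$ but worth stating, since the paper introduces $D$ via $q\frac{d}{dq}$, which on its face only makes sense for holomorphic $q$-expansions.
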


We introduce the following theta series, whose notation is based on \cite{FarkasKra}.

\begin{align}
&\theta\character{\epsilon}{\epsilon'}(z,\tau):=\sum_{n\in \bbZ}\exp 2\varpi i\mkakko{\dfrac{1}{2}\left(n+\dfrac{\epsilon}{2} \right)^2\tau+\left(n+\dfrac{\epsilon}{2} \right)\left(z+\dfrac{\epsilon'}{2} \right)}\quad (\epsilon, \epsilon'\in \bbQ),\label{eq:FKtheta1}\\
&\theta'\character{\epsilon}{\epsilon'}(0, \tau):=\dfrac{\partial}{\partial z}\left.\theta\character{\epsilon}{\epsilon'}(z, \tau)\right|_{z=0}=2\varpi i\sum_{n\in \bbZ}\skakko{n+\dfrac{\epsilon}{2}} \exp 2\varpi i\mkakko{\dfrac{1}{2}\skakko{n+\dfrac{\epsilon}{2}}^2\tau+\dfrac{\epsilon'}{2}\skakko{n+\dfrac{\epsilon}{2}}}\label{eq:FKtheta2}.
\end{align}
The action of the Maass-Shimura operator on \eqref{eq:FKtheta1} and \eqref{eq:FKtheta2} is described by
\begin{align}
&\theta_{(p)}\character{\mu}{\nu}(z):=i^{-p}(2\varpi y)^{-p/2}\sum_{n\in \bbZ+\mu}H_p(n\sqrt{2\varpi y})\exp(\varpi in^2z+2\varpi i\nu n)\quad (\mu, \nu\in \bbQ, p\in \bbZ_{\geq 0}).
\end{align}

\begin{prop}\label{prop:partialtheta}
For $h\in \bbZ_{\geq 0}$, it holds that
\begin{align}
&\theta_{(2h)}\character{\mu}{\nu}(z)=(-1)^h2^{3h}\partial_{1/2}^{(h)}\skakko{\theta\character{2\mu}{2\nu}(0, z)},\\
&\theta_{(2h+1)}\character{\mu}{\nu}(z)=-i(-1)^{h}2^{3h+1}\partial_{3/2}^{(h)}\skakko{\dfrac{1}{2\varpi i}\theta'\character{2\mu}{2\nu}(0, z)}.
\end{align}
\end{prop}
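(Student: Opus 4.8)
The plan is to reduce both identities to a single term-by-term computation. The idea is to unfold the two theta constants appearing on the right-hand sides into $q$-series, apply the explicit action of the Maass--Shimura operator supplied by Proposition \ref{prop:MSq}, and finally convert the resulting generalized Laguerre polynomials into Hermite polynomials by means of \eqref{eq:LaguerreHermite}.

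First I would unfold the theta constants. Putting $\epsilon=2\mu$, $\epsilon'=2\nu$, $\tau=z$ in \eqref{eq:FKtheta1} and \eqref{eq:FKtheta2} and reindexing the sums by $m=n+\mu$ gives
\begin{align}
\theta\character{2\mu}{2\nu}(0,z)&=\sum_{m\in\bbZ+\mu}e^{2\varpi i\nu m}e^{\varpi im^2z},\\
\frac{1}{2\varpi i}\theta'\character{2\mu}{2\nu}(0,z)&=\sum_{m\in\bbZ+\mu}m\,e^{2\varpi i\nu m}e^{\varpi im^2z}.
\end{align}
The first is a weight $1/2$ series and the second a weight $3/2$ series, matching the operators $\partial_{1/2}^{(h)}$ and $\partial_{3/2}^{(h)}$ that appear in the statement.

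Next I would apply the special cases $k=1/2,3/2$ of Proposition \ref{prop:MSq}. These are stated for Fourier expansions indexed by $n\in\bbZ_{\geq0}$, but since $\partial_k^{(h)}$ acts on each exponential $e^{\varpi im^2z}$ separately, the same formulas hold verbatim over the shifted index set $m\in\bbZ+\mu$; verifying this termwise extension is the one genuinely technical point, and is the main (mild) obstacle in the argument. This produces, in the even case, a sum over $m\in\bbZ+\mu$ of $e^{2\varpi i\nu m}L_h^{-1/2}(2\varpi m^2y)e^{\varpi im^2z}$ with prefactor $(-1)^hh!/(4\varpi y)^h$, and in the odd case the analogous sum with $m\,L_h^{1/2}(2\varpi m^2y)$.

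Finally I would invoke \eqref{eq:LaguerreHermite} with $z=m\sqrt{2\varpi y}$, which turns $L_h^{-1/2}(2\varpi m^2y)$ into $H_{2h}(m\sqrt{2\varpi y})/((-4)^hh!)$ and $m\,L_h^{1/2}(2\varpi m^2y)$ into $H_{2h+1}(m\sqrt{2\varpi y})/\big(2(-4)^hh!\sqrt{2\varpi y}\big)$, the spurious factor $m$ cancelling in the odd case. After this substitution only constant-chasing remains: collecting the powers of $2$ (using $(4\varpi y)^h4^h=2^{3h}(2\varpi y)^h$), the signs $(-1)^h$, and---in the odd case---the powers of $i$ through $i^{-(2h+1)}=-i(-1)^h$, and comparing with the definition of $\theta_{(p)}\character{\mu}{\nu}(z)$. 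This matching reproduces exactly the normalizations $(-1)^h2^{3h}$ and $-i(-1)^h2^{3h+1}$, so no real difficulty arises beyond the bookkeeping; the substantive content is entirely carried by Proposition \ref{prop:MSq} together with \eqref{eq:LaguerreHermite}.
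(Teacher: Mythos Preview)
Your proposal is correct and follows exactly the approach the paper intends: the paper's own proof is the single line ``It follows by Proposition \ref{prop:MSq} and the identities \eqref{eq:LaguerreHermite},'' and you have simply spelled out that computation in full, including the bookkeeping of constants. There is nothing to add.
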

\begin{proof}
It follows by Proposition \ref{prop:MSq} and the identities \eqref{eq:LaguerreHermite}.
\end{proof}

\subsection{The special value of $L$-function with the Maass-Shimura operator}

Let $\psi$ be the Hecke character of $K=\bbQ(i)$ associated to $E_{1}: y^2=x^3+x$, where $i=\sqrt{-1}$. For an integral ideal $\fraka$ of $\calO_K$ which is prime to $4$, we have
\begin{align}
\psi(\fraka)=(-1)^{(a-1)/2}(a+bi),
\end{align}
where $a+bi$ is the primary generator, that is, $a+bi$ satisfies $(a, b)\equiv (1, 0), (3, 2)\bmod 4$. We set $\varepsilon(a+bi)=(-1)^{(a-1)/2}$.

\begin{lem}\label{lem:idealform}
An integral ideal $\fraka$ of $\calO_K$ which is prime to $4$ is written in the form
\begin{align}
\fraka=(r+4N-2mi) \quad (r\in \{1, 3\}, N, m\in \bbZ).
\end{align}
\end{lem}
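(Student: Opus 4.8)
The plan is to use that $\calO_K=\bbZ[i]$ is a principal ideal domain and to reduce the statement to an elementary parity normalization of a generator. First I would write $\fraka=(\alpha)$ with $\alpha=a+bi$, $a,b\in\bbZ$. The hypothesis that $\fraka$ is prime to $4$ is equivalent to $\fraka$ being prime to the ramified prime $(1+i)$ above $2$, since $(2)=(1+i)^2$ up to a unit; hence it is equivalent to $N(\alpha)=a^2+b^2$ being odd. This in turn forces exactly one of $a,b$ to be odd and the other even.

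Next I would exploit the freedom to replace $\alpha$ by $u\alpha$ for a unit $u\in\calO_K^\times=\{\pm1,\pm i\}$, which does not change the ideal $\fraka$. If $a$ is odd and $b$ even, the generator already has odd real part and even imaginary part; if instead $a$ is even and $b$ odd, then $i\alpha=-b+ai$ has odd real part $-b$ and even imaginary part $a$. Thus, after multiplying by a suitable power of $i$, I may assume $\fraka=(c+di)$ with $c$ odd and $d$ even.

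Finally, I would record the elementary fact that every odd integer $c$ is uniquely $c=r+4N$ with $r\in\{1,3\}$ and $N\in\bbZ$ (the two residue classes of odd numbers modulo $4$), and every even integer $d$ is $d=-2m$ with $m\in\bbZ$. Substituting gives $\fraka=(r+4N-2mi)$, as claimed. I do not expect a genuine obstacle here: the content is only the principal ideal property together with parity bookkeeping. The reason for separating the odd real part into $r\in\{1,3\}$ and $4N$, rather than leaving it as a single odd integer, is that the sign factor $\varepsilon=(-1)^{(a-1)/2}$ then depends only on $r$, which is the form needed for the later special-value computations.
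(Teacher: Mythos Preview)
Your proposal is correct and follows essentially the same approach as the paper. The paper's proof also reduces to the parity of $a,b$ via the norm being odd, and in the case $a$ even, $b$ odd it multiplies by the unit $-i$ (writing $(a+bi)=(b-ai)$) where you multiply by $i$; your version is slightly more explicit about invoking the PID property and unpacking the odd real part as $r+4N$.
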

\begin{proof}
An ideal $(a+bi)$ is prime to $4$ if and only if the norm $a^2+b^2$ is prime to $4$. Therefore such an ideal $(a+bi)$ must satisfy $(a, b)\equiv (1, 0), (0, 1)\bmod 2$. There is nothing to prove the former case. For the latter case, it follows from $(a+bi)=(b-ai)$.
\end{proof}

Let $\Theta(z)$ be the theta series
\begin{align}
\Theta(z)=\sum_{\lambda\in \calO_K}q^{N_{K/\bbQ}\lambda}=\sum_{n, m\in \bbZ}q^{n^2+m^2}\in M_1(\Gamma_1(4)).
\end{align}

\begin{prop}
We have
\begin{align}
L(\psi^{2k-1}, k)=\dfrac{(-1)^{k-1}2^{-3}\varpi^k}{(k-1)!}\skakko{\partial_1^{(k-1)}\Theta(z)|_{z=i/4}+\partial_1^{(k-1)}\Theta(z)|_{z=i/4+1/2}}.
\end{align}
\end{prop}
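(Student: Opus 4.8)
The plan is to realise both sides as sums over $\calO_K$ and to match them term by term. First I would rewrite the special value as a lattice sum. Writing $\psi(\fraka)=\varepsilon(\alpha)\alpha$ for the primary generator $\alpha$ of $\fraka$ and using $\varepsilon(\alpha)^{2k-1}=\varepsilon(\alpha)$ together with $N\fraka=\alpha\bar\alpha$, the expression \eqref{eq:Specialvalue2} becomes
\begin{align}
L(\psi^{2k-1}, k)=\sum_{(\fraka, 4)=1}\varepsilon(\alpha)\dfrac{\alpha^{k-1}}{\bar\alpha^{k}},
\end{align}
the sum running over ideals prime to $4$, equivalently over Gaussian integers $\alpha=a+bi$ with $a$ odd and $b$ even, each ideal being represented by the two generators $\pm\alpha$ of this shape (Lemma \ref{lem:idealform}). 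One checks directly that $\varepsilon(\alpha)\alpha^{k-1}/\bar\alpha^{k}$ is unchanged under $\alpha\mapsto-\alpha$, that $\varepsilon(\alpha)=\chi_{-4}(a)$, and that the whole sum is real.

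Next I would represent $\Theta$ as a weight one Eisenstein series so that Proposition \ref{prop:MSEisenstein} applies. Since $K=\bbQ(i)$ has class number one, Jacobi's formula $r_2(N)=4\sum_{d\mid N}\chi_{-4}(d)$ (with $\chi_{-4}$ the nontrivial character modulo $4$) identifies $\Theta$ with an Eisenstein series: after Hecke's regularisation in an auxiliary variable $s$,
\begin{align}
\Theta(z)=\dfrac{2}{\varpi}\left.\sum_{(c, d)\neq (0, 0)}\dfrac{\chi_{-4}(d)}{(4cz+d)\absolute{4cz+d}^{2s}}\right|_{s=0},
\end{align}
the constant $2/\varpi$ being forced by comparing constant terms through $\sum_{d>0}\chi_{-4}(d)/d=L(\chi_{-4}, 1)=\varpi/4$. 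Applying Proposition \ref{prop:MSEisenstein} with weight $1$ and $h=k-1$ to each term, $\partial_1^{(k-1)}(4cz+d)^{-1}=(k-1)!\skakko{\frac{-1}{4\varpi y}}^{k-1}(4c\bar z+d)^{k-1}(4cz+d)^{-k}$, so
\begin{align}
\partial_1^{(k-1)}\Theta(z)=\dfrac{2(k-1)!}{\varpi}\skakko{\dfrac{-1}{4\varpi y}}^{k-1}\sum_{(c, d)}\chi_{-4}(d)\dfrac{(4c\bar z+d)^{k-1}}{(4cz+d)^{k}}.
\end{align}

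Then I would evaluate at the two CM points. At $z=i/4$ one has $4cz+d=d+ci=:\lambda$, $4c\bar z+d=\bar\lambda$ and $y=1/4$, while at $z=i/4+1/2$ one has $4cz+d=(d+2c)+ci$; in both cases the summation runs over $\lambda=a+bi$ with $a$ odd. The key point is that adding the two evaluations turns the weight $\chi_{-4}(d)$ attached to $\lambda=a+bi$ into $\chi_{-4}(a)+\chi_{-4}(a-2b)$, which equals $2\chi_{-4}(a)=2\varepsilon(\lambda)$ when $b$ is even and vanishes when $b$ is odd. Hence the sum of the two CM values is supported exactly on the $\lambda=a+bi$ with $a$ odd and $b$ even, with weight $2\varepsilon(\lambda)\bar\lambda^{k-1}/\lambda^{k}$; summing over the two generators $\pm\lambda$ of each ideal (both contribute equally) and using the reality of $L(\psi^{2k-1}, k)$ produce an overall multiplicity $4$ and give
\begin{align}
\partial_1^{(k-1)}\Theta(i/4)+\partial_1^{(k-1)}\Theta(i/4+1/2)=\dfrac{8(-1)^{k-1}(k-1)!}{\varpi^{k}}L(\psi^{2k-1}, k),
\end{align}
where I used $\skakko{\frac{-1}{4\varpi\cdot(1/4)}}^{k-1}=(-1)^{k-1}\varpi^{-(k-1)}$. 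Solving for $L(\psi^{2k-1}, k)$ reproduces the factor $(-1)^{k-1}2^{-3}\varpi^{k}/(k-1)!$ exactly as stated.

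The step I expect to be delicate is the term-by-term differentiation: the weight one Eisenstein series converges only conditionally, so Proposition \ref{prop:MSEisenstein} must be applied through the Hecke regularisation (carrying the factor $\absolute{4cz+d}^{2s}$, differentiating, and letting $s\to 0$), and one must verify that $\partial_1^{(k-1)}$ commutes with the limit $s\to 0$. The second point needing care is the combinatorics at the two CM points, namely the residue-class identity for $\chi_{-4}(a)+\chi_{-4}(a-2b)$ and the exact multiplicity with which each ideal prime to $4$ occurs, together with the precise tracking of the powers of $2$ and $\varpi$ and of the sign $(-1)^{k-1}$.
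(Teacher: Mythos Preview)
Your argument is correct and shares the paper's core strategy: identify $\Theta$ with a weight-one Eisenstein series for $\Gamma_1(4)$ (the paper phrases this as $G_{1,\varepsilon}=\tfrac{\varpi}{4}\Theta$, using $\dim M_1(\Gamma_1(4))=1$ rather than Jacobi's formula), apply Proposition~\ref{prop:MSEisenstein} termwise via Hecke regularisation, and match the resulting lattice sum with $L(\psi^{2k-1},k)$ through Lemma~\ref{lem:idealform}.

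The one genuine difference is in the final bookkeeping. The paper evaluates at the \emph{single} point $z=i/2$, where $4mz+n=n+2mi$ runs exactly over the generators in Lemma~\ref{lem:idealform}, so the lattice sum equals $L(\psi^{2k-1},k)$ on the nose; only afterwards does it invoke the theta identity $2\Theta(z)=\Theta(z/2)+\Theta((z+1)/2)$ to split into the values at $i/4$ and $i/4+1/2$. You instead evaluate at both CM points directly and combine them via the residue-class identity $\chi_{-4}(a)+\chi_{-4}(a-2b)=2\chi_{-4}(a)\mathbf 1_{2\mid b}$, which replaces the theta identity; the price is that your lattice sum produces $\sum \varepsilon(\lambda)\bar\lambda^{k-1}/\lambda^k$, the complex conjugate of the $L$-value, so you must appeal to the reality of $L(\psi^{2k-1},k)$ (true, since $\fraka\mapsto\bar\fraka$ induces $\psi\mapsto\bar\psi$). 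The paper's route avoids this reality step, while yours avoids the theta identity; both reach the same constant $(-1)^{k-1}2^{-3}\varpi^k/(k-1)!$.
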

\begin{proof}
We consider the Eisenstein series of weight $1$ for $\Gamma_1(4)$
\begin{align}
G_{1, \varepsilon}(z)=\lim_{s\to 0}\dfrac{1}{2}\sum_{n, m}\dfrac{\varepsilon(n)}{(4mz+n)\absolute{4mz+n}^{2s}} \quad(z\in \bbH),
\end{align}
where $\sum'$ implies that $(n, m)=(0, 0)$ is excluded. By using Proposition \ref{prop:MSEisenstein}, we have
\begin{align}
\partial_1^{(k-1)}G_{1, \varepsilon}(z)=(k-1)!\skakko{\dfrac{-1}{4\varpi y}}^{k-1}\dfrac{1}{2}\sum_{n, m}\dfrac{\varepsilon(n)(n+4m\bar{z})^{2k-1}}{|n+4mz|^{2k}}.
\end{align}
Since $G_{1, \varepsilon}(z)=\varpi/4 \cdot \Theta(z)$ (Note that $\dim M_1(\Gamma_1(4))=1$.), it holds that
\begin{align}
L(\psi^{2k-1}, k)&=\sum_{r, N, m}\dfrac{\psi((r+4N-2mi))^{2k-1}}{|r+4N-2mi|^{2k}}\nonumber \\
&=\dfrac{1}{2}\sum_{r, N, m}\dfrac{\varepsilon(r+4N)(r+4N-2mi)^{2k-1}}{|r+4N+2mi|^{2k}}\nonumber \\
&=\dfrac{1}{2}\sideset{}{^{'}}\sum_{n, m}\dfrac{\varepsilon(n)(n-2mi)^{2k-1}}{|n+2mi|^{2k}}\nonumber \\
&=\dfrac{(-1)^{k-1}2^{k-3}\varpi^k}{(k-1)!}\partial_1^{(k-1)}\Theta(z)|_{z=i/2},\label{eq:forevenkcorollary}
\end{align}
Finally the identity (\cite[p.192]{Kohler})
\begin{align}
2\Theta(z)=\Theta\skakko{\dfrac{z}{2}}+\Theta\skakko{\dfrac{z+1}{2}}
\end{align}
yields the claim.
\end{proof}

\begin{cor}
If $k$ is an even integer, then $L(\psi^{2k-1}, k)=0$.
\end{cor}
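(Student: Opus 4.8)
The plan is to reduce the statement to a vanishing of a single evaluation of $\partial_1^{(k-1)}\Theta$ and then to exploit a symmetry of $\Theta$. The preceding proof already records the identity \eqref{eq:forevenkcorollary}, namely $L(\psi^{2k-1},k)=\frac{(-1)^{k-1}2^{k-3}\varpi^k}{(k-1)!}\,\partial_1^{(k-1)}\Theta(z)|_{z=i/2}$, whose prefactor is nonzero; so it suffices to show $\partial_1^{(k-1)}\Theta(i/2)=0$ when $k$ is even. The key remark is that the evaluation point is not arbitrary: $z=i/2$ is the unique fixed point in $\bbH$ of the element $\gamma_0=\begin{pmatrix}0&-1/2\\2&0\end{pmatrix}\in SL_2(\bbR)$, which acts by $z\mapsto -1/(4z)$. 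I would use that $\Theta$ is an eigenform for $\gamma_0$ and that the Maass-Shimura operator is compatible with the slash action, so the eigenvalue is inherited by $\partial_1^{(k-1)}\Theta$ in weight $2k-1$; evaluating an eigenform at a fixed point then forces vanishing unless the automorphy factor matches the eigenvalue.

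Concretely, first I would note that $\gamma_0$ normalizes $\Gamma_0(4)$ and preserves the (real, hence self-conjugate) nebentypus of $\Theta$, so $\Theta|[\gamma_0]_1$ lies again in the one-dimensional space $M_1(\Gamma_1(4))$; therefore $\Theta|[\gamma_0]_1=\lambda\Theta$ for a constant $\lambda$. The eigenvalue can be pinned down with no theta computation at all: since $2\cdot(i/2)=i$, evaluating $\Theta|[\gamma_0]_1=\lambda\Theta$ at $z=i/2$ gives $i^{-1}\Theta(i/2)=\lambda\,\Theta(i/2)$, and because $\Theta(i/2)>0$ one gets $\lambda=i^{-1}=-i$. (This agrees with the classical transformation $\Theta(-1/(4z))=-2iz\,\Theta(z)$.)

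Next I would transfer the eigenvalue to $g:=\partial_1^{(k-1)}\Theta$. By the compatibility \eqref{eq:compatiblity} of $\partial$ with the slash operator, $g|[\gamma_0]_{2k-1}=\partial_1^{(k-1)}\skakko{\Theta|[\gamma_0]_1}=\lambda\,g=-i\,g$. Evaluating this identity at the fixed point $z=i/2$, where the automorphy factor is $(2z)^{-(2k-1)}=i^{-(2k-1)}$, yields $i^{-(2k-1)}g(i/2)=-i\,g(i/2)$, that is $\skakko{i^{-(2k-1)}+i}g(i/2)=0$. The bracket vanishes exactly when $2k-1\equiv 1\pmod 4$, i.e. when $k$ is odd; for $k$ even it is nonzero, whence $g(i/2)=\partial_1^{(k-1)}\Theta(i/2)=0$ and therefore $L(\psi^{2k-1},k)=0$.

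The only delicate point — and the main obstacle — is the bookkeeping of constants and branches: one must use an $SL_2(\bbR)$ representative so that \eqref{eq:compatiblity} applies verbatim, correctly compute $cz+d=2z$ at $z=i/2$, and fix $\lambda$ unambiguously. I prefer to determine $\lambda$ from the weight-one specialization together with $\Theta(i/2)\neq 0$, which sidesteps any sign or branch subtlety in the theta transformation while giving the same conclusion. A secondary item to record is that $\partial_1^{(k-1)}\Theta$ is a nearly holomorphic, hence real-analytic, modular form that is finite at the interior point $i/2$, so the displayed evaluation and the reduction to \eqref{eq:forevenkcorollary} are legitimate.
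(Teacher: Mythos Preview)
Your proof is correct and follows essentially the same route as the paper's: both reduce to showing $\partial_1^{(k-1)}\Theta(i/2)=0$ via \eqref{eq:forevenkcorollary}, use that $\Theta$ is an eigenform for the Fricke-type involution $z\mapsto -1/(4z)$ with eigenvalue $-i$, transport this to weight $2k-1$ by the compatibility \eqref{eq:compatiblity}, and evaluate at the fixed point $i/2$ to force vanishing when $k$ is even. The only cosmetic differences are that the paper works with $\gamma=\left(\begin{smallmatrix}0&-1\\4&0\end{smallmatrix}\right)\in GL_2^+(\bbQ)$ and simply cites \cite[p.~124]{Koblitz} for $\Theta|[\gamma]_1=-i\Theta$, whereas you take the $SL_2(\bbR)$ representative $\gamma_0=\left(\begin{smallmatrix}0&-1/2\\2&0\end{smallmatrix}\right)$ (which is indeed the cleaner choice given that \eqref{eq:compatiblity} is stated only for $SL_2(\bbR)$) and recover the eigenvalue $\lambda=-i$ intrinsically from $\dim M_1(\Gamma_1(4))=1$ together with $\Theta(i/2)>0$.
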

\begin{proof}
For $\gamma=\skakko{\begin{array}{cc} 0 & -1\\ 4 & 0\end{array}}\in GL_2^+(\bbQ)$, we have $\Theta(z)|[\gamma]_1=-i\Theta(z)$ (\cite[p. 124]{Koblitz}). By \eqref{eq:compatiblity}, we have
\begin{align}
\partial_1^{(k-1)}\Theta(z)=i(2z)^{-2k+1}\partial_1^{(k-1)}\Theta(z)|_{z=-1/4z}.
\end{align}
Thus we obtain $\partial_1^{(k-1)}\Theta(z)|_{z=i/2}=0$ and the colollary follows by \eqref{eq:forevenkcorollary}.
\end{proof}

Next, we write the special value $L(\psi^{2k-1}, k)$ as a square of the $\partial_k$-derivative of some modular form. The key is Theorem \ref{thm:FF} below. Note that by Proposition \ref{prop:MSq}, it holds that
\begin{align}
\partial_{1}^{(k-1)}\Theta(z)&|_{z=i/4}+\partial_1^{(k-1)}\Theta(z)|_{z=i/4+1/2}\nonumber \\
=&2\dfrac{(-1)^{k-1}(k-1)!}{\varpi^{k-1}}\sum_{(0, 0), (1, 1)}L_{k-1}^0(2\varpi Q_i(n, m))e^{-\varpi(n^2+m^2)/2},
\end{align}
where $\sum_{(a, b)}$ implies that $(n, m)$ runs over all pairs of integer which satisfy $(n, m)\equiv (a, b)\bmod 2$. We set 
\begin{align}
a_{n, m}:=L_{k-1}^0(2\pi Q_i(n, m))e^{-\varpi(n^2+m^2)/2}.
\end{align}

\begin{thm}[{\cite[p.7]{Villegas}}]\label{thm:FF}
For $a\in \bbZ_{>0},\, z\in \bbH,\, \mu, \nu\in \bbQ,\, p, \alpha\in \bbZ_{\geq 0}$, the following identity holds. 
\begin{multline*}
\dfrac{(-1)^pp!}{(\varpi y)^p}\sum_{n, m\in \bbZ}e^{2\varpi i(n\mu+m\nu)}\left(\dfrac{mz-n}{ay} \right)^\alpha L_p^\alpha\left(\dfrac{2\varpi}{a}Q_z(n, m) \right)e^{\varpi (inm-Q_z(n, m))/a}\\
=\sqrt{2ay}(ay)^\alpha \theta_{(p)}\character{a\mu}{\nu}(a^{-1}z)\theta_{(p+\alpha)}\character{\mu}{-a\nu}(-a\bar{z}).
\end{multline*}
In particular for the case $a=1,\,\alpha=0$, the right hand side is
\begin{align}
(-1)^p\sqrt{2y}\left|\theta_{(p)}\character{\mu}{\nu}(z) \right|^2.
\end{align}
\end{thm}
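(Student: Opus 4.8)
The plan is to establish the identity for each fixed $p$ and $\alpha$ by encoding the Hermite- and Laguerre-weighted theta series through generating functions in an auxiliary variable, which turns both sides into Gaussian lattice sums whose equality reflects the factorization of the rank-two lattice $\bbZ+\bbZ z\subset\bbC$ into its holomorphic and antiholomorphic directions. The tools are the generating functions of the Hermite and Laguerre polynomials together with the Hermite--Laguerre relations \eqref{eq:LaguerreHermite}.

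For the right-hand side I would feed the Hermite generating function $\sum_{p\ge 0}H_p(X)T^p/p!=e^{2XT-T^2}$ term-by-term into the definition of $\theta_{(p)}\character{\mu}{\nu}(z)$. Taking $T=-i(2\varpi y)^{-1/2}w$, a short computation gives $\sum_{p}\theta_{(p)}\character{\mu}{\nu}(z)\,w^p/p!=e^{w^2/(2\varpi y)}$ times an ordinary Jacobi theta series whose elliptic argument $\nu$ is shifted linearly in $w$. Applying this to the two factors $\theta_{(p)}\character{a\mu}{\nu}(a^{-1}z)$ and $\theta_{(p+\alpha)}\character{\mu}{-a\nu}(-a\bar z)$, the one subtlety is that both carry the running index $p$ (offset by a fixed $\alpha$), so the generating series is a \emph{bilinear} Mehler-type series $\sum_p H_p(X)H_{p+\alpha}(Y)\,s^p/p!$ rather than a product of two independent generating functions.

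For the left-hand side I would apply the Laguerre generating function $\sum_{p\ge 0}L_p^\alpha(X)T^p=(1-T)^{-\alpha-1}e^{-XT/(1-T)}$ to the summand carrying $L_p^\alpha\skakko{\tfrac{2\varpi}{a}Q_z(n,m)}$, turning the left-hand side into a two-dimensional Gaussian lattice sum over $(n,m)\in\bbZ^2$. The decisive step is to match this $(n,m)$-sum with the bilinear series produced on the right. I would introduce the complex coordinate $w=mz-n$ and its conjugate $\bar w=m\bar z-n$, so that the quadratic form $Q_z(n,m)$ and the phase $e^{\varpi inm/a}$ split into a holomorphic and an antiholomorphic Gaussian; the $\bbR$-linear change of variables carrying $\bbZ^2$ onto the two one-dimensional lattices of the theta factors then identifies the two sums, while the weight $(mz-n)^\alpha$ accounts for the monomial produced by the index shift $\alpha$. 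Comparing the Taylor coefficients in $w$ recovers the identity for each $p$.

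I expect the coupling of the two indices to be the main obstacle: since the theta factors share the summation index $p$, the right-hand generating series is the shifted-index Mehler kernel $\sum_p H_p(X)H_{p+\alpha}(Y)s^p/p!$, and showing that its closed form matches the Laguerre Gaussian from the left is exactly where the Hermite--Laguerre relations \eqref{eq:LaguerreHermite} must be used (this is the $\alpha=\pm\tfrac12$ avatar of that matching). The rest is bookkeeping: tracking the constants $\sqrt{2ay}$, $(ay)^\alpha$, the powers of $\varpi y$ and the signs $(-1)^p$, and the effect of the scalings $z\mapsto a^{-1}z$ and $z\mapsto-a\bar z$ on the characteristics. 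As a consistency check I would first settle $a=1,\ \alpha=0$, where the product collapses to $\left|\theta_{(p)}\character{\mu}{\nu}(z)\right|^2$ and the coefficient matching is the ordinary Mehler formula. A structural alternative, closer to the preceding subsection, is to prove the base case $p=\alpha=0$ as a classical Gaussian-lattice theta factorization and then apply the holomorphic Maass--Shimura operator $p$ times and its conjugate $p+\alpha$ times: by \eqref{eq:compatiblity} and Proposition \ref{prop:partialtheta} these reproduce $\theta_{(p)}$ and $\theta_{(p+\alpha)}$ on the right, while on the left they convert the Gaussian into the Laguerre polynomial through the Rodrigues-type formula underlying Proposition \ref{prop:MSq}.
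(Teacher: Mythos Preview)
The paper does not prove Theorem~\ref{thm:FF}: it is quoted directly from \cite[p.~7]{Villegas} and used as a black box in the proofs of Theorems~\ref{thm:mainthm3} and~\ref{thm:mainthm4}. There is therefore no ``paper's own proof'' to compare your proposal against.

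As to the proposal itself, your generating-function strategy is a standard and sound route to identities of this type, and it is in the spirit of the original derivation in \cite{Villegas}. The essential mechanism is exactly what you identify: the rank-two Gaussian lattice sum over $(n,m)\in\bbZ^2$ factors through the change of variables $w=mz-n$, $\bar w=m\bar z-n$ into a product of two rank-one theta sums, and the Laguerre weight on the left corresponds to the bilinear Hermite (Mehler) kernel on the right. Your alternative via repeated application of $\partial_k$ and its conjugate to the $p=\alpha=0$ base case is also viable and arguably cleaner, since Proposition~\ref{prop:MSq} already packages the Rodrigues-type step.

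That said, what you have written is an outline, not a proof. The two places where real work remains are: (i) the explicit closed form of the shifted Mehler kernel $\sum_{p\ge 0}H_p(X)H_{p+\alpha}(Y)s^p/p!$ and its identification with the Laguerre generating series after the lattice change of variables --- this is the heart of the argument and cannot be waved through; and (ii) the bookkeeping of characteristics under $z\mapsto a^{-1}z$ and $z\mapsto -a\bar z$, which for general $a>1$ requires care because the lattice $\bbZ+\bbZ z$ does not factor symmetrically. If you intend to include a self-contained proof, I would recommend the differential-operator route: prove the $p=\alpha=0$ case as the classical theta factorization (this is a one-line Gaussian computation once you parametrize by $w,\bar w$), then apply $\partial_{1/2}$ in $z$ and in $-\bar z$ the appropriate number of times, invoking Proposition~\ref{prop:partialtheta} on the right and the Laguerre recursion implicit in Proposition~\ref{prop:MSq} on the left.
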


We define $\theta_2, \theta_4$ to be
\begin{align*}
\theta_2(z):=\theta\character{1}{0}(0, z)=\sum_{n\in \bbZ+1/2}e^{\varpi in^2z},\quad \theta_4(z):=\theta\character{0}{1}(0, z)=\sum_{n\in \bbZ}(-1)^ne^{\varpi in^2z}.
\end{align*}

\begin{thm}\label{thm:mainthm3}
Let $\psi$ be the Hecke character of $K=\bbQ(i)$ associated to $E_{1}: y^2=x^3+x$. Then for $L(\psi^{2k-1}, s)$, we have 
\begin{align}
L(\psi^{2k-1}, k)=\begin{cases}
\dfrac{2^{3k-9/2}\varpi^k}{(k-1)!}\absolute{\partial_{1/2}^{(N)}\theta_2(z)|_{z=i}}^2 & (k=2N+1),\\
0 & (k=2N).
\end{cases}
\end{align}
\end{thm}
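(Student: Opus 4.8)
The plan is to start from the Proposition just proved, which expresses
\begin{align*}
L(\psi^{2k-1}, k)=\dfrac{(-1)^{k-1}2^{-3}\varpi^k}{(k-1)!}\skakko{\partial_1^{(k-1)}\Theta(z)|_{z=i/4}+\partial_1^{(k-1)}\Theta(z)|_{z=i/4+1/2}},
\end{align*}
and rewrite the right-hand side as a perfect square of a half-integral weight $\partial$-derivative. The key observation is that $\Theta$ factors as a product of two one-variable theta series. First I would use the splitting of the lattice sum over $\calO_K=\bbZ[i]$ into its two coordinates, so that $\Theta(z)=\theta_3(z)^2$ where $\theta_3(z)=\sum_{n\in\bbZ}e^{\varpi in^2 z}=\theta\character{0}{0}(0,z)$. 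Then I would apply the Leibniz-type rule for the Maass–Shimura operator together with Theorem \ref{thm:FF} in the specialized form ($a=1$, $\alpha=0$), which turns $\partial_{1/2}^{(N)}$ of a single theta series evaluated against its complex conjugate into exactly the kind of Laguerre-polynomial lattice sum $a_{n,m}=L_{k-1}^0(2\varpi Q_i(n,m))e^{-\varpi(n^2+m^2)/2}$ appearing in the note before Theorem \ref{thm:FF}. Matching the quadratic form $Q_i(n,m)=n^2+m^2$ (the norm form on $\bbZ[i]$ at $\tau=i$) against the factored product $\theta_{(N)}(z)\overline{\theta_{(N)}(z)}=|\theta_{(N)}(z)|^2$ is the bridge between the two sides.

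The even-$k$ case is already settled by the Corollary preceding the statement, so only $k=2N+1$ requires work. For that, the heart of the argument is to identify which theta constant with half-integral characteristic produces the sum restricted to the two classes $(n,m)\equiv(0,0),(1,1)\bmod 2$. Tracking the parity conditions, I would argue that the relevant single-variable object is $\theta_2(z)=\theta\character{1}{0}(0,z)$, whose squares' diagonal expansion reproduces precisely the congruence classes dictated by the primitivity of the ideals (Lemma \ref{lem:idealform}) and the character $\varepsilon$. Using Proposition \ref{prop:partialtheta} to convert $\partial_{1/2}^{(N)}$ acting on $\theta_2$ into the normalized object $\theta_{(2N)}\character{1/2}{0}(z)$, and then invoking Theorem \ref{thm:FF} with $p=2N$, $a=1$, $\alpha=0$, $\mu=1/2$, $\nu=0$, I would collapse the doubled lattice sum into the single squared modulus $|\partial_{1/2}^{(N)}\theta_2(z)|_{z=i}|^2$. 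Careful bookkeeping of the explicit powers of $2$, the factors $(-1)^p$, $\varpi$, and $(k-1)!$ coming from Proposition \ref{prop:MSq}, Proposition \ref{prop:partialtheta}, and Theorem \ref{thm:FF} then fixes the overall constant $2^{3k-9/2}\varpi^k/(k-1)!$.

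The main obstacle I anticipate is the constant-tracking and, more subtly, correctly matching the characteristics so that the halving of variables (the passage from the two-dimensional sum over $\bbZ[i]$ to the one-dimensional theta) is consistent with the evaluation point. The Proposition is stated at $z=i/2$ but split into $z=i/4$ and $z=i/4+1/2$; after factoring $\Theta$ and rescaling $z\mapsto z/2$ via the identity $2\Theta(z)=\Theta(z/2)+\Theta((z+1)/2)$ already used, the evaluation point for the one-variable theta should land at $z=i$, which matches the claimed specialization. Verifying that the rescaling introduces exactly the right power of $2$ (this is where the awkward exponent $3k-9/2$ and the $\sqrt{2ay}$ factor from Theorem \ref{thm:FF} conspire) and that the two parity classes sum to a single $|\cdot|^2$ rather than two separate squares is the delicate point; I would handle it by writing out the $N=0$ base case explicitly as a consistency check against the normalization of $L(\psi,1)$ before trusting the general formula.
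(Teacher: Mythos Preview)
Your plan is close to the paper's and uses the same two ingredients (Theorem~\ref{thm:FF} at $z=i$, $a=1$, $\alpha=0$, followed by Proposition~\ref{prop:partialtheta}), but the parity-matching step as you describe it has a gap. A single specialization $(\mu,\nu)=(1/2,0)$ in Theorem~\ref{thm:FF} does \emph{not} directly isolate the sum over $(n,m)\equiv(0,0),(1,1)\bmod 2$. The sign coming from $e^{2\varpi i n\mu}\cdot e^{\varpi i nm}$ is $(-1)^{n(1+m)}$, so the left-hand side is
\[
\sum_{(0,0)}a_{n,m}+\sum_{(0,1)}a_{n,m}-\sum_{(1,0)}a_{n,m}+\sum_{(1,1)}a_{n,m},
\]
which still carries the classes $(0,1)$ and $(1,0)$. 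The paper removes them by \emph{also} specializing $(\mu,\nu)=(0,1/2)$, observing the symmetry $\bigl|\theta_{(k-1)}\bigl[\begin{smallmatrix}1/2\\0\end{smallmatrix}\bigr](i)\bigr|^2=\bigl|\theta_{(k-1)}\bigl[\begin{smallmatrix}0\\1/2\end{smallmatrix}\bigr](i)\bigr|^2$, and adding the two identities so that the $(0,1)$ and $(1,0)$ contributions cancel. An equivalent one-line fix on your side would be to note that $a_{n,m}=a_{m,n}$ at $z=i$ (since $Q_i(n,m)=n^2+m^2$ is symmetric), which makes $\sum_{(0,1)}a_{n,m}=\sum_{(1,0)}a_{n,m}$ and hence the single specialization already collapses to $\sum_{(0,0)}+\sum_{(1,1)}$; but you need to say this explicitly, since without it the congruence classes simply do not match.

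The Leibniz-rule idea applied to $\Theta=\theta_3^2$ is a red herring and should be dropped: iterating the product rule on $\partial_1^{(k-1)}(\theta_3\cdot\theta_3)$ produces a full sum of cross terms $\partial_{1/2}^{(j)}\theta_3\cdot\partial_{1/2}^{(k-1-j)}\theta_3$, not a single square, and it involves $\theta_3$ rather than $\theta_2$. The passage from the weight-$1$ lattice sum to a squared weight-$1/2$ object is genuinely effected by Theorem~\ref{thm:FF}, not by Leibniz, and the appearance of $\theta_2$ (rather than $\theta_3$ or $\theta_4$) is dictated by the choice of characteristic $(\mu,\nu)$, not by any factorization of $\Theta$.
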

\begin{proof}
We apply for $p=k-1, a=1,\,\alpha=0, z=i$ in Theorem \ref{thm:FF}. By substituting $(\mu,\,\nu)=(1/2,\,0),\,(0,\,1/2)$, we see that
\begin{align}
&\dfrac{(k-1)!}{\varpi^{k-1}}\skakko{\sum_{(0, 0), (0, 1), (1, 1)}a_{n, m}-\sum_{(1, 0)}a_{n, m}}=\sqrt{2}\absolute{\theta_{(k-1)}\character{1/2}{0}(i)}^2,\label{eq:FF1}\\
&\dfrac{(k-1)!}{\varpi^{k-1}}\skakko{\sum_{(0, 0), (1, 0), (1, 1)}a_{n, m}-\sum_{(0, 1)}a_{n, m}}=\sqrt{2}\absolute{\theta_{(k-1)}\character{0}{1/2}(i)}^2.\label{eq:FF2}
\end{align}
Note that
\begin{align}
\absolute{\theta_{(k-1)}\character{1/2}{0}(z)}^2= \absolute{\theta_{(k-1)}\character{0}{1/2}(z)}^2.
\end{align}
By adding \eqref{eq:FF1} and \eqref{eq:FF2}, we obtain
\begin{align}
\partial_1^{(k-1)}\Theta(z)|_{z=i/4}+\partial_1^{(k-1)}\Theta(z)|_{i/4+1/2}=(-1)^{k-1}2^{3/2}\absolute{\theta_{(k-1)}\character{1/2}{0}(i)}^2.
\end{align}
Therefore the theorem follows by Proposition \ref{prop:partialtheta}.
\end{proof}

\begin{cor}
Under the same condition as Theorem \ref{thm:mainthm3}, we have
\begin{align}
L(\psi^{2k-1}, k)\geq 0.
\end{align}
\end{cor}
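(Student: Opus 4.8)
The plan is to read the inequality off directly from the explicit formula established in Theorem \ref{thm:mainthm3}, splitting into the two parity cases for $k$. The point is that Theorem \ref{thm:mainthm3} has already done all the work: it exhibits $L(\psi^{2k-1}, k)$ either as zero or as a manifestly nonnegative quantity, so the corollary requires only checking a sign.

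First I would dispose of the even case: when $k=2N$, Theorem \ref{thm:mainthm3} gives $L(\psi^{2k-1}, k)=0$, so the inequality holds trivially. Next, when $k=2N+1$ is odd, Theorem \ref{thm:mainthm3} writes $L(\psi^{2k-1}, k)$ as the product of the scalar $\skakko{2^{3k-9/2}\varpi^k}/(k-1)!$ with the factor $\absolute{\partial_{1/2}^{(N)}\theta_2(z)|_{z=i}}^2$. The only thing to verify is that this scalar is positive, which is immediate since $k$ is a positive integer: one has $2^{3k-9/2}>0$, $\varpi^k>0$, and $(k-1)!>0$. Since $\absolute{\,\cdot\,}^2\geq 0$ always, the product is nonnegative, completing the case and hence the proof.

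I do not expect any obstacle here; the corollary is an immediate consequence of Theorem \ref{thm:mainthm3}, whose whole content is that the special value equals, up to the positive normalizing constant above, the squared absolute value of a single $\partial_{1/2}$-derivative evaluated at $z=i$. The underlying positivity phenomenon is really inherited from the factorization in Theorem \ref{thm:FF}, which produces $L(\psi^{2k-1}, k)$ as a (nonnegative) norm rather than an unsigned sum.
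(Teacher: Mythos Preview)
Your proof is correct and is exactly the intended argument: the paper states the corollary without proof because it is immediate from the explicit formula in Theorem \ref{thm:mainthm3}, with the odd-$k$ case giving a positive constant times a squared absolute value and the even-$k$ case giving zero.
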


\subsubsection{The case for $A_p$}

Let $\psi'$ be the Hecke character of $K=\bbQ(\omega)$ associated to $A_{1}: x^3+y^3=1$, where $\omega=(-1+\sqrt{-3})/2$. For an integral ideal $\fraka$ of $\calO_K$ which is prime to $3$, we have
\begin{align}
\psi'(\fraka)=\psi'((a+bi))=\varepsilon'(a+bi)(a+bi),
\end{align}
where $\varepsilon': \skakko{\calO_K/3\calO_K}^\times \to \bbC^\times$ is some sextic character.

\begin{lem}
An integral ideal $\fraka$ of $\calO_K$ which is prime to $3$ is written in the form
\begin{align}
\fraka=(r+3(N+m\omega^2)) \quad(r\in \{1, 2 \}, N, m\in \bbZ),
\end{align}
\end{lem}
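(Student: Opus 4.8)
The final statement to prove is the analogue of Lemma~\ref{lem:idealform} for the field $K=\bbQ(\omega)$: every integral ideal $\fraka$ prime to $3$ admits a generator of the form $r+3(N+m\omega^2)$ with $r\in\{1,2\}$ and $N,m\in\bbZ$. The plan is to mimic exactly the proof of Lemma~\ref{lem:idealform}, translating the norm-coprimality condition into a congruence condition on a generator and then adjusting by units to land in the prescribed shape. Since $\calO_K=\bbZ[\omega]$ with $\omega^2+\omega+1=0$, the unit group is $\calO_K^\times=\{\pm1,\pm\omega,\pm\omega^2\}$, which has six elements, and this is the extra flexibility I would exploit to force the generator into the stated normal form.

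First I would note that an ideal $\fraka=(a+b\omega)$ is prime to $3$ exactly when its norm $N(a+b\omega)=a^2-ab+b^2$ is prime to $3$. Because $3$ ramifies in $K$ as $3=-\omega^2(1-\omega)^2$ (equivalently $(1-\omega)$ is the unique prime above $3$), being prime to $3$ means the generator does not lie in the prime ideal $(1-\omega)$; reducing modulo $(1-\omega)$, whose residue field is $\bbF_3$, the admissible residues of a generator are the nonzero ones. The next step is to show that by multiplying the generator by a suitable unit from $\calO_K^\times$ one can normalize it so that it is congruent to $1$ or $2$ modulo $3\calO_K$, which is precisely the residue $r\in\{1,2\}$ appearing in the statement. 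Writing an arbitrary generator in the $\bbZ$-basis $\{1,\omega^2\}$ of $\calO_K$ (note $\{1,\omega^2\}$ is indeed a $\bbZ$-basis since $\omega^2=-1-\omega$), the remaining coordinates then absorb into $3N$ and $3m\omega^2$, giving the form $r+3(N+m\omega^2)$.

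The key computational point is to check that the unit orbit of any element prime to $3$ meets the set $\{x\in\calO_K : x\equiv 1\text{ or }2\bmod 3\calO_K\}$, and to verify that the representation is consistent with the basis choice $\{1,\omega^2\}$. Concretely, I would reduce the six unit multiples $\{\pm u,\pm\omega u,\pm\omega^2 u\}$ of a fixed generator $u$ modulo $3\calO_K$ and observe that the $\pm1$ factor alone suffices to send the constant term into $\{1,2\}$ after the $\bbZ$-basis is fixed, while the factors $\pm\omega,\pm\omega^2$ account for the choice among associate ideals. This is the analogue of the step $(a+bi)=(b-ai)$ used in Lemma~\ref{lem:idealform}, where multiplication by the unit $i$ swapped the two congruence classes of a generator.

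The main obstacle I anticipate is purely bookkeeping rather than conceptual: one must keep careful track of how the six units act on residues modulo $3\calO_K=(1-\omega)^2$, and confirm that the coordinate expansion in the basis $\{1,\omega^2\}$ genuinely yields integer $N,m$ for every admissible generator, with no residue class escaping the normal form. Since $3\calO_K$ is not prime but a square of the prime $(1-\omega)$, the quotient ring $\calO_K/3\calO_K$ has nine elements and a nontrivial unit structure, so I would want to enumerate its units explicitly and check that the unit action of $\calO_K^\times$ on them is transitive enough to reach the target residues $\{1,2\}$. Once this finite verification is complete, the proof concludes immediately by rewriting the normalized generator in the claimed form, exactly paralleling the argument for $\bbQ(i)$.
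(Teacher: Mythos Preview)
Your proposal is correct and follows essentially the same approach as the paper, which simply refers back to the proof of Lemma~\ref{lem:idealform}. The finite verification you anticipate becomes immediate once you observe that the reduction map $\calO_K^\times \to (\calO_K/3\calO_K)^\times$ is a bijection of groups of order six, so every generator prime to $3$ can be unit-normalized to lie in $\{1,2\}\bmod 3\calO_K$.
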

\begin{proof}
A proof is the same as Lemma \ref{lem:idealform}.
\end{proof}

Let $\Theta'(z)$ be the theta series
\begin{align*}
\Theta'(z)=\sum_{\lambda\in \calO_K}q^{N\lambda}=\sum_{n, m}q^{n^2+nm+m^2}\in M_1(\Gamma_1(3)).
\end{align*}

\begin{prop}
We have
\begin{align}
L(\psi'^{2k-1}, k)=\dfrac{(-1)^{k-1}2^{k-1}3^{-k/2-2}\varpi^k}{(k-1)!}\omega^{k-1}(1-\omega)\partial_1^{(k-1)}\Theta'(z)|_{z=(\omega-2)/3}.
\end{align}
\end{prop}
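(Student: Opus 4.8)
The target formula for $L(\psi'^{2k-1}, k)$ is the exact analogue, for the curve $A_1$ over $K = \bbQ(\omega)$, of the formula for $L(\psi^{2k-1}, k)$ just proved for $E_1$ over $\bbQ(i)$. So the plan is to mirror the proof of the preceding proposition step by step, substituting the arithmetic of $\bbQ(\sqrt{-3})$ for that of $\bbQ(i)$. First I would form the weight-$1$ Eisenstein series attached to the sextic character $\varepsilon'$,
\begin{align*}
G_{1, \varepsilon'}(z) = \lim_{s\to 0}\dfrac{1}{2}\sideset{}{^{'}}\sum_{n, m}\dfrac{\varepsilon'(n)}{(3mz+n)\absolute{3mz+n}^{2s}},
\end{align*}
and apply Proposition \ref{prop:MSEisenstein} termwise to compute $\partial_1^{(k-1)}G_{1, \varepsilon'}(z)$, exactly as in \eqref{eq:forevenkcorollary}. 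Since $\dim M_1(\Gamma_1(3)) = 1$, the series $G_{1,\varepsilon'}$ must be a scalar multiple of $\Theta'(z)$; the scalar is pinned down by comparing constant terms (or $q$-expansions) and will account for the factor $3^{-k/2-2}$ and the power of $\varpi$.

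**The Dirichlet series unfolding.** Next I would expand $L(\psi'^{2k-1}, k)$ as a sum over integral ideals prime to $3$ and use the preceding lemma to parametrize each such ideal as $\fraka = (r + 3(N + m\omega^2))$ with $r \in \{1, 2\}$. Writing $\psi'(\fraka) = \varepsilon'(\alpha)\alpha$ with $\alpha$ the primary generator, the $(2k-1)$-st power produces $\varepsilon'(\alpha)\alpha^{2k-1}/|\alpha|^{2k}$, which after the change of variables becomes the lattice sum appearing in $\partial_1^{(k-1)}G_{1,\varepsilon'}$ evaluated at the CM point. The CM point should be $z = (\omega - 2)/3$: this is the analogue of $z = i/2$ in the $E_1$ case, chosen so that the quadratic form $N(n + m\tau)$ at the evaluation point reproduces the norm form $n^2 + nm + m^2$ governing $\Theta'$. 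I would verify that $(\omega-2)/3$ indeed lies in $\bbH$ and lands on the correct sublattice coset. The sextic character $\varepsilon'$ introduces a root of unity $\omega^{k-1}$ and a factor $(1-\omega)$ in front — these arise from the Gauss-sum-type normalization of $\varepsilon'$ over $\calO_K/3$ and from summing over the two residue classes $r \in \{1, 2\}$, and tracking them carefully is where the bookkeeping differs most from the $\bbQ(i)$ computation.

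**The main obstacle.** The substantive work, and the step I expect to be the genuine obstacle, is getting the constant and the root-of-unity factors $\omega^{k-1}(1-\omega)$ exactly right. In the $E_1$ case the character $\varepsilon$ is merely quadratic (values $\pm 1$) and the theta identity $2\Theta = \Theta(z/2) + \Theta((z+1)/2)$ cleanly splits the sum; here $\varepsilon'$ is sextic, so its values are genuine sixth roots of unity and the analogous decomposition of $\Theta'$ into pieces indexed by $r \in \{1,2\}$ will carry nontrivial phases. Ensuring that the normalization factor $\varpi^k / (k-1)!$, the power $2^{k-1}$, the half-integral power $3^{-k/2-2}$, and the phase $\omega^{k-1}(1-\omega)$ all combine correctly requires matching the CM evaluation against the Hecke $L$-value normalization implicit in the definition of $L_{A,k}$. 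I would fix these constants by computing the leading term for a small value of $k$ (say $k=1$, where the sum is the classical $L(\psi', 1)$ and the period $\Omega_A = \Gamma(1/3)^3/(2\varpi\sqrt{3})$ appears explicitly) and propagating the normalization through the Maass-Shimura recursion. Everything else — the termwise application of Proposition \ref{prop:MSEisenstein}, the one-dimensionality of $M_1(\Gamma_1(3))$, and the unfolding of the ideal sum — is routine and parallels the $E_1$ argument verbatim.
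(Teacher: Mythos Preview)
Your plan follows the $E_1$ template correctly through the Eisenstein series step and the one-dimensionality of $M_1(\Gamma_1(3))$, but it misidentifies the CM point for the direct evaluation. Unfolding the ideal sum via the preceding lemma gives
\[
L(\psi'^{2k-1}, k)=\dfrac{1}{2}\sideset{}{^{'}}\sum_{n, m}\dfrac{\varepsilon'(n)(n+3m\omega^2)^{2k-1}}{|n+3m\omega|^{2k}},
\]
and comparing with Proposition~\ref{prop:MSEisenstein} shows this is a constant times $\partial_1^{(k-1)}\Theta'$ evaluated at $z=\omega$, \emph{not} at $(\omega-2)/3$. At $z=\omega$ the constant comes out as $(-1)^{k-1}2^{k-1}3^{k/2-2}\varpi^k/(k-1)!$, with no factor $\omega^{k-1}(1-\omega)$ present. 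This is the true analogue of the evaluation at $z=i/2$ in the $E_1$ argument.

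The passage from $z=\omega$ to $z=(\omega-2)/3$ in the paper is a separate step you have not anticipated: it uses the Atkin--Lehner involution $W_3=\bigl(\begin{smallmatrix}0&-1/\sqrt{3}\\ \sqrt{3}&0\end{smallmatrix}\bigr)$, for which $\Theta'|[W_3]_1=-i\Theta'$. Combined with \eqref{eq:compatiblity} this yields
\[
\partial_1^{(k-1)}\Theta'(z)=i(\sqrt{3}\,z)^{1-2k}\,\partial_1^{(k-1)}\Theta'(-1/(3z)),
\]
and substituting $z=\omega$ (noting that $-1/(3\omega)=(1+\omega)/3$ differs from $(\omega-2)/3$ by an integer) produces precisely the extra factor $i(\sqrt{3}\,\omega)^{1-2k}=3^{-k+1/2}\,\omega^{k-1}(1-\omega)$. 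This is what converts $3^{k/2-2}$ into $3^{-k/2-2}$ and supplies the phase. So the root-of-unity factors do not come from a Gauss-sum normalization of $\varepsilon'$ or from summing over $r\in\{1,2\}$ as you conjecture; they come from the Atkin--Lehner functional equation, and that is the ingredient your plan is missing.
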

\begin{proof}
Similarly for the case $E_{p}$, we obtain
\begin{align}
L(\psi'^{2k-1}, k)&=\dfrac{1}{2}\sideset{}{^{'}}\sum_{n, m}\dfrac{\varepsilon'(n)(n+3m\omega^2)^{2k-1}}{|n+3m\omega|^{2k}}\\
&=\dfrac{(-1)^{k-1}2^{k-1}3^{k/2-2}\varpi^k}{(k-1)!}\partial_1^{(k-1)}\Theta'(z)|_{z=\omega}.
\end{align}
For the Atkin-Lehner involution $W_3=\begin{pmatrix} 0 & -1/\sqrt{3} \\ \sqrt{3} & 0 \end{pmatrix}$, we have $\Theta'(z)|[W_3]_1=-i\Theta'(z)$ (\cite[p.155]{Kohler}). By \eqref{eq:compatiblity}, we have
\begin{align}
\partial_1^{(k-1)}\Theta'(z)=i(\sqrt{3}z)^{-2k+1}\partial_1^{(k-1)}\Theta'(z)|_{z=-1/3z}.
\end{align}
The proposition follows by substituting $z=\omega$.
\end{proof}

By Proposition \ref{prop:MSq}, it holds that
\begin{align}
\partial_1^{(k-1)}&\Theta(z)|_{z=(\omega-2)/3}\nonumber\\
=&\dfrac{(-1)^{k-1}\sqrt{3}^{k-1}(k-1)!}{2^{k-1}\varpi^{k-1}}\sum_{n, m\in \bbZ}L_{k-1}^0(2\varpi Q_\omega(n, m))e^{2\varpi i(n^2+nm+m^2)(\omega-2)/3}.
\end{align}
We set
\begin{align}
a_{n, m}:=L_{k-1}^0(2\varpi Q_\omega(n, m))e^{2\varpi i(n^2+nm+m^2)(\omega-2)/3}.
\end{align}

\begin{lem}\label{lem:mainlem}
For $h, N\in \bbZ_{\geq 0}$, the following holds.
\begin{itemize}
\item[$(1)$] $\partial_{1/2}^{(h)}\left.\theta \character{1/3}{-1/3}(0, z)\right|_{z=\omega}~=e^{h\varpi i/3-\varpi i/4}3^{1/4}\left.\partial_{1/2}^{(h)}\eta(z)\right|_{z=\omega}$,\\
\item[$(2)$] $\partial_{3/2}^{(h)}\left.\dfrac{1}{2\varpi i}\theta'\character{1}{1}(z)\right|_{z=\omega}=e^{\varpi i/2}\partial_{3/2}^{(h)}\eta(z)^3|_{z=\omega}$,\\
\item[$(3)$] $\partial_{3/2}^{(3N+1)}\left.\dfrac{1}{2\varpi i}\theta'\character{1/3}{-1/3}(z)\right|_{z=\omega}=e^{N\varpi i-13\varpi i/36}2^{-1}3^{5/4}\partial_{3/2}^{(3N+1)}\eta(3z)^3|_{z=\omega}$.
\end{itemize}
\end{lem}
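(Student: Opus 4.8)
The plan is to prove all three identities by a single device: express each left-hand theta object as a constant multiple of the appropriate power of $\eta$, slashed by one element $\gamma$ that stabilizes the CM point $z=\omega$, and then transport the Maass-Shimura derivative across the slash by the compatibility \eqref{eq:compatiblity}. Once the derivative sits on $\eta$, evaluation at the fixed point $\omega$ collapses the slash to multiplication by the automorphy factor $j(\gamma,\omega)^{-(k+2h)}$, which is a root of unity; its weight-shift part $j(\gamma,\omega)^{-2h}$ is exactly what produces the $h$-dependent phases $e^{h\varpi i/3}$ in $(1)$ and $e^{N\varpi i}$ in $(3)$, while the trivial-$\gamma$ case gives the $h$-independent phase of $(2)$.

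I would do $(2)$ first, since there the relating element is trivial. Jacobi's derivative formula, in the normalization of \eqref{eq:FKtheta2}, reads $\frac{1}{2\varpi i}\theta'\character{1}{1}(0,z)=e^{\varpi i/2}\eta(z)^3$, a genuine equality of $q$-expansions of weight-$3/2$ forms. Applying $\partial_{3/2}^{(h)}$ to both sides and using its $\bbC$-linearity gives $\partial_{3/2}^{(h)}\bigl(\frac{1}{2\varpi i}\theta'\character{1}{1}(0,z)\bigr)=e^{\varpi i/2}\partial_{3/2}^{(h)}\eta(z)^3$; specializing $z=\omega$ yields $(2)$ with no $h$-dependence, precisely because no nontrivial $\gamma$, and hence no automorphy factor, intervenes.

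For $(1)$ and $(3)$ I would first pin down the global theta-to-eta identity and the element $\gamma$. Euler's pentagonal identity gives $\eta(z)=\sum_{n}(-1)^n e^{3\varpi i z(n+1/6)^2}$, which exhibits $\eta$ as a theta-nullwert with a characteristic of denominator three but in the variable $3z$; matching this against $\theta\character{1/3}{-1/3}(0,z)$, and, via Jacobi's formula, matching $\frac{1}{2\varpi i}\theta'\character{1/3}{-1/3}(0,z)$ against $\eta(3z)^3$, forces $\gamma$ to be an element of $GL_2^+(\bbQ)$ combining the scaling $z\mapsto 3z$ (the source of the factors $3^{1/4}$ and $3^{5/4}$) with an element of the order-six stabilizer of $\omega$. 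With $\gamma$ fixed I would run the same transport-and-evaluate step as in $(2)$: commute $\partial_{1/2}^{(h)}$, respectively $\partial_{3/2}^{(3N+1)}$, past the slash by \eqref{eq:compatiblity} and set $z=\omega$, reading off $j(\gamma,\omega)^{-2h}=e^{h\varpi i/3}$, respectively the analogous sixth-root power giving $e^{N\varpi i}$. The restriction to exponents of the form $3N+1$ in $(3)$ is explained at this stage: the level-three multiplier attached to $\eta(3z)^3$ under $\gamma$ is periodic of period three in the weight, so only the residue class $h\equiv 1\bmod 3$ produces the clean identity stated.

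The main obstacle is the bookkeeping of the half-integral-weight multiplier system: the metaplectic cocycle contributes eighth-root-of-unity phases, the eta multiplier contributes twenty-fourth roots, and their cubes together with the level-three scaling assemble into the delicate constants $e^{-\varpi i/4}3^{1/4}$ and $e^{-13\varpi i/36}2^{-1}3^{5/4}$. Getting these phases and the branch of $j(\gamma,\omega)^{-1/2}$ exactly right, rather than merely up to an unspecified root of unity, is the one genuinely delicate point; all the analytic content is already contained in \eqref{eq:compatiblity}, Jacobi's derivative formula, and the pentagonal expansion of $\eta$. I expect the remaining computation to be routine once $\gamma$ and its multiplier are normalized, which is best verified by comparing the leading case $h=0$ (resp.\ $N=0$) against the classical special values of $\eta$ and of the theta-nullwerte at $z=\omega$.
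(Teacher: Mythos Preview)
Your treatment of $(2)$ is correct and identical to the paper's: Jacobi's identity gives $\frac{1}{2\varpi i}\theta'\character{1}{1}(0,z)=e^{\varpi i/2}\eta(z)^3$, and applying $\partial_{3/2}^{(h)}$ finishes. Your plan for $(1)$ is also essentially the paper's: one has the global identity $\theta\character{1/3}{-1/3}(0,z)=e^{-\varpi i/36}\eta\skakko{(z-1)/3}$, a single slashed copy of $\eta$, and then \eqref{eq:compatiblity} together with the $\eta$ transformation law at $\omega$ produces the factor $e^{h\varpi i/3}$ and the $3^{1/4}$.

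For $(3)$ there is a genuine gap. Your scheme requires a global identity of the shape $\frac{1}{2\varpi i}\theta'\character{1/3}{-1/3}(0,z)=c\cdot\bigl(\eta(3\,\cdot)^3\big|[\gamma]_{3/2}\bigr)(z)$ for a single $\gamma$ fixing $\omega$, but no such identity exists. The relevant Farkas--Kra relation is
\begin{align}
6e^{\varpi i/3}\theta'\character{1/3}{1}(0,3z)=\theta'\character{1}{1}(0,z/3)+3\theta'\character{1}{1}(0,3z)=-2\varpi\bigl(\eta(z/3)^3+3\eta(3z)^3\bigr),
\end{align}
which expresses the theta derivative as a genuine \emph{two}-term combination of linearly independent weight-$3/2$ forms; it cannot be collapsed to one slashed copy of $\eta(3z)^3$. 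Your proposed explanation of the restriction to $h=3N+1$ (``periodicity of the multiplier in the weight'') is accordingly off the mark: if a single-$\gamma$ identity were available, transport-and-evaluate would yield an identity at $\omega$ for \emph{every} $h$, with only the root-of-unity phase $j(\gamma,\omega)^{-2h}$ varying. The actual reason $(3)$ holds only for $h=3N+1$, and the ingredient missing from your outline, is the vanishing
\begin{align}
\partial_{3/2}^{(3N+1)}\eta(z)^3\big|_{z=\omega}=0,
\end{align}
which the paper establishes first via \eqref{eq:compatiblity} applied to an order-six element of $SL_2(\bbZ)$ stabilizing $\omega$. After applying $\partial_{3/2}^{(3N+1)}$ to the two-term identity above and evaluating at $\omega$, this vanishing kills one of the two summands, and only then does the surviving $\eta(3z)^3$ term give $(3)$. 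Without this vanishing step your device cannot isolate $\eta(3z)^3$, and the argument for $(3)$ does not go through.
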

\begin{proof}
(1)~By using identity (\cite[p. 241]{FarkasKra})
\begin{align}
\theta\character{1/3}{1}(0, z)=e^{\varpi i/6}\eta(z),
\end{align}
we have
\begin{align}
\theta\character{1/3}{-1/3}(0, z)=e^{-7\varpi i/36}\theta\character{1/3}{1}(0, z)=e^{-\varpi i/36}\eta\skakko{\dfrac{z-1}{3}}
\end{align}
It follows from this and \eqref{eq:compatiblity}.

(2) It follows from the identity \cite[p.289, (4.14)]{FarkasKra}
\begin{align}
\theta'\character{1}{1}(0, \tau)=-2\varpi \eta(\tau)^3.\label{eq:etaidentity}
\end{align}

(3)~By \eqref{eq:compatiblity}, we have
\begin{align}
\partial_{3/2}^{(3N+1)}\eta(z)^3|_{z=\omega}=0.\label{eq:partialetazero}
\end{align}
It follows from \eqref{eq:etaidentity}, \eqref{eq:partialetazero} and the identity \cite[p.240, (3.40)]{FarkasKra}
\begin{align}
6e^{\varpi i/3}\theta'\character{1/3}{1}(0, 3z)&=\theta'\character{1}{1}(0, z/3)+3\theta'\character{1}{1}(0, 3z).
\end{align}
\end{proof}

\begin{thm}\label{thm:mainthm4}
Let $\psi'$ be the Hecke character of $K=\bbQ(\omega)$ associated to $A_1: x^3+y^3=1$. Then for $L(\psi'^{2k-1}, s)$, we have
\begin{align}
L(\psi'^{2k-1}, k)=\begin{cases}
\dfrac{\varpi^k}{(k-1)!}2^{2k-1}3^{k/2-9/4}\absolute{\partial_{1/2}^{(3N)}\eta(z)|_{z=\omega}}^2 & (k=6N+1),\\
\dfrac{\varpi^k}{(k-1)!}2^{2k-3}3^{k/2-11/4}\absolute{\partial_{3/2}^{(3N+1)}\eta(z)^3|_{z=\omega}}^2  & (k=6N+2),\\
\dfrac{\varpi^k}{(k-1)!}2^{2k-4}3^{k/2-1/4}\absolute{\partial_{3/2}^{(3N+1)}\eta(3z)^3|_{z=\omega}}^2 & (k=6N+4),\\
0 & ({\rm otherwise}).
\end{cases}
\end{align}
\end{thm}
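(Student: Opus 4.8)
The plan is to run the same three-step machine that proves Theorem \ref{thm:mainthm3}, but over $K=\bbQ(\omega)$ with the ramified prime $3$ and the sextic character $\varepsilon'$ playing the roles of the prime $2$ and the quartic data of the $\bbQ(i)$ case. I would begin from the two inputs already assembled above: the expression
\[
L(\psi'^{2k-1},k)=\frac{(-1)^{k-1}2^{k-1}3^{-k/2-2}\varpi^{k}}{(k-1)!}\,\omega^{k-1}(1-\omega)\,\partial_1^{(k-1)}\Theta'(z)\big|_{z=(\omega-2)/3},
\]
together with the Laguerre expansion of $\partial_1^{(k-1)}\Theta'|_{z=(\omega-2)/3}$ into $\sum_{n,m}a_{n,m}$ coming from Proposition \ref{prop:MSq}, where $a_{n,m}=L_{k-1}^{0}(2\varpi Q_\omega(n,m))\,e^{2\varpi i(n^2+nm+m^2)(\omega-2)/3}$. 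The key new feature is the phase $e^{2\varpi i(n^2+nm+m^2)(\omega-2)/3}$: since $n^2+nm+m^2\equiv(n-m)^2\bmod 3$, it is a cube root of unity depending only on $(n,m)\bmod 3$, and this is exactly the residue-class decomposition that replaces the mod-$2$ cosets used for $E_p$.

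Next I would feed this into Theorem \ref{thm:FF} at the CM point (with $\alpha=0$ and $p=k-1$), choosing the characteristics $(\mu,\nu)$ to reproduce the mod-$3$ phase: in the $E_p$ proof the two choices $(1/2,0)$ and $(0,1/2)$ rebuilt $\partial_1^{(k-1)}\Theta|_{z=i/4}+\partial_1^{(k-1)}\Theta|_{z=i/4+1/2}$ out of $\absolute{\theta_{(k-1)}[\cdots]}^2$, and here the analogous combination uses characteristics involving $1/3$. This rewrites $\partial_1^{(k-1)}\Theta'|_{z=(\omega-2)/3}$ as a combination of theta constants $\theta_{(k-1)}\character{\mu}{\nu}(\omega)$. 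Proposition \ref{prop:partialtheta} then converts these into $\partial_{1/2}^{(h)}$ (when $k-1=2h$ is even, i.e.\ $k$ odd) or $\partial_{3/2}^{(h)}$ (when $k-1=2h+1$ is odd, i.e.\ $k$ even) of the classical theta constants $\theta\character{2\mu}{2\nu}(0,z)$ and $\theta'\character{2\mu}{2\nu}(0,z)$, and finally Lemma \ref{lem:mainlem} turns those into $\partial$-derivatives of $\eta(z)$ (part (1), the $k\equiv1$ case), $\eta(z)^3$ (part (2)) and $\eta(3z)^3$ (part (3), the $k\equiv 4$ case). Matching the index $h$ against $k\bmod 6$ and collecting the constants and roots of unity produced by each of these three reductions yields the three displayed nonzero formulas.

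The vanishing of $L(\psi'^{2k-1},k)$ for $k\equiv 0,3,5\bmod 6$ I would deduce from the symmetry of $\Theta'$ under the Atkin--Lehner involution $W_3$ (which already gave $\Theta'|[W_3]_1=-i\Theta'$ in the proof of the preceding Proposition): combined with the compatibility \eqref{eq:compatiblity}, this forces the relevant $\partial$-derivative of the weight $1/2$ or $3/2$ $\eta$-form to vanish at the CM point $z=\omega$, exactly as in the Corollary following Theorem \ref{thm:mainthm3} and as recorded in \eqref{eq:partialetazero}. The main obstacle will be the bookkeeping: correctly choosing the characteristics $(\mu,\nu)$ and the linear combination of the identities of Theorem \ref{thm:FF} that reconstitutes $\partial_1^{(k-1)}\Theta'$ from the mod-$3$ pieces, and then tracking the profusion of explicit constants, powers of $3$, and sixth roots of unity through Proposition \ref{prop:partialtheta} and the three parts of Lemma \ref{lem:mainlem} so that the normalizations in the four cases come out precisely as stated; pinning down which residues of $k$ force the CM-vanishing is the conceptual heart, while the rest is careful (but routine) constant-chasing.
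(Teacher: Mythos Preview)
Your plan is essentially the paper's proof, and the three-step machine (Theorem~\ref{thm:FF} $\to$ Proposition~\ref{prop:partialtheta} $\to$ Lemma~\ref{lem:mainlem}) is exactly right. Two points need correcting.

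First, the characteristics and the coset decomposition. The phase observation $n^2+nm+m^2\equiv(n-m)^2\bmod 3$ is correct, but to match the left side of Theorem~\ref{thm:FF} you need \emph{three} specializations, namely $(\mu,\nu)=(1/2,1/2)$, $(1/6,-1/6)$, $(-1/6,1/6)$, and the resulting sums are split by $n-m\bmod 6$, not $\bmod 3$. (The phase $e^{2\varpi i(n\mu+m\nu)}$ for $(\mu,\nu)=(1/6,-1/6)$ is a sixth root of unity in $n-m$.) Adding the three identities, together with the symmetry $\absolute{\theta_{(p)}\character{\mu}{-\nu}}^2=\absolute{\theta_{(p)}\character{-\nu}{\mu}}^2$, yields
\[
L(\psi'^{2k-1},k)=\frac{2^{-k+1}3^{k/2-11/4}\varpi^{k}}{(k-1)!}\Bigl\{\omega^{k+1}\absolute{\theta_{(k-1)}\character{1/2}{1/2}(\omega)}^2+2\,\omega^{k-1}\absolute{\theta_{(k-1)}\character{1/6}{-1/6}(\omega)}^2\Bigr\}.
\]

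Second, the vanishing mechanism. The paper does \emph{not} use the Atkin--Lehner involution $W_3$ on $\Theta'$ to kill the cases $k\equiv 0,3,5\bmod 6$; $\omega$ is not a fixed point of $W_3$, so that route does not close. Instead, once you have the displayed formula above, the fact that $L(\psi'^{2k-1},k)\in\bbR$ forces the case split according to $k\bmod 3$ (the coefficients $\omega^{k\pm1}$ are genuine cube roots of unity), and this together with the identical vanishing $\theta\character{1}{1}(0,z)=0$ disposes of $k\equiv 0,3,5\bmod 6$. The identity \eqref{eq:partialetazero} you cite is used only inside the proof of Lemma~\ref{lem:mainlem}(3), not for the vanishing of $L$ itself. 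After that, Proposition~\ref{prop:partialtheta} and the three parts of Lemma~\ref{lem:mainlem} convert the surviving $\theta_{(k-1)}$ into the stated $\eta$-derivatives exactly as you outline; the constant-chasing is indeed the bulk of the remaining work.
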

\begin{proof}
We apply for $p=k-1,\, a=1,\,\alpha=0,\, z=\omega$ in Theorem \ref{thm:FF}. By substituting $(\mu, \nu)=(1/2, 1/2)$ with multiplication $\omega^2$, $(\mu, \nu)=(1/6, -1/6)$ and $(-1/6, 1/6)$, we see that
\begin{align}
&\dfrac{2^{k-1}(k-1)!}{\sqrt{3}^{k-1}\varpi^{k-1}}\left(\sum_{n-m\equiv 1, 2, 4, 5}a_{n, m}+\sum_{n-m\equiv 0, 3}a_{n, m} \right)=\omega^2\sqrt[4]{3}\left|\theta_{(k-1)}\left[\begin{array}{c}1/2 \\ 1/2 \end{array}\right](\omega) \right|^2,\label{eq:FF3}\\
&\dfrac{2^{k-1}(k-1)!}{\sqrt{3}^{k-1}\varpi^{k-1}}\left(\sum_{n-m\equiv 0, 1, 3, 4}a_{n, m}+\sum_{n-m\equiv 2, 5}a_{n, m} \right)=\sqrt[4]{3}\left|\theta_{(k-1)}\left[\begin{array}{c}1/6 \\ -1/6 \end{array}\right](\omega) \right|^2,\label{eq:FF4}\\
&\dfrac{2^{k-1}(k-1)!}{\sqrt{3}^{k-1}\varpi^{k-1}}\left(\sum_{n-m\equiv 0, 2, 3, 5}a_{n, m}+\sum_{n-m\equiv 1, 4}a_{n, m} \right)=\sqrt[4]{3}\left|\theta_{(k-1)}\left[\begin{array}{c}-1/6 \\ 1/6 \end{array}\right](\omega) \right|^2,\label{eq:FF5}
\end{align}
where $\sum_{n-m\equiv a}$ implies that $(n, m)$ runs over all pairs of integer which satisfy $n-m\equiv a\bmod 6$. Note that
\begin{align}
\left|\theta_{(p)}\left[\begin{array}{c}\mu \\ -\nu \end{array}\right](z) \right|^2=\left|\theta_{(p)}\left[\begin{array}{c}-\nu \\ \mu \end{array}\right](z) \right|^2.
\end{align}
By adding \eqref{eq:FF3}, \eqref{eq:FF4} and \eqref{eq:FF5}, we obtain
\begin{align}
L(\psi'^{2k-1}, k)=\dfrac{2^{-k+1}3^{k/2-11/4}\varpi^k}{(k-1)!}\mkakko{\omega^{k+1}\absolute{\theta_{(k-1)}\character{1/2}{1/2}(\omega)}^2+2\omega^{k-1}\absolute{\theta_{(k-1)}\character{1/6}{-1/6}(\omega)}^2}.
\end{align}
Since $L(\psi'^{2k-1}, k )$ takes a real number, it holds that
\begin{align}
L(\psi'^{2k-1}, k)=\begin{cases}
0 & (k\equiv 0, 3\mod 6),\\
\dfrac{2^{-k+2}3^{k/2-11/4}\varpi^k}{(k-1)!}\absolute{\theta_{(k-1)}\character{1/6}{1/6}(\omega)}^2 & (k\equiv 1, 4\mod 6),\\
\dfrac{2^{-k+1}3^{k/2-11/4}\varpi^k}{(k-1)!}\absolute{\theta_{(k-1)}\character{1/2}{1/2}(\omega)}^2 & (k\equiv 2, 5\mod 6).
\end{cases}
\end{align}
Since 
\begin{align}
\theta\character{1}{1}(0,z)=0,
\end{align}
the theorem follows by Proposition \ref{prop:partialtheta} and Lemma \ref{lem:mainlem}.
\end{proof}

\begin{cor}
Under the same condition as Theorem \ref{thm:mainthm4}, we have 
\begin{align}
L(\psi'^{2k-1}, k)\geq 0.
\end{align}
\end{cor}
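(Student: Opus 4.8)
The plan is to read off the conclusion directly from the explicit evaluation in Theorem \ref{thm:mainthm4}. That theorem expresses $L(\psi'^{2k-1}, k)$, in each of the four residue classes of $k$ modulo $6$, either as $0$ or as a product of an explicit constant and a factor of the form $\absolute{w}^2$, where $w\in\bbC$ is one of the special values $\partial_{1/2}^{(3N)}\eta(z)|_{z=\omega}$, $\partial_{3/2}^{(3N+1)}\eta(z)^3|_{z=\omega}$, or $\partial_{3/2}^{(3N+1)}\eta(3z)^3|_{z=\omega}$. Since $\absolute{w}^2\geq 0$ for any complex number $w$, it suffices to verify that the accompanying constants are positive real numbers.

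First I would observe that each prefactor has the shape $\varpi^k 2^{a}3^{b}/(k-1)!$ with $a,b\in\tfrac{1}{4}\bbZ$. Because $\varpi=\pi>0$, the factorial $(k-1)!$ is a positive integer, and real powers of the positive bases $2$ and $3$ are again positive reals, every such prefactor is a strictly positive real number. Hence in the three nonzero cases $L(\psi'^{2k-1}, k)$ is a nonnegative real times a positive real, so $L(\psi'^{2k-1}, k)\geq 0$; in the remaining case it equals $0\geq 0$. This exhausts all residues of $k$ modulo $6$ and completes the argument.

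There is no genuine obstacle here: the substantive content lies entirely in Theorem \ref{thm:mainthm4}, whose proof rewrites the (a priori complex) special value as a sum of squared theta-value moduli by means of the Fourier--Jacobi identity of Theorem \ref{thm:FF}. The present corollary is merely the formal consequence that such a sum, weighted by positive constants, is nonnegative, exactly parallel to the corollary following Theorem \ref{thm:mainthm3} for the curve $E_1$.
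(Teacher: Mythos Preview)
Your proposal is correct and matches the paper's approach: the corollary is stated without proof in the paper precisely because it is immediate from the case-by-case formula of Theorem~\ref{thm:mainthm4}, each nonzero branch being a positive real constant times $|w|^2\geq 0$. The only quibble is cosmetic: there are six residue classes modulo $6$, grouped into four cases in the theorem; your phrase ``four residue classes of $k$ modulo $6$'' should read ``four cases'' or ``six residue classes, treated in four cases.''
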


\section{Recurrence formula}

In this section, we also denote $\varpi=3.1415\cdots$ by the real number. The Maass-Shimura operator does not map a modular form to a modular form in general, but the Ramanujan-Serre operator
\begin{align}
\vartheta_k=D-\dfrac{k}{12}E_2
\end{align}
does, where $E_2(z)=1-24\sum_{n=1}^\infty \sigma_1(n)q^n$ is the Eisenstein series of weight $2$. This Eisenstein series is not a modular form, but the function $E_2^\ast(z)=E_2(z)-3/\varpi y$ is non-holomorphic modular form. Since the Ramanujan-Serre operator is also expressed as $\vartheta_k=\partial_k-kE_2^\ast/12$, we see that $\vartheta_k$ maps a modular form of weight $k$ to a modular form of weight $k+2$. To express the difference between $\partial_k$ and $\vartheta_k$, Villegas and Zagier have introduced the following series.
\begin{align}
&f_\partial(z, X):=\sum_{n=0}^\infty \dfrac{\partial_k^{(n)}f(z)}{k(k+1)\dots (k+n-1)}\dfrac{X^n}{n!} \quad(z\in \bbH,\,X\in\bbC,\,f\in M_k(\Gamma))\nonumber\\
&f_\vartheta(z, X):=e^{-E_2^\ast(z)X/12}f_\partial(z, X)\label{eq:recurrenceseries}
\end{align}

\begin{prop}[{\cite[p. 12]{VillegasZagier}}]\label{prop:recurrence}
Let $f\in M_k(\Gamma)$. Then the series $f_\vartheta(z, X)$ has the expansion
\begin{align}
f_\vartheta(z, X)=\sum_{n=0}^\infty \dfrac{F_n(z)}{k(k+1)\dots (k+n-1)}\dfrac{X^n}{n!},
\end{align}
where $F_n\in M_{k+2n}(\Gamma)$ is the modular form that is defined by the following recurrence formula.
\begin{align}
F_{n+1}=\vartheta_{k+2n} F_n-\dfrac{n(n+k-1)}{144}E_4F_{n-1}\label{eq:recurrence}
\end{align}
The initial condition is $F_0=f,\,F_1=\vartheta_k f$.
\end{prop}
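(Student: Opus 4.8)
The plan is to exploit the splitting $\partial_k=\vartheta_k+\tfrac{k}{12}E_2^\ast$, which is immediate from $E_2^\ast=E_2-3/(\varpi y)$, and to view the factor $e^{-E_2^\ast X/12}$ in \eqref{eq:recurrenceseries} as exactly the integrating factor that removes the non-holomorphic contributions of $E_2^\ast$. All manipulations can be carried out at the level of formal power series in $X$ with coefficients smooth in $z$. Concretely I would: (i) derive a linear partial differential equation in $X$ for $f_\partial$; (ii) conjugate it by $e^{-E_2^\ast X/12}$ to get an equation for $f_\vartheta$ in which every explicit power of $1/y$ cancels; (iii) compare coefficients of $X^n$ to obtain \eqref{eq:recurrence}; and (iv) run an induction to see that the $F_n$ are holomorphic modular forms.

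For (i), write $G_n=\partial_k^{(n)}f$ and $(k)_n=k(k+1)\cdots(k+n-1)$, so that $f_\partial=\sum_n G_n X^n/((k)_n n!)$. I would compute the series $\sum_n G_{n+1}X^n/((k)_n n!)$ in two ways. A direct index shift identifies it with $(k+X\partial_X)\,\partial_X f_\partial$; on the other hand, using $G_{n+1}=\partial_{k+2n}G_n=DG_n-\tfrac{k+2n}{4\varpi y}G_n$ and the fact that $k+2X\partial_X$ multiplies the $X^n$-coefficient by $k+2n$ (the weight of $G_n$), it equals $\bigl(D-\tfrac{k+2X\partial_X}{4\varpi y}\bigr)f_\partial$. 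Equating the two gives
\[
(k+X\partial_X)\,\partial_X f_\partial=\Bigl(D-\frac{k+2X\partial_X}{4\varpi y}\Bigr)f_\partial .
\]

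For (ii), substitute $f_\partial=e^{E_2^\ast X/12}f_\vartheta$ and divide through by $e^{E_2^\ast X/12}$. This is the step I expect to be the main obstacle: one must verify that all $1/y$ terms cancel. The relevant inputs are $1/(4\varpi y)=(E_2-E_2^\ast)/12$ and the derivative formula
\[
DE_2^\ast=\tfrac{1}{12}\bigl(2E_2E_2^\ast-(E_2^\ast)^2-E_4\bigr),\qquad\text{equivalently}\qquad \vartheta_2E_2^\ast=-\tfrac{1}{12}\bigl(E_4+(E_2^\ast)^2\bigr),
\]
which I would obtain from Ramanujan's $DE_2=(E_2^2-E_4)/12$ together with $D(3/(\varpi y))=(E_2-E_2^\ast)^2/12$. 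After collecting terms, every occurrence of $E_2^\ast$ disappears and, writing $u=f_\vartheta$, one is left with the holomorphic equation
\[
X\,\partial_X^2u+k\,\partial_X u+\frac{XE_2}{6}\,\partial_X u+\frac{XE_4}{144}\,u-\vartheta_k u=0 .
\]

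Finally, for (iii)--(iv), insert $u=\sum_n F_nX^n/((k)_n n!)$ and match coefficients of $X^n$. The pleasant point is that the pair $-\vartheta_k u+\tfrac{XE_2}{6}\partial_X u$ has $X^n$-coefficient $-\vartheta_{k+2n}F_n/((k)_n n!)$, so the weight reassembles into the Serre derivative of the correct weight $k+2n$; meanwhile $X\partial_X^2u+k\partial_X u$ contributes $F_{n+1}/((k)_n n!)$ and $\tfrac{XE_4}{144}u$ contributes $\tfrac{n(n+k-1)}{144}E_4F_{n-1}/((k)_n n!)$. Setting the sum to zero yields exactly \eqref{eq:recurrence}; the coefficient of $X^0$ forces $F_1=\vartheta_k f$, and $F_0=f$ is the constant term $f_\vartheta(z,0)=f_\partial(z,0)$. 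Since $F_0=f$ and $F_1=\vartheta_k f$ are holomorphic modular forms of weights $k$ and $k+2$, and since $\vartheta_{k+2n}$ maps $M_{k+2n}(\Gamma)$ to $M_{k+2n+2}(\Gamma)$ while multiplication by the holomorphic weight-$4$ form $E_4$ preserves holomorphy and raises the weight by $4$, induction on $n$ shows $F_n\in M_{k+2n}(\Gamma)$, both terms on the right of \eqref{eq:recurrence} having weight $k+2(n+1)$. That the recurrence and even the initial conditions drop out automatically is a reassuring check on the cancellation in step (ii).
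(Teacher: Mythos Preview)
The paper does not supply its own proof of this proposition; it is simply quoted from \cite[p.~12]{VillegasZagier}, so there is nothing in the present paper to compare against. Your argument is correct and self-contained: the PDE in step~(i) is derived correctly, the conjugation in step~(ii) produces exactly the claimed holomorphic equation (the identity $DE_2^\ast=\tfrac{1}{12}(2E_2E_2^\ast-(E_2^\ast)^2-E_4)$ is the right tool, and the cancellation $2ac-c^2-Dc=E_4/144$ with $a=E_2/12$, $c=E_2^\ast/12$ goes through as stated), and the coefficient comparison in step~(iii) together with the induction in step~(iv) recovers \eqref{eq:recurrence} and the modularity of the $F_n$. This is in fact the approach sketched in \cite{VillegasZagier}, so your proof is both correct and in line with the original source.
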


If a CM point $z_0$ satisfy $E_2^\ast(z_0)=0$, then $f_\partial(z_0, X)=f_\vartheta(z_0, X)$ by \eqref{eq:recurrenceseries}. Therefore by Proposition \ref{prop:recurrence}, we see that 
\begin{align}
\partial_k^{(n)}f(z)|_{z=z_0}=F_n(z_0),
\end{align}
where $F_n$ is the modular form that is defined by the recurrence formula \eqref{eq:recurrence}.

We apply Proposition \ref{prop:recurrence} for $f=\theta_2, \Gamma=\Gamma(2)$. The graded ring $\oplus_{k\in \frac{1}{2}\bbZ}M_k(\Gamma(2))$ is isomorphic to $\bbC[\theta_2, \theta_4]$ as $\bbC$-algebra ({\it cf}. \cite[p.28-29]{Zagier123}). Since $\theta_2$ and $\theta_4$ is algebraically independent over $\bbC$, we sometimes regard $\theta_2$ and $\theta_4$ as indeterminates and $\bbC[\theta_2, \theta_4]$ as the polynomial ring in two variables over $\bbC$.

\begin{lem}\label{lem:mainlem5}
We have
\begin{align}
\vartheta \theta_2=\dfrac{1}{12}\theta_2\theta_4^4+\dfrac{1}{24}\theta_2^5, ~\vartheta\theta_4=-\dfrac{1}{12}\theta_2^4\theta_4-\dfrac{1}{24}\theta_4^5
\end{align}
\end{lem}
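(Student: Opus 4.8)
The goal is to compute the Ramanujan--Serre derivatives $\vartheta\theta_2$ and $\vartheta\theta_4$, each of which lands in $M_{5/2}(\Gamma(2))\cong \bbC[\theta_2,\theta_4]$ (the weight-$5/2$ homogeneous part), so each must be a cubic polynomial in $\theta_2,\theta_4$ of weight $5/2$; the five monomials of the right weight are $\theta_2^5,\theta_2^4\theta_4,\theta_2^3\theta_4^2,\dots,\theta_4^5$. Since $\vartheta_{1/2}=D-\tfrac{1}{24}E_2$ on weight $1/2$ forms, the plan is first to record the $q$-expansions (here $q=e^{\varpi iz}$, the natural variable for half-integral theta series) of $\theta_2,\theta_4$ and of $E_2$, then expand both sides to enough order in $q$ and match coefficients to pin down the unknown constants.

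\textbf{First steps.} I would write $\theta_2(z)=2q^{1/4}(1+q^{2}+q^{6}+\cdots)$ and $\theta_4(z)=1-2q+2q^{4}-2q^{9}+\cdots$, and note the classical Jacobi identity $\theta_2^4+\theta_4^4=\theta_3^4$ together with the product formulas, which let me reduce any needed symmetric combination. Applying $D=q\,\tfrac{d}{dq}$ (up to the correct normalization for the half-integral variable) term by term gives the $q$-expansions of $D\theta_2$ and $D\theta_4$. Then I subtract $\tfrac{1}{24}E_2\theta_2$ and $\tfrac{1}{24}E_2\theta_4$ respectively, using $E_2=1-24\sum\sigma_1(n)q^{n}$. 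The resulting $q$-series for $\vartheta\theta_2$ and $\vartheta\theta_4$ are then expressed in the monomial basis: one computes the $q$-expansions of the candidate monomials $\theta_2^5,\theta_2\theta_4^4,\dots$ and solves the finite linear system obtained by equating coefficients up to the first few powers of $q$. Because the space $M_{5/2}(\Gamma(2))$ is finite-dimensional, matching a bounded number of coefficients determines the answer uniquely, and I expect the claimed identities $\vartheta\theta_2=\tfrac{1}{12}\theta_2\theta_4^4+\tfrac{1}{24}\theta_2^5$ and $\vartheta\theta_4=-\tfrac{1}{12}\theta_2^4\theta_4-\tfrac{1}{24}\theta_4^5$ to fall out.

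\textbf{A cleaner alternative.} Rather than brute-force coefficient matching, I would prefer to deduce the formulas structurally. The Ramanujan--Serre operator preserves modularity, so $\vartheta\theta_2\in M_{5/2}(\Gamma(2))$; by the isomorphism with $\bbC[\theta_2,\theta_4]$ it is a linear combination of $\theta_2^5$ and $\theta_2\theta_4^4$ once one observes the correct parity/character under the generators of $\Gamma/\Gamma(2)$ (the action of $z\mapsto z+1$ and $z\mapsto -1/z$ permutes $\theta_2,\theta_3,\theta_4$ and rules out the mixed monomials $\theta_2^3\theta_4^2$, etc.). This cuts the unknowns to two constants for each of $\vartheta\theta_2$ and $\vartheta\theta_4$, which are then fixed by matching only the leading one or two $q$-coefficients. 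This is also essentially how Lemma~\ref{lem:mainlem5} will be used downstream: it is the input that turns $\partial_{1/2}^{(h)}$ (via $\vartheta$ and the vanishing of $E_2^\ast$ at the relevant CM point) into the polynomial recurrence of Theorem~\ref{thm:mainthm1}.

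\textbf{Main obstacle.} The only real subtlety is bookkeeping of normalizations: the half-integral weight forces the variable $q=e^{\varpi i z}$ (not $e^{2\varpi i z}$), so the operator $D$ and the weight in $\vartheta_k=D-\tfrac{k}{12}E_2$ must be tracked with the correct factor of $2$, and one must be careful that $E_2$ is still written in $e^{2\varpi iz}$. Getting these scalings consistent is what determines whether the coefficients come out as $\tfrac{1}{12}$ and $\tfrac{1}{24}$ rather than some rescaled values; the modular-algebra argument above is the safest way to avoid an error here, since the structural constraints leave almost no freedom and the leading coefficient of $\theta_2^5$ versus $\theta_2\theta_4^4$ (namely the $q^{5/4}$ versus $q^{1/4}$ terms) isolates each constant cleanly.
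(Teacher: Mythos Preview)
Your approach is correct and in the same spirit as the paper's: both rely on the fact that $\vartheta$ sends modular forms to modular forms and that the target space is a known finite-dimensional piece of $\bbC[\theta_2,\theta_4]$, so the identity is determined by matching finitely many $q$-coefficients. The paper's one-line proof, however, packages this more efficiently: instead of working directly in the half-integral weight space $M_{5/2}(\Gamma(2))$, it applies $\vartheta$ to the integral-weight forms $\theta_2^4,\theta_4^4\in M_2(\Gamma(2))$, lands in $M_4(\Gamma(2))$ (spanned by $\theta_2^8,\theta_2^4\theta_4^4,\theta_4^8$), and then uses the derivation property $\vartheta(\theta_2^4)=4\theta_2^3\,\vartheta\theta_2$ to read off $\vartheta\theta_2$.

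This buys exactly what you flagged as your ``main obstacle'': passing to fourth powers sidesteps the half-integral bookkeeping (no ambiguity about $q=e^{\varpi iz}$ versus $e^{2\varpi iz}$ in the operator $D$), and divisibility by $\theta_2^3$ forces the $\theta_4^8$ coefficient to vanish, so only the two monomials $\theta_2^5$ and $\theta_2\theta_4^4$ can appear --- the same reduction you obtained via the $\Gamma/\Gamma(2)$ symmetry argument in your ``cleaner alternative''. Your route works, but the paper's detour through weight $4$ is shorter and more robust against the normalization pitfalls you correctly identified.
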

\begin{proof}
It follows from the fact that $\vartheta \theta_2^4$ and $\vartheta \theta_4^4$ are of weight $4$ and the ring $M_4(\Gamma(2))$ is generated by $\theta_2^4, \theta_4^4$. 
\end{proof}

By the above lemma, the Ramanujan-Serre operator $\vartheta$ acts on $\bbC[\theta_2, \theta_4]$ as
\begin{align}
\vartheta=\skakko{\dfrac{1}{12}\theta_2\theta_4^4+\dfrac{1}{24}\theta_2^5}\dfrac{\partial}{\partial \theta_2}-\skakko{\dfrac{1}{12}\theta_2^4\theta_4+\dfrac{1}{24}\theta_4^5}\dfrac{\partial}{\partial \theta_4}.
\end{align}

\begin{thm}\label{thm:mainthm5}
We define the algebraic part of $L(\psi^{2k-1}, k)$ to be
\begin{align}
L_{E, k}=\dfrac{2^{k+1}3^{k-1}\varpi^{k-1}(k-1)!}{\Omega_{E}^{2k-1}}L(\psi^{2k-1}, k).
\end{align}
Then $L_{E, k}$ is a square of a rational integer and 
\begin{align}
\sqrt{L_{E, k}}=\begin{cases}
\absolute{f_N(0)} & (k=2N+1),\\
0 & (k=2N),
\end{cases}
\end{align}
where $f_n(t)\in\bbZ[t]$ is the polynomial that is defined by the recurrence formula
\begin{align}
f_{n+1}(t)=(4n+1)(2t+3)f_n(t)-12(t+1)(t+2)f_n'(t)-2n(2n-1)(t^2+3t+3)f_n(t)
\end{align}
The initial condition is $f_0(t)=1,\, f_1(t)=2t+3$.
\end{thm}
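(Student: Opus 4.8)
The plan is to feed the expression for $L(\psi^{2k-1},k)$ from Theorem \ref{thm:mainthm3} into the recurrence machinery of Proposition \ref{prop:recurrence}, evaluated at the CM point $z_0=i$. First I would check that $E_2^\ast$ vanishes at $i$: since $E_2^\ast$ is a weight-two (non-holomorphic) modular form for $SL_2(\bbZ)$, applying $S\colon z\mapsto -1/z$, which fixes $i$, to the transformation law $E_2^\ast(-1/z)=z^2E_2^\ast(z)$ gives $E_2^\ast(i)=i^2E_2^\ast(i)=-E_2^\ast(i)$, hence $E_2^\ast(i)=0$. The discussion after Proposition \ref{prop:recurrence} then applies to $f=\theta_2$, $k=1/2$, $\Gamma=\Gamma(2)$, yielding $\partial_{1/2}^{(N)}\theta_2(z)|_{z=i}=F_N(i)$, where $F_N\in M_{1/2+2N}(\Gamma(2))$ is defined by the operator recurrence $F_{n+1}=\vartheta F_n-\frac{n(2n-1)}{288}E_4F_{n-1}$ (using $n+k-1=n-\tfrac12$), with $F_0=\theta_2$, $F_1=\vartheta_{1/2}\theta_2$.

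Next I would convert this operator recurrence into the stated polynomial recurrence by a change of variables inside $\bbC[\theta_2,\theta_4]$. Writing $u=\theta_2^4$, $v=\theta_4^4$ and introducing the weight-zero variable $t=v/u-1$, Lemma \ref{lem:mainlem5} gives $\vartheta u=\tfrac{uv}{3}+\tfrac{u^2}{6}$, $\vartheta v=-\tfrac{uv}{3}-\tfrac{v^2}{6}$, and hence (the $E_2$-terms cancelling) $Dt=-\tfrac12 u(t+1)(t+2)$; moreover $\vartheta\theta_2=\tfrac1{24}\theta_2\,u(2t+3)$, while the identity $E_4=\theta_2^8+\theta_2^4\theta_4^4+\theta_4^8$ (from the Jacobi relation $\theta_3^4=\theta_2^4+\theta_4^4$ and $E_4=\tfrac12(\theta_2^8+\theta_3^8+\theta_4^8)$) becomes $E_4=u^2(t^2+3t+3)$. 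An induction with the Leibniz rule for $\vartheta$ shows every $F_n$ lies in $\theta_2\,\bbC[u,v]$, so by homogeneity of weight one may write $F_n=24^{-n}\theta_2^{4n+1}f_n(t)$ with $\deg f_n\le n$. Substituting this ansatz, dividing by $\theta_2^{4n+5}$ and clearing the factor $24$ at each level, I expect the operator recurrence to collapse into exactly
\begin{align}
f_{n+1}(t)=(4n+1)(2t+3)f_n(t)-12(t+1)(t+2)f_n'(t)-2n(2n-1)(t^2+3t+3)f_{n-1}(t),\nonumber
\end{align}
with $f_0=1$ and $f_1=2t+3$ read off from $F_0=\theta_2$ and $F_1=\tfrac1{24}\theta_2^5(2t+3)$. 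Integer coefficients and integer initial data then force $f_n\in\bbZ[t]$.

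Finally I would evaluate at $z=i$. Because the transformation $S$ interchanges $\theta_2$ and $\theta_4$ and fixes $i$, we have $\theta_2(i)=\theta_4(i)$, so $t(i)=0$ and $\partial_{1/2}^{(N)}\theta_2(z)|_{z=i}=24^{-N}\theta_2(i)^{4N+1}f_N(0)$. Inserting this into Theorem \ref{thm:mainthm3} and then into the definition of $L_{E,k}$, the factorials and the powers of $2,3,\varpi$ all collapse, leaving $L_{E,k}=\bigl(\sqrt2\,\varpi\,\theta_2(i)^2/\Omega_E\bigr)^{2k-1}f_N(0)^2$. The remaining point is the special value: from $2\theta_2(i)^4=\theta_3(i)^4$, the classical evaluation $\theta_3(i)^2=\sqrt\varpi/\Gamma(3/4)^2$, and the reflection formula $\Gamma(1/4)\Gamma(3/4)=\varpi\sqrt2$, one gets $\theta_2(i)^2=\Gamma(1/4)^2/(2\sqrt2\,\varpi^{3/2})=\Omega_E/(\sqrt2\,\varpi)$, so the bracketed factor is $1$ and $L_{E,k}=f_N(0)^2$; this gives both that $L_{E,k}$ is the square of a rational integer and that $\sqrt{L_{E,k}}=|f_N(0)|$, while the even case $k=2N$ is immediate from the vanishing in Theorem \ref{thm:mainthm3}. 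The main obstacle is the middle step: verifying that the substitution $t=\theta_4^4/\theta_2^4-1$ together with the rescaling $F_n=24^{-n}\theta_2^{4n+1}f_n(t)$ transforms the operator recurrence into precisely the stated one, since this is where the exact shapes $(2t+3)$, $(t+1)(t+2)$, $(t^2+3t+3)$ and the numerical factor $24$ must all emerge correctly.
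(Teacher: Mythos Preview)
Your proposal is correct and follows essentially the same route as the paper: the same application of Proposition~\ref{prop:recurrence} to $f=\theta_2$ at $z_0=i$, the same rescaling $f_n=24^nF_n/\theta_2^{4n+1}$, the same weight-zero variable $t=(\theta_4^4-\theta_2^4)/\theta_2^4$, and the same identity $E_4=\theta_2^8+\theta_2^4\theta_4^4+\theta_4^8$. The only differences are expository: you supply explicit justifications for $E_2^\ast(i)=0$ and for $\theta_2(i)^2=\Omega_E/(\sqrt{2}\,\varpi)$ via $\theta_3(i)$ and the reflection formula, whereas the paper simply invokes complex multiplication for the latter; and the step you flag as the ``main obstacle'' is exactly the computation the paper carries out (and it does collapse as you expect).
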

\begin{proof}
By Proposition \ref{prop:recurrence} and Lemma \ref{lem:mainlem5}, we have $\partial_{1/2}^{(n)}\theta_2(z)|_{z=i}=F_n(i)$, where $F_n$ is the modular form that is defined by the recurrence formula
\begin{align}
F_{n+1}=\skakko{\dfrac{1}{12}\theta_2\theta_4^2+\dfrac{1}{24}\theta_2^5}\dfrac{\partial F_n}{\partial \theta_2}-\skakko{\dfrac{1}{12}\theta_2^4\theta_4+\dfrac{1}{24}\theta_4^5}\dfrac{\partial F_n}{\partial \theta_4}-\dfrac{n(n-1/2)}{144}E_4F_{n-1}. \label{eq:step1ofmainthm5}
\end{align}
We set $f_n={24}^nF_n/\theta_2^{4n+1}$, which has degree $0$. Then we can rewrite the recurrence formula \eqref{eq:step1ofmainthm5} as follows:
\begin{align}
f_{n+1}=(4n+1)\dfrac{\theta_2^4+2\theta_4^4}{\theta_2^4}f_n+\dfrac{\theta_2^4+2\theta_4^4}{\theta_2^4}\dfrac{\partial f_n}{\partial \theta_2}-\dfrac{2\theta_2^4\theta_4+\theta_4^5}{\theta_2^4}\dfrac{\partial f_n}{\partial \theta_4}-2n(2n-1)\dfrac{E_4}{\theta_2^8}f_{n-1}.\label{eq:step2ofmainthm5}
\end{align}
Moreover we set $t=(\theta_4^4-\theta_2^4)/\theta_2^4$ which satisfies $t(i)=0$. Note that $E_4=\theta_2^8+\theta_2^4\theta_4^4+\theta_4^8$. Then the recurrence formula \eqref{eq:step2ofmainthm5} transforms
\begin{align}
f_{n+1}(t)=(4n+1)(2t+3)f_n(t)-12(t+1)(t+2)f_n'(t)-2n(2n-1)(t^2+3t+3)f_n(t).
\end{align}
The initial condition is $f_0(t)=1,\, f_1(t)=2t+3$. By the complex multiplication theory, we have
\begin{align}
\absolute{\theta_2(i)}=2^{-1/4}\varpi^{-1/2}\Omega_{E}^{1/2}.
\end{align}
Therefore we obtain
\begin{align}
\absolute{\partial_{1/2}^{(N)}\theta_2(z)|_{z=i}}^2=2^{-4k+7/2}3^{-k+1}\varpi^{-2k+1}\Omega_{E}^{2k-1}\absolute{f_N(0)}^2.
\end{align}
\end{proof}

\subsection{The case $A_p$}

First we consider the case for $k=6N+1$. (The case for $k=6N+2$ is almost the same. ) We apply Proposition \ref{prop:recurrence} for $f=\eta, \Gamma=\Gamma(1)$. The graded ring $\oplus_{k\in \bbZ}M_k(\Gamma(1))$ is isomorphic to $\bbC[E_4, E_6]$ as $\bbC$-algebra. Since $E_4$ and $E_6$ are algebraically independent over $\bbC$, we sometimes regard $E_4$ and $E_6$ as indeterminates and $\bbC[E_4, E_6]$ as the polynomial ring in two variables over $\bbC$. We denote by $\frac{\partial}{\partial E_4}$ and $\frac{\partial}{\partial E_6}$ the derivative with respect to formal variables $E_4$ and $E_6$. We take a sufficiently small neighborhood $D$ of $\omega$ so that $E_6^{1/3}$ can be defined. (Note that $E_6(\omega)\neq 0$.) In the following, we restrict the domain of functions in $\bbC[E_4, E_6, E_6^{1/3}, E_6^{-1}, \eta]$ to $D$.

\begin{lem}
We have
\begin{align}
\vartheta E_4=-\dfrac{1}{3}E_6,\quad \vartheta E_6=-\dfrac{1}{2}E_4^2,\quad \vartheta\eta=0.
\end{align}
\end{lem}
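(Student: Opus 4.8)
The plan is to deduce all three identities from the classical Ramanujan differential equations for the Eisenstein series, together with the logarithmic derivative of $\eta$, exploiting in each case a cancellation of the (non-holomorphic) $E_2$-contribution that is built into the weight. Recall that in terms of the operator $D=q\,d/dq$ introduced above, the Ramanujan identities read
\begin{align*}
DE_2=\frac{E_2^2-E_4}{12},\qquad DE_4=\frac{E_2E_4-E_6}{3},\qquad DE_6=\frac{E_2E_6-E_4^2}{2},
\end{align*}
which may be quoted as standard or verified by comparing $q$-expansions. Here I understand $\vartheta$ to act on each form with its own weight, so $\vartheta E_4=\vartheta_4E_4$, $\vartheta E_6=\vartheta_6E_6$, and $\vartheta\eta=\vartheta_{1/2}\eta$.

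First I would treat $E_4$ and $E_6$. Since $E_4$ has weight $4$, the definition $\vartheta_k=D-\frac{k}{12}E_2$ gives
\begin{align*}
\vartheta E_4=DE_4-\frac{4}{12}E_2E_4=\frac{E_2E_4-E_6}{3}-\frac{E_2E_4}{3}=-\frac{1}{3}E_6,
\end{align*}
and since $E_6$ has weight $6$,
\begin{align*}
\vartheta E_6=DE_6-\frac{6}{12}E_2E_6=\frac{E_2E_6-E_4^2}{2}-\frac{E_2E_6}{2}=-\frac{1}{2}E_4^2.
\end{align*}
In both cases the weight is chosen precisely so that the $E_2E_4$ (resp. $E_2E_6$) term produced by $D$ is cancelled by the $-\frac{k}{12}E_2$ term, leaving a holomorphic modular form of weight $k+2$.

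For $\eta$, which has weight $1/2$, the key input is its logarithmic derivative. From the product expansion $\eta(z)=q^{1/24}\prod_{n\geq 1}(1-q^n)$ one obtains
\begin{align*}
D\log\eta=\frac{1}{24}-\sum_{n\geq 1}\frac{nq^n}{1-q^n}=\frac{1}{24}\Bigl(1-24\sum_{n\geq 1}\sigma_1(n)q^n\Bigr)=\frac{E_2}{24},
\end{align*}
using $\sum_{n\geq1}\frac{nq^n}{1-q^n}=\sum_{N\geq1}\sigma_1(N)q^N$; hence $D\eta=\frac{1}{24}E_2\eta$. Then
\begin{align*}
\vartheta\eta=D\eta-\frac{1/2}{12}E_2\eta=\frac{E_2\eta}{24}-\frac{E_2\eta}{24}=0,
\end{align*}
as claimed. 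I do not expect any genuine obstacle: the argument is entirely routine once the three Ramanujan identities and the logarithmic derivative of $\eta$ are in hand. The one point worth emphasizing is the systematic cancellation of the $E_2$ terms, which is exactly the mechanism by which $\vartheta$ sends holomorphic forms to holomorphic forms; the value $\vartheta\eta=0$ (equivalently $\partial_{1/2}\eta=\frac{1}{24}E_2^\ast\eta$) is what makes $\eta$ the natural seed for the recurrence in the $A_p$ case, just as $\theta_2$ was for $E_p$.
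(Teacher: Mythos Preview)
Your proof is correct. The paper's own proof is just a pointer: it says ``the same as Lemma~\ref{lem:mainlem5}'', meaning one argues that $\vartheta E_4\in M_6(\Gamma(1))=\bbC E_6$, $\vartheta E_6\in M_8(\Gamma(1))=\bbC E_4^2$, and $\vartheta\eta$ lands in a space that forces it to vanish, then pins down the constants by comparing leading $q$-coefficients. You instead quote the Ramanujan differential equations and the logarithmic derivative $D\log\eta=E_2/24$ directly and cancel the $E_2$-terms by hand. The two routes are closely related, since the Ramanujan identities themselves are usually proved by exactly the dimension argument the paper invokes; your version has the virtue of being fully explicit and self-contained, while the paper's version makes clearer why such identities must exist for any $f\in M_k(\Gamma)$ once the graded ring is known.
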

\begin{proof}
The proof is the same as Lemma \ref{lem:mainlem5}.
\end{proof}

By the above lemma, the Ramanujan-Serre operator $\vartheta$ acts on $\bbC[E_4, E_6]$ as
\begin{align}
\vartheta=-\dfrac{E_6}{3}\dfrac{\partial}{\partial E_4}-\dfrac{E_4^2}{2}\dfrac{\partial}{\partial E_6}.\label{eq:6N+1op}
\end{align}
The derivatives $\frac{\partial}{\partial E_4}$ and $\frac{\partial}{\partial E_6}$ on $\bbC[E_4, E_6]$ are uniquely extended on $\bbC[E_4, E_6, E_6^{-1}, E_6^{1/3}, \eta]$ satisfying the following:
\begin{align}
\dfrac{\partial}{\partial E_6} E_6^{-1}=-E_6^{-2}, \ \ \dfrac{\partial}{\partial E_6} E_6^{1/3}=\dfrac{1}{3}E_6^{-1}E_6^{1/3}.
\end{align}

Next we consider the case for $k=6N+4$. We apply Proposition \ref{prop:recurrence} for $f=\eta_3, \Gamma=\Gamma_0(3)$, where $\eta_3(z)=\eta(3z)^3$. It is known that the graded ring $\oplus_{k\in \bbZ}M_k(\Gamma_0(3))$ is isomorphic to $\bbC[C, \alpha, \beta]/(\alpha^2-C\beta)\cong \bbC[C, C^{-1}, \alpha]$ ({\it cf}. \cite{Suda}) as $\bbC$-algebra, where
\begin{align}
&C=\dfrac{1}{2}\skakko{3E_2(3z)-E_2(z)},\,\alpha=\dfrac{1}{240}\skakko{E_4(z)-E_4(3z)},\\
&\beta=\dfrac{1}{12}\mkakko{\dfrac{1}{504}\skakko{E_6(3z)-E_6(z)}-C\alpha}.
\end{align}
Since $C$ and $\alpha$ are algebraically independent over $\bbC$, we sometimes regard $C$ and $\alpha$ as indeterminates and $\bbC[C, \alpha]$ as the polynomial ring in two variables over $\bbC$. In the following, we consider the extension $\bbC[C, C^{-1}, \alpha, \eta_3]$ of $\bbC[C, \alpha]$.

\begin{lem}
We have
\begin{align}
\vartheta C=-\dfrac{1}{6}C^2+18\alpha, ~\vartheta \alpha=\dfrac{2}{3}C\alpha+9C^{-1}\alpha^2.
\end{align}
\end{lem}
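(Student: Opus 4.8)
The plan is to proceed exactly as in the proof of Lemma~\ref{lem:mainlem5} and the analogous lemma for $E_4,E_6$: use that the Ramanujan-Serre operator raises the weight by $2$ while preserving modularity, reduce the claim to a statement about finite-dimensional spaces of modular forms, and then pin down the coefficients by a short $q$-expansion comparison.

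First I would record the weights. Since $C\in M_2(\Gamma_0(3))$ and $\alpha\in M_4(\Gamma_0(3))$, and since $\vartheta$ sends $M_k(\Gamma_0(3))$ into $M_{k+2}(\Gamma_0(3))$ (as explained at the opening of Section~3, via $\vartheta_k=\partial_k-kE_2^\ast/12$), we know a priori that $\vartheta C\in M_4(\Gamma_0(3))$ and $\vartheta\alpha\in M_6(\Gamma_0(3))$. From the presentation $\oplus_k M_k(\Gamma_0(3))\cong\bbC[C,\alpha,\beta]/(\alpha^2-C\beta)$ together with the weights $(2,4,6)$ of $(C,\alpha,\beta)$, a basis of $M_4(\Gamma_0(3))$ is $\{C^2,\alpha\}$ and a basis of $M_6(\Gamma_0(3))$ is $\{C^3,C\alpha,\beta\}$. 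Hence there exist constants with
\[\vartheta C=aC^2+b\alpha,\qquad \vartheta\alpha=cC^3+dC\alpha+e\beta.\]

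Next I would determine these constants by matching the first few Fourier coefficients, using $\vartheta_k=D-\tfrac{k}{12}E_2$ and the expansions $C=1+12q+36q^2+\cdots$, $\alpha=q+9q^2+\cdots$, $E_2=1-24q-72q^2-\cdots$. Computing $\vartheta C=DC-\tfrac16 E_2C$ through order $q$ and matching against $aC^2+b\alpha$ forces $a=-\tfrac16$ from the constant term and then $b=18$ from the $q$-coefficient. Likewise $\vartheta\alpha=D\alpha-\tfrac13 E_2\alpha$ has vanishing constant term, giving $c=0$, and the $q$- and $q^2$-coefficients then give $d=\tfrac23$ and $e=9$. Rewriting $\beta=C^{-1}\alpha^2$ under the isomorphism to $\bbC[C,C^{-1},\alpha]$ yields precisely the asserted formulas.

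The only point requiring care, rather than a genuine obstacle, is that $\vartheta_k$ is written in terms of the quasimodular $E_2$, so neither $DC$ nor $E_2C$ is individually a modular form for $\Gamma_0(3)$. One must therefore invoke the a priori modularity of $\vartheta C$ and $\vartheta\alpha$ before concluding that agreement of finitely many Fourier coefficients forces the full identity; this is exactly the structural fact established in the preamble to Section~3. Once that is granted, the remaining verification is a finite and entirely routine computation.
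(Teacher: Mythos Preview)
Your proposal is correct and follows essentially the same approach as the paper: the paper's proof simply reads ``the proof is the same as Lemma~\ref{lem:mainlem5},'' which amounts to using that $\vartheta$ raises the weight by $2$ while preserving modularity for $\Gamma_0(3)$, together with the known structure of $M_4(\Gamma_0(3))$ and $M_6(\Gamma_0(3))$ in terms of $C,\alpha,\beta$. Your write-up is more explicit (giving the bases, the $q$-expansions, and the coefficient matching), but the underlying idea is identical.
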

\begin{proof}
The proof is the same as Lemma \ref{lem:mainlem5}.
\end{proof}
Similarly in the case for $k=6N+1$, the Ramanujan-Serre operator $\vartheta$ acts on $\bbC[C, C^{-1}, \alpha, \eta_3]$ as
\begin{align}
\vartheta=\skakko{-\dfrac{1}{6}C^2+18\alpha}\dfrac{\partial}{\partial C}+\skakko{\dfrac{2}{3}C\alpha+9C^{-1}\alpha^2}\dfrac{\partial }{\partial \alpha}.\label{eq:6N+3op}
\end{align}

\begin{thm}\label{thm:mainthm6}
We define the algebraic part of $L(\psi^{2k-1}, k)$ to be
\begin{align}
L_{A, k}=3\nu \skakko{\dfrac{2\varpi}{2\sqrt{3}\Omega_{A}^2}}^{k-1}\dfrac{(k-1)!}{\Omega_{A}}L(\psi'^{2k-1}, k)
\end{align}
where $\nu =2$ if $k\equiv 2\mod 6$, $\nu=1$ otherwise. Then $L_{A, k}$ is a square of a rational integer and
\begin{align}
\sqrt{L_{A, k}}=\begin{cases}
\absolute{x_{3N}(0)} & (k=6N+1),\\
\absolute{y_{3N}(0)} & (k=6N+2),\\
\absolute{z_{3N+1}(0)} & (k=6N+4),\\
0 & ({\rm otherwise}),
\end{cases}
\end{align}
where $x_{n}(t), y_{n}(t), z_{n}(t)\in \bbZ[t]$ are polynomials that is defined by the following recurrece formulas
\begin{align}
&x_{n+1}(t)=-2(1-8t^3)x_n'(t)-8nt^2x_n(t)-n(2n-1)tx_{n-1}(t)\\
&y_{n+1}(t)=-2(1-8t^3)y_n'(t)-8nt^2y_n(t)-n(2n+1)ty_{n-1}(t)\\
&z_{n+1}(t)=-(t-1)(9t-1)z_n'(t)+\mkakko{(6t-2)n+2}z_n(t)-2n(2n+1)tz_{n-1}(t).
\end{align}
The initial conditions are
\begin{align}
&x_0(t)=1,\, x_1(t)=0,\\
&y_0(t)=1,\, y_1(t)=0,\\
&z_0(t)=1/2,\, z_1(t)=1.
\end{align}
\end{thm}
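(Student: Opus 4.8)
The plan is to follow the template established in the proof of Theorem \ref{thm:mainthm5}, treating separately the three residue classes $k\equiv 1,2,4 \bmod 6$ that contribute in Theorem \ref{thm:mainthm4}. In each case the point $z=\omega$ is a CM point with $E_2^\ast(\omega)=0$, so Proposition \ref{prop:recurrence} applies and converts the special value into $\partial_{k_0}^{(n)}f(z)|_{z=\omega}=F_n(\omega)$, where $k_0\in\{1/2,3/2\}$ is the relevant half-integral weight, $f$ is the seed ($\eta$, $\eta^3$, or $\eta_3$ respectively), and $F_n$ is produced by the recurrence \eqref{eq:recurrence}. The task then becomes purely algebraic: write $F_n(\omega)$ as a power of a CM value of the seed times the value at $t=0$ of a one-variable polynomial obtained by normalizing $F_n$ to weight zero, and identify the recurrence satisfied by these polynomials.

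For $k=6N+1$ (seed $\eta$, weight $1/2$) and $k=6N+2$ (seed $\eta^3$, weight $3/2$) I would work inside $\bbC[E_4,E_6]$ with $\vartheta$ acting by \eqref{eq:6N+1op}. Since $\vartheta\eta=0$, one has $F_n=\eta\,g_n$ (resp. $F_n=\eta^3 g_n$) with $g_n$ an isobaric polynomial in $E_4,E_6$ of weight $2n$, and the $\eta$-factor is inert. Dividing by $E_6^{n/3}$ and introducing the weight-zero coordinate $t=\tfrac12 E_4 E_6^{-2/3}$ — which vanishes at $\omega$ because $E_4(\omega)=0$, and for which $1-8t^3=-1728\,\Delta/E_6^2$ — turns \eqref{eq:recurrence} into the stated recurrences once one sets $x_n=12^n g_n E_6^{-n/3}$ (and the analogous normalization for $y_n$). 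The only difference between the two cases is the coefficient $\tfrac{n(n+k_0-1)}{144}$, which becomes $n(2n-1)$ for $k_0=1/2$ and $n(2n+1)$ for $k_0=3/2$; a quick check gives $x_0=1$, $x_1=\vartheta\eta/\eta=0$, $x_2=-t$, matching Table \ref{tab:RFmainthm2}.

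The case $k=6N+4$ (seed $\eta_3=\eta(3z)^3$, weight $3/2$, on $\Gamma_0(3)$) is the genuinely different one, and I expect it to be the main obstacle. Here $\vartheta\eta_3\neq 0$; rather $\vartheta_{3/2}\eta_3=\tfrac{C}{4}\eta_3$, since $\tfrac{D\eta_3}{\eta_3}=\tfrac38 E_2(3z)$ and $3E_2(3z)-E_2(z)=2C$. Writing $F_n=\eta_3 h_n$, the recurrence for $h_n$ therefore acquires an extra term $\tfrac{C}{4}h_n$, and it is precisely this term that produces the inhomogeneous middle coefficient $[(6t-2)n+2]$ in the $z_n$-recurrence. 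Working in the localized ring $\bbC[C,C^{-1},\alpha]$ with $\vartheta$ given by \eqref{eq:6N+3op}, using $E_4=C^2+216\alpha$, normalizing by $z_n=\tfrac12\,8^n h_n C^{-n}$, and taking the coordinate $t=\tfrac19+24\,\alpha/C^2$ (so that $t(\omega)=0$, since $E_4(\omega)=0$ forces $\alpha(\omega)/C(\omega)^2=-\tfrac{1}{216}$) should reduce \eqref{eq:recurrence} to the claimed recurrence with $z_0=1/2$, $z_1=1$; one checks $z_2=3t$ both from the recurrence and from $h_2=\tfrac{1}{96}C^2+\tfrac94\alpha$.

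Finally, to pass from $F_n(\omega)$ to $L_{A,k}$ I would invoke complex multiplication theory (in the Chowla--Selberg form used for $\absolute{\theta_2(i)}$ in Theorem \ref{thm:mainthm5}) to evaluate $\absolute{\eta(\omega)}$, $\absolute{\eta_3(\omega)}$, and the generator values $E_6(\omega)$, $C(\omega)$ in terms of $\Omega_{A}=\Gamma(1/3)^3/(2\varpi\sqrt3)$, and then combine these with the formulas of Theorem \ref{thm:mainthm4} and the definition of $L_{A,k}$. The factor $\nu$ and the powers of $2$ and $3$ in the definition are arranged exactly so that the transcendental parts cancel and $\sqrt{L_{A,k}}$ becomes the integer $\absolute{x_{3N}(0)}$, $\absolute{y_{3N}(0)}$, or $\absolute{z_{3N+1}(0)}$; the squareness is immediate from the $\absolute{\,\cdot\,}^2$ in Theorem \ref{thm:mainthm4}, while the integrality — and, for the $z$-case, the fact that the half-integer initial data still yields an integer at $n=3N+1$ — is the bookkeeping point that needs the most care.
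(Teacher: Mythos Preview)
Your proposal is correct and follows essentially the same route as the paper: apply Proposition \ref{prop:recurrence} at $z=\omega$ with seeds $\eta$, $\eta^3$, $\eta_3$, normalize $F_n$ by $12^n/\eta E_6^{n/3}$ (resp.\ $2^{3n-1}/\eta_3 C^n$) and pass to the weight-zero coordinate $t=\tfrac12 E_4E_6^{-2/3}$ (resp.\ $t=(1+216C^{-2}\alpha)/9$), then insert the CM values of $\eta(\omega),E_6(\omega),\eta_3(\omega),C(\omega)$ into Theorem \ref{thm:mainthm4}. Your observation that $\vartheta_{3/2}\eta_3=\tfrac{C}{4}\eta_3$ is exactly what drives the extra $[(6t-2)n+2]$ term in the $z_n$-recurrence, and your sanity checks ($x_2=-t$, $z_2=3t$) are correct.
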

\begin{proof}
Since the proof for the case $k=6N+2$ is the same for $k=6N+1$, we prove for the case $k=6N+1, 6N+4$. 

First we proof for $k=6N+1$. By Proposition \ref{prop:recurrence} and \eqref{eq:6N+1op}, we have $\partial_{1/2}^{(n)}\eta(z)|_{z=\omega}=X_n(\omega)$, where $X_n$ is the modular form that is defined by the recurrence formula
\begin{align}
X_{n+1}=-\dfrac{E_6}{3}\dfrac{\partial X_n}{\partial E_4}-\dfrac{E_4^2}{2}\dfrac{\partial X_n}{\partial E_6}-\dfrac{n(n-1/2)}{144}E_4X_{n-1}.\label{eq:step1ofmainthm6}
\end{align}
We set $x_n=12^nX_n/\eta E_6^{n/3}$ and $t=E_4E_6^{-2/3}/2$ which satisfies $t(\omega)=0$. Then we can rewrite the recurrence formula \eqref{eq:step1ofmainthm6} as follows:
\begin{align}
x_{n+1}(t)=-2(1-8t^3)x_n'(t)-8nt^2x_n(t)-n(2n+1)tx_{n-1}(t).
\end{align}
The initial condition is $x_0(t)=1,\,x_1(t)=0$. By the complex multiplication theory, we have
\begin{align}
\eta(\omega)=\dfrac{3^{3/8}\Omega_{A}^{1/2}}{2^{1/2}\varpi^{1/2}},\, E_6(\omega)=\dfrac{3^6\Omega_{A}^6}{2^3\varpi^6}
\end{align}
Therefore we have
\begin{align}
\absolute{\partial_{1/2}^{(3N)}\eta(z)|_{z=\omega}}^2=\dfrac{\Omega_{A}^{2k-1}}{\varpi^{2k-1}}2^{-3k+2}3^{k-1/4}\absolute{x_{3N}(0)}^2.
\end{align}

Next we proof for $k=6N+4$. We set $\eta_3(z)=\eta(3z)^3$. We have $\partial_{3/2}^{(n)}\eta_3(z)|_{z=\omega}=Z_n(\omega)$, where $Z_n$ is the modular form that is defined by the recurrence formula
\begin{align}
Z_{n+1}=\skakko{-\dfrac{1}{6}C^2+18\alpha}\dfrac{\partial Z_n}{\partial C}+\skakko{\dfrac{2}{3}C\alpha+9C^{-1}\alpha^2}\dfrac{\partial Z_n}{\partial \alpha}-\dfrac{n(n+1/2)}{144}E_4Z_{n-1}.\label{eq:step2ofmainthm6}
\end{align}
We set $z_n=2^{3n-1}Z_n/\eta_3C^n, t=(1+216C^{-2}\alpha)/9$, which satisfies $t(\omega)=0$. Then we can rewrite the recurrence formula \eqref{eq:step2ofmainthm6} as follows:
\begin{align}
z_{n+1}(t)=-(t-1)(9t-1)z_n'(t)+\mkakko{(6t-2)n+2}z_n(t)-2n(2n+1)tz_{n-1}(t).
\end{align}
The initial condition is $z_0(t)=1/2,\, z_1(t)=1$. By the complex multiplication theory, we have
\begin{align}
\eta_3(\omega)=\dfrac{\Omega_{A}^{3/2}}{2^{3/2}3^{1/8}\varpi^{3/2}}, ~~C(\omega)=\dfrac{3\Omega_{A}^2}{\varpi^2}.
\end{align}
Therefore we obtain
\begin{align}
\absolute{\partial_{3/2}^{(3N+1)}\eta_3(z)|_{z=\omega}}=\dfrac{\Omega_{A}^{2k-1}}{\varpi^{2k-1}}2^{-3k+5}3^{k-9/4}\absolute{z_{3N+1}(0)}^2.
\end{align}
\end{proof}

\begin{ac}
This paper is based on his master thesis at Kyushu University. The author would like to thank his advisor Shinichi Kobayashi at Kyushu University for suggesting to him the topic in this paper and giving him advice and comments. He would also like to thank S. Yokoyama at Tokyo Metropolitan University for giving him helpful advice for calculators. He is grateful for F. Rodriguez-Villegas and D. Zagier. He has received significant inspiration from their paper. This research did not receive any specific grant from funding agencies in the public, commercial, or not-for-profit sectors.
\end{ac}

%FACULTY OF MATHEMATICS, KYUSHU UNIVERSITY, MOTOOKA 744, NISHI-KU FUKUOKA 819-0395, JAPAN, {\em E-mail address:} \textbf{nomotokeiichiro@gmail.com}


\begin{thebibliography}{999}

\bibitem{DasguptaVoight} S. Dasgupta, J. Voight, {\em Sylvester's problem and mock Heegner points}, (English summary) Proc. Amer. Math. Soc. 146 (2018), no. 8, 3257-3273. 11D25 (11G05 11G15 11G40)

\bibitem{deShalit} de Shalit, {\em Iwasawa theory of elliptic curves with complex multiplication. $p$-adic $L$ functions.} Perspectives in Mathematics, 3. Academic Press, Inc., Boston, MA, 1987. x+154 pp. ISBN: 0-12-210255-X 11G05 (11G15 11G16 11R23 14K22)

\bibitem{FarkasKra} H. M. Farkas, I. Kra, {\em Theta constants, Riemann surfaces and the modular group}, Graduate Studies in Mathematics, 37. American Mathematical Society, Providence, RI, 2001. xxiv+531 pp. ISBN: 0-8218-1392-7 11F27 (11F06 11P82 11P83 14H42 20H10 30F35)

\bibitem{Katz} N. M. Katz, {\em $p$-adic interpolation of real analytic Eisenstein series}, Ann. of Math. (2) 104 (1976), no. 3, 459-571. 14G10 (10D25)

\bibitem{Koblitz} N. Koblitz, {\em Introduction to elliptic curves and modular forms}, Graduate Texts in Mathematics, 97. Springer-Verlag, New York, 1984. viii+248 pp. ISBN: 0-387-96029-5 11G05 (11G40 14G25 14K07)

\bibitem{Kohler} G. K\"{o}hler, {\em Eta products and theta series identities}, Springer Monographs in Mathematics. Springer, Heidelberg, 2011. xxii+621 pp. ISBN: 978-3-642-16151-3 11F20 (11-02 11F11 11F27)

\bibitem{Lemmermeyer} F. Lemmermeyer, {\em Reciprocity laws. From Euler to Eisenstein}, Springer Monographs in Mathematics. Springer-Verlag, Berlin, 2000. xx+487 pp. ISBN: 3-540-66957-4 11A15 (11L05 11R04 11R18)

\bibitem{Loxton} J. H. Loxton, {\em On the determination of Gauss sums}, S\'{e}minaire Delange-Pisot--Poitou, 18e ann\'{e}e: 1976/77, Th\'{e}orie des nombres, Fasc. 2, Exp. No. 27, 12 pp., Secr\'{e}tariat Math., Paris, 1977. 10G05

\bibitem{Nekovar} J. Nekov\'{a}\v{r}, (2009). {\em On the parity of ranks of Selmer groups IV. With an appendix by Jean-Pierre Wintenberger.} Compositio Mathematica. 145. 1351 - 1359. 10.1112/S0010437X09003959. 

\bibitem{Satge} P. Satg\'{e}, {\em Groupes de Selmer et corps cubiques} (French), J. Number Theory 23 (1986), no. 3, 294-317. 11G05 (11D25)

\bibitem{Silverman} J. H. Silverman, {\em The arithmetic of elliptic curves}, Graduate Texts in Mathematics, 106. Springer-Verlag, New York, 1986. xii+400 pp. ISBN: 0-387-96203-4 11G05 (14Gxx 14K07 14K15)

\bibitem{Suda} T. Suda, H. Saito, {\em An explicit structure of the graded ring of modular forms of small level}, arXiv:1108.3933v3

\bibitem{Villegas} F. R. Villegas, D. Zagier, {\em Square roots of central values of Hecke $L$-series}, Advances in number theory (Kingston, ON, 1991), 81-99, Oxford Sci. Publ., Oxford Univ. Press, New York, 1993. 11F67

\bibitem{VillegasZagier} F. R. Villegas, D. Zagier, {\em Which primes are sums of two cubes?}, Number theory (Halifax, NS, 1994), 295-306, CMS Conf. Proc., 15, Amer. Math. Soc., Providence, RI, 1995. 11F67 (11D85 11G40)

\bibitem{Yin} H. Yin, {\em On the $8$ case of Sylvester Conjecture}, arXiv:1912.13338v1

\bibitem{Zagier123} J. H. Bruinier, G. van der Geer, G. Harder, D. Zagier, {\em The 1-2-3 of modular forms}. Lectures from the Summer School on Modular Forms and their Applications held in Nordfjordeid, June 2004. Edited by Kristian Ranestad. Universitext. Springer-Verlag, Berlin, 2008. x+266 pp. ISBN: 978-3-540-74117-6 11-06 (11F11)

\end{thebibliography}
\end{document}